\documentclass[a4paper]{amsart}
\usepackage[T1]{fontenc}
\usepackage[utf8]{inputenc}
\usepackage{lmodern}
\usepackage{enumerate}
\usepackage{amssymb,amsxtra}
\usepackage[all]{xy}
\usepackage{xcolor}
\usepackage{nicefrac,mathtools}
\usepackage{microtype}
\usepackage{amscd}
\usepackage{geometry}
\usepackage{mathbbol}
\usepackage{comment}

\usepackage{comment}

\usepackage[pdftitle={...},
 pdfauthor={Dami\'an Ferraro},
 pdfsubject={Mathematics}]{hyperref}
\usepackage{cite}

\numberwithin{equation}{section}
\theoremstyle{plain}
\newtheorem{theorem}[equation]{Theorem}
\newtheorem{lemma}[equation]{Lemma}
\newtheorem{proposition}[equation]{Proposition}
\newtheorem{corollary}[equation]{Corollary}
\theoremstyle{definition}
\newtheorem{definition}[equation]{Definition}

\theoremstyle{remark}
\newtheorem{remark}[equation]{Remark}
\newtheorem{example}[equation]{Example}

% MATH ENVIRONMENTS
%%%%%%%%%%%%%%%%%%%%%%%%%%%%%%%%%%%%%%%%%%%%%%%%%%%%%%%%%%%%%%%%%%%%%%%
\newtheorem{alphthm}{Theorem}			%letter numbering

           %letter numbering

\newcommand*{\congto}{\xrightarrow\sim}

\newcommand*{\C}{\mathbb C}

\newcommand{\car}{\curvearrowright}

\newcommand{\A}{\mathcal{A}}
\newcommand{\Ad}{\operatorname{Ad}}

\newcommand{\bB}{\mathbb{B}}
\newcommand{\bC}{\mathbb{C}}

\newcommand{\bk}{\mathbb{k}}

\newcommand{\bM}{\mathbb{M}}

\newcommand{\cB}{\mathcal{B}}

\newcommand{\cF}{\mathcal{F}}

\newcommand{\cM}{\mathcal{M}}
\newcommand{\cN}{\mathcal{N}}

 %worth noticing

\newcommand{\dual}[1]{\widehat{#1}}

\newcommand{\id}{\operatorname{id}}

\newcommand{\onto}{\twoheadrightarrow}

\newcommand{\rmu}{{r^{-1}}}
\newcommand{\rt}{\mathtt{rt}}
\newcommand{\smu}{{s^{-1}}}

\newcommand{\tmu}{{t^{-1}}}

\newcommand{\wstar}{{\operatorname{\rm w}^*}}
\newcommand{\wot}{\texttt{wot}}

\newcommand{\M}{\mathcal M} % multipliers
\newcommand{\sbe}{\subseteq}
\newcommand{\K}{\mathbb{K}} % compact operators

\newcommand*{\into}{\hookrightarrow}

\DeclareMathOperator{\Aut}{Aut}% automorphism group

\providecommand{\cspn}{\overline{\mathop{\rm span}}}

\providecommand{\red}{{\mathop{\rm r}}}
\providecommand{\spn}{{\mathop{\rm span}}}
\providecommand{\supp}{{\mathop{\rm supp}}}

\newcommand*{\Cst}{\mathrm C^*}% C*-algebra
\newcommand*{\Wst}{\mathrm W^*}% W*-algebra
\newcommand*{\cstar }{\texorpdfstring{\(\Cst\)\nobreakdash-\hspace{0pt}}{*-}}
\newcommand*{\Wstar}{\texorpdfstring{\(\Wst\)\nobreakdash-\hspace{0pt}}{*-}}

% absolute value
% norm
% ket-bra notation
% ket-bra notation
\DeclarePairedDelimiterX{\braket}[2]{\langle}{\rangle}{#1\,\delimsize\vert\,\mathopen{}#2}% inner product
%\DeclarePairedDelimiterX{\bbraket}[2]{\langle}{\rangle}{#1\,\delimsize\vert\,\mathopen{}#2}% inner product

\date{\today}

\geometry{top=30mm, bottom=25mm, inner=30mm,outer=25mm,asymmetric}

\title[W*-Amenability for Fell bundles over discrete groups]{W*-Amenability for Fell bundles over discrete groups}
\author{Alcides Buss}
\author{Damián Ferraro}

\date{\today}

\begin{document}

\begin{abstract}
We investigate amenability for \Wstar Fell bundles over a discrete group $G$, with a focus on its characterization via approximation properties and conditional expectations. Building on the notion of W$^*$-amenability, we construct an enlarged  \Wstar Fell bundle analogous to $\ell^\infty(G, M)$ for a group action $G$ on a von Neumann algebra $M$, and relate amenability to the existence of suitable conditional expectations at both the bundle and crossed-product levels. Our results unify and extend several approaches to amenability for noncommutative dynamical systems.%, including recent work of Bédos and Conti.

As applications of our methods, we prove that amenability of Fell bundles passes to restrictions to subgroups and that a Fell bundle over a group $G$ is amenable if and only if both its restriction to a normal subgroup $H \trianglelefteq G$ and the associated quotient Fell bundle over $G/H$ are amenable. This provides a powerful structural tool that extends classical permanence results for group amenability to the setting of \Wstar Fell bundles and also \cstar  algebraic Fell bundles. We also discuss how Fell bundles and their amenability interact with group coactions on $C^*$-algebras and von Neumann algebras.
\end{abstract}

\maketitle

%\tableofcontents

\section{Introduction}

The study of amenability in the context of operator algebras has led to a rich theory with deep 
connections to approximation properties, conditional expectations, and dynamical systems. 
For Fell bundles over discrete groups, the central notion is \emph{Exel's approximation property} (AP) 
\cite{Exel1997Amenability}. This property implies the weak containment property, i.e.\ the equality 
of the full and reduced cross-sectional C$^*$-algebras of the bundle, and it is strongly related to 
nuclearity: if the unit fiber is nuclear, then the nuclearity of a cross-sectional C$^*$-algebra is 
equivalent to the AP of the underlying Fell bundle. 
It remains a major open problem whether the AP is in general equivalent to weak containment.

A fundamental step towards understanding this property is the following observation, proved in \cite{abadie2021amenability}: a Fell bundle 
$\cB = \{B_t\}_{t \in G}$ has the AP if and only if its bidual W$^*$-Fell bundle 
$\cM = \{M_t\}_{t \in G}$, with $M_t = B_t''$, has the W$^*$-approximation property (W$^*$AP). 
Thus, the von Neumann bundle $\cM$ is not merely a technical enlargement of $\cB$, but rather the 
natural framework in which approximation can be studied. This provides the main motivation of the 
present work: we develop a systematic theory of amenability for W$^*$-Fell bundles, with the goal 
of understanding and extending the approximation property for C$^*$-Fell bundles.

For actions of discrete groups on von Neumann algebras, amenability was classically defined by 
Anantharaman-Delaroche \cite{ADaction1979} as the existence of a $G$-equivariant conditional 
expectation
\[
P \colon \ell^\infty(G,M) \to M,
\]
where the lifted action $\tilde{\gamma}$ of $G$ on $\ell^\infty(G,M)$ is given by 
$\tilde{\gamma}_r(f)(s) = \gamma_r(f(r^{-1}s))$. This notion admits several equivalent 
characterizations, including central and quasi-central approximation properties 
\cite{ADsystemes1987,BssEff_amenability}. 
The W$^*$AP for Fell bundles, introduced in \cite{abadie2021amenability}, extends this picture 
from group actions to the more general framework of W$^*$-Fell bundles.

Our first main contribution is a new characterization of amenability for W$^*$-Fell bundles in terms 
of conditional expectations. We construct a canonical enlargement of a given W$^*$-Fell bundle --
analogous to $\ell^\infty(G,M)$ in the case of group actions -- and show that the existence of a 
$G$-equivariant conditional expectation from this enlarged bundle to the original one is equivalent 
to W$^*$AP. This links approximation-theoretic and categorical views of amenability and connects to 
the theory of dual coactions.

\begin{alphthm}\label{thm: new characterization}
    For a W*-Fell bundle $\cM=\{M_t\}_{t\in G},$ the following statements are equivalent.
    \begin{enumerate}[(a)]
        \item\label{item: M WAD amenable} $\cM$ is W$^*$-amenable.
        \item\label{item: cond exp ell infty} There exists a conditional expectation $P\colon \ell^\infty(G,\cM)\to \cM.$
        \item\label{item: cond exp k} There exists a conditional expectation $Q\colon \bk_\wstar(\cM)\to W^*_\red(\cM).$
    \end{enumerate}
    Here $\ell^\infty(G,\cM):=\{\ell^\infty(G, M_t)\}_{t \in G}$ denotes the canonical enlargement of a $W^*$-Fell bundle, to be introduced in Section~\ref{sec:enlarged-Fell-bundles},   $W^*_\red(\cM)$ denotes the W*-crossed section algebra and $\bk_\wstar(\cM)$ the W*-algebra of kernels of $\cM$.
\end{alphthm}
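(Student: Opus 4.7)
The overall plan is to establish the cyclic implications (a) $\Rightarrow$ (b) $\Rightarrow$ (c) $\Rightarrow$ (a), mirroring the classical equivalences of Anantharaman-Delaroche for group actions on von Neumann algebras, but carried out in the Fell-bundle setting using the enlarged bundle $\ell^\infty(G,\cM)$ and the kernel algebra $\bk_\wstar(\cM)$.

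For (a) $\Rightarrow$ (b), I would construct $P$ as a weak-$^*$ cluster point of approximate means built from the W$^*$-approximation property. By W$^*$-amenability, there is a net $(\xi_i)$ of finitely supported sections in $\cM$ satisfying the defining inequalities of W$^*$AP. For each $\xi_i$, define a normal completely positive fiber-preserving map $P_i\colon\ell^\infty(G,\cM)\to\cM$ by averaging fiberwise against $\xi_i$, in the spirit of the classical formula $\Phi(f)=\sum_{s\in G}\xi(s)^*f(s)\xi(s)$, interpreted in the enlarged bundle. Each $P_i$ is a contractive bundle morphism whose restriction to the canonical copy of $\cM$ inside $\ell^\infty(G,\cM)$ converges pointwise weak-$^*$ to the identity; a weak-$^*$ cluster point $P$ of $(P_i)$ then yields a conditional expectation onto $\cM$.

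For (b) $\Rightarrow$ (c), pass from the bundle level to the crossed-section level. The crucial identification, to be established in the section on enlarged bundles, is that the reduced W$^*$-cross sectional algebra of $\ell^\infty(G,\cM)$ is naturally isomorphic to $\bk_\wstar(\cM)$, and that under this isomorphism the canonical inclusion $\cM\hookrightarrow\ell^\infty(G,\cM)$ induces the inclusion $W^*_\red(\cM)\hookrightarrow\bk_\wstar(\cM)$. Applying the reduced W$^*$-crossed-section functor to $P$ and composing with this isomorphism produces the desired conditional expectation $Q$. Conversely, for (c) $\Rightarrow$ (a), I would use $Q$ to manufacture an approximating net for W$^*$AP: $\bk_\wstar(\cM)$ is the weak-$^*$ closed span of finite-rank kernel operators $\theta_{\xi,\eta}$ with $\xi,\eta$ compactly supported sections of $\cM$, so that $Q$ provides a normal $W^*_\red(\cM)$-bimodule retraction whose values on such kernels supply, via pairing with faithful normal states and a Day/Namioka-type convexity argument, a net of positive-type kernels approximating the identity of $W^*_\red(\cM)$ in the weak-$^*$ topology, which is precisely what W$^*$AP requires.

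The main obstacle I anticipate is the identification used in (b) $\Rightarrow$ (c): verifying that $\bk_\wstar(\cM)$ really is (canonically isomorphic to) the reduced W$^*$-cross sectional algebra of $\ell^\infty(G,\cM)$ demands a careful fiberwise analysis of the $\ell^\infty$-structure together with its compatibility with the W$^*$-kernel product and the regular representation. A second subtle point is the convexity step in (c) $\Rightarrow$ (a): converting the mere existence of $Q$ into an explicit bounded net of finitely supported approximating sections of positive type is the step where the concrete form of W$^*$AP from \cite{abadie2021amenability} must be matched precisely, and where one typically has to pass from states to completely positive means via a weak-$^*$-to-norm convexity argument.
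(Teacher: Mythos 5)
Your cyclic order is the reverse of the paper's: the paper proves \eqref{item: cond exp ell infty}$\Rightarrow$\eqref{item: M WAD amenable} (via Proposition~\ref{prop: ell infty M is amenable} and the permanence results of \cite{abadie2021amenability}), then \eqref{item: M WAD amenable}$\Rightarrow$\eqref{item: cond exp k} (Theorem~\ref{thm: Mh amenble cond exp to fixed point algebra} with $H=G$ together with Theorem~\ref{thm: W subalgebra}), and finally \eqref{item: cond exp k}$\Rightarrow$\eqref{item: cond exp ell infty} by compressing $Q$ with the matrix-entry maps $E_t(T)=T_{t,e}$ through the isomorphism of Theorem~\ref{thm: iso algebra of kernels}. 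You correctly anticipate the key identification $W^*_\red(\ell^\infty(G,\cM))\cong \bk_\wstar(\cM)$, but you use it in the opposite (and problematic) direction.

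The genuine gap is your step \eqref{item: cond exp ell infty}$\Rightarrow$\eqref{item: cond exp k}: there is no ``reduced W$^*$-crossed-section functor'' that you can apply to $P$. The bundle conditional expectation $P\colon \ell^\infty(G,\cM)\to\cM$ is only a fiberwise, Exel-type expectation and is in general \emph{not} normal (the paper explicitly adopts \cite[Definition 21.19]{ExlibroAMS} without $\wstar$-continuity); hence the natural candidate $\sum_t f_t\lambda_t\mapsto \sum_t P(f_t)\lambda_t$, even if one verifies its complete positivity on the algebraic span, cannot be extended from that $\wstar$-dense subalgebra to all of $W^*_\red(\ell^\infty(G,\cM))\cong\bk_\wstar(\cM)$ without $\wstar$-continuity, and even for a normal fiberwise expectation the passage to reduced cross-sectional algebras requires a genuine argument rather than functoriality. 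The paper avoids this entirely by never deriving $Q$ from $P$: it builds $Q$ directly from amenability, averaging over $G$ at the level of $\bk_\wstar^\pi(\cM)\sbe\bB(X\otimes\ell^2(G))$ with the net witnessing amenability of the enveloping central action and taking a point-$\wstar$ cluster point of ccp maps (where compactness is available), then invoking Tomiyama. A secondary issue is your \eqref{item: cond exp k}$\Rightarrow$\eqref{item: M WAD amenable} sketch: you call $Q$ a \emph{normal} bimodule retraction, which is unwarranted, and the conversion of the mere existence of $Q$ into a W$^*$AP net is precisely the hard Anantharaman-Delaroche-type argument that the paper does not redo; it instead closes the cycle through \eqref{item: cond exp ell infty}$\Rightarrow$\eqref{item: M WAD amenable}, using that $\ell^\infty(G,\cM)$ is W$^*$-amenable and that amenability passes along a conditional expectation onto the subbundle $\cM$ by \cite[Theorem 6.7 \& Corollary 7.9]{abadie2021amenability}. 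Your \eqref{item: M WAD amenable}$\Rightarrow$\eqref{item: cond exp ell infty} averaging construction is plausible in outline (modulo choosing the net with central values to get bimodularity of the cluster point), but as a whole the proposal does not close the equivalences.
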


In the second part of the paper, we establish structural permanence results. 
We show that amenability is preserved under restriction to subgroups and can be detected via 
extensions by normal subgroups. These results generalize classical permanence theorems for amenable 
groups to the setting of Fell bundles, and apply equally in the C$^*$ and W$^*$ settings. In this direction, our main result can be summarized as follows:

\begin{alphthm}
Let $\cM = \{M_t\}_{t \in G}$ be a W$^*$-Fell bundle or a C$^*$-algebraic Fell bundle over a discrete group $G$, 
and let $H \leq G$ be a subgroup. If $\cM$ is amenable (i.e. C$^*$-amenable in the von Neumann sense or has the approximation property in the C$^*$-case), then the restricted bundle $\cM|_H$ is also amenable.

Moreover, if $H \trianglelefteq G$ is a normal subgroup, then $\cM$ is amenable if and only if both its 
restriction $\cM|_H$ and the associated quotient Fell bundle over $G/H$ are amenable.
\end{alphthm}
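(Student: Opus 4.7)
The strategy is to prove the W*-statement using Theorem~A and deduce the C*-statement via the bidual: by \cite{abadie2021amenability}, a C*-Fell bundle $\cB$ has AP if and only if its bidual $\cB'' = \{B_t''\}_{t \in G}$ has W*AP, and taking biduals is compatible (fiberwise) with restriction to a subgroup and with the quotient bundle construction. I therefore focus on the W*-case.

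For the restriction to a subgroup $H \leq G$, I would start from the conditional expectation $P \colon \ell^\infty(G,\cM) \to \cM$ provided by Theorem~A(b). Fixing a transversal $T$ for the right cosets $H \backslash G$, each $g \in G$ factors uniquely as $g = ht$ with $h \in H$ and $t \in T$, yielding an $H$-equivariant $*$-embedding of W*-Fell bundles
\[
\pi \colon \ell^\infty(H,\cM|_H) \hookrightarrow \ell^\infty(G,\cM)|_H, \qquad \pi(f)(ht) := f(h),
\]
whose image is the subbundle of sections that are constant along each right coset $Ht$. The composite $P \circ \pi$ takes values in $\cM|_H$; it fixes $\cM|_H$ because constant sections remain constant under $\pi$ and $P$ is the identity on $\cM$, and it is $H$-equivariant because $\pi$ is $H$-equivariant and $P$ is $G$-equivariant. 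Theorem~A then gives W*-amenability of $\cM|_H$.

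For the normal subgroup statement, the ``only if'' direction uses the argument above for $\cM|_H$, while for the quotient $\cN := \cM/H$ I would average: the pullback $q^* \colon \ell^\infty(G/H, \cN) \to \ell^\infty(G,\cM)$ along $q \colon G \to G/H$ composed with $P$, followed by the natural $H$-invariant projection $\cM \to \cN$, yields a conditional expectation $\ell^\infty(G/H, \cN) \to \cN$, so $\cN$ is W*-amenable. For the converse, given $P_H \colon \ell^\infty(H,\cM|_H) \to \cM|_H$ and $P_{G/H} \colon \ell^\infty(G/H,\cN) \to \cN$, I would fix a set-theoretic section $s \colon G/H \to G$ of $q$ to identify
\[
\ell^\infty(G,\cM) \cong \ell^\infty\bigl(G/H,\; \ell^\infty(H,\cM)\bigr)
\]
as W*-Fell bundles; applying $P_H$ pointwise over $G/H$ lands us in $\ell^\infty(G/H, \cN)$, and a further application of $P_{G/H}$ produces a conditional expectation $\ell^\infty(G,\cM) \to \cN \hookrightarrow \cM$, from which W*-amenability of $\cM$ follows by Theorem~A.

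The principal obstacle I anticipate is verifying that the iterated conditional expectation in the converse direction is genuinely $G$-equivariant, not merely $H$- and $G/H$-equivariant separately. The section $s$ is not a group homomorphism, and the 2-cocycle measuring its failure must be absorbed by the interplay between the inner $P_H$ and the outer $P_{G/H}$. This mirrors the classical argument that extensions of amenable groups are amenable, and I would expect to adapt that proof to the Fell bundle setting using the W*-structure to take weak-$*$ limits where necessary.
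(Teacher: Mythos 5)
Your first half --- restriction to an arbitrary subgroup --- is essentially correct and is a genuinely different route from the paper's: the paper deduces amenability of $\cM_H$ by passing to the enveloping action of the central partial action and invoking Anantharaman--Delaroche's permanence results, whereas you compose the bundle conditional expectation $P\colon \ell^\infty(G,\cM)\to\cM$ of Theorem~A with the embedding $\ell^\infty(H,\cM_H)\hookrightarrow \ell^\infty(G,\cM)|_H$, $\pi(f)(ht):=f(h)$. One can check that $\pi$ is multiplicative and involutive for the twisted operations of the enlarged bundles, and that $P\circ\pi$ then satisfies Exel's conditions for a conditional expectation of Fell bundles onto $\cM_H$; note that no separate ``$H$-equivariance'' of $P$ needs to be (or can be) invoked, since Theorem~A only asserts a bundle conditional expectation and the module conditions already encode the equivariance, and note also that the image of $\pi$ consists of sections depending only on the $H$-coordinate of $g=ht$, not of sections constant on the right cosets $Ht$. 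Modulo these small corrections this gives a shorter proof of the restriction statement than the paper's.

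The normal-subgroup part, however, has a genuine gap: your argument treats the quotient bundle as an honest quotient of $\cM$, with a ``natural $H$-invariant projection $\cM\to\cN$'' and a pullback $q^*\colon \ell^\infty(G/H,\cN)\to\ell^\infty(G,\cM)$, and neither map exists. In the paper the fibre of $\dot\cM$ over a coset $u$ is $\dot M_u=W^*_\red(\cM_u)$, the $\wstar$-closed span of \emph{all} fibres $M_t$ with $t\in u$ inside $W^*_\red(\cM)$; the fibres of $\cM$ embed into those of $\dot\cM$ rather than projecting onto them, and an element of $\dot M_u$ does not lie in any single fibre $M_t$, so $q^*$ has no meaning. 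Likewise the identification $\ell^\infty(G,\cM)\cong\ell^\infty(G/H,\ell^\infty(H,\cM))$ ``as W*-Fell bundles'' is not available (over which group, with which graded product?), and the cocycle/equivariance issue you flag at the end is not a loose end to be absorbed later --- it is precisely the substance of this half of the theorem. The paper resolves it with machinery your sketch does not supply: the isomorphism $\bk_\wstar(\dot\cM)\cong\bk_\wstar(\cM)^H$ carrying $W^*_\red(\dot\cM)$ onto $W^*_\red(\cM)$ (Theorem~\ref{thm: identificacion of kernels of WMH}), the averaging construction giving a conditional expectation $\bk_\wstar(\cM)\to\bk_\wstar(\cM)^H$ whenever $\cM_H$ is amenable (Theorem~\ref{thm: Mh amenble cond exp to fixed point algebra}), and the kernel-algebra characterization (c) of Theorem~A; restricting and composing these expectations yields both implications in Theorem~\ref{thm: M amena iff N and Mh amenable}. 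Finally, in the C*-case your claim that taking biduals is ``compatible with the quotient bundle construction'' is unjustified: the paper does not prove $(\dot\cB)''\cong\dot{(\cB'')}$, but only constructs a surjective fibrewise-normal morphism $(\dot\cB)''\to\dot\cM$ and transfers W*AP nets along it in one direction, while the forward C*-implication is imported from the BCAP-based argument of \cite{BussFerraro25discrete}; without such a bridge your reduction of the C*-statement to the W*-statement does not go through for the quotient bundle.
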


Finally, to illustrate the scope of our methods, we revisit a concrete class of examples: 
Green twisted actions. We show that amenability of such actions is equivalent to W$^*$-amenability of 
the induced action on the center of $B''$. This connects our general theory with a classical 
construction in the theory of crossed products, and demonstrates how the W$^*$-bundle approach 
naturally captures and extends known results.

\medskip

\subsection*{Outline of the paper}
In Section~\ref{sec: WFell bundles} we recall the basic theory of C$^*$- and W$^*$-Fell bundles and give 
concrete descriptions of the associated algebras of kernels and of the reduced cross-sectional 
W$^*$-algebra. In Section~\ref{sec:enlarged-Fell-bundles}, for a given W$^*$-Fell bundle $\cM$ over $G$, 
we construct the enlarged bundle $\ell^\infty(G,\cM)$ and prove that its reduced cross-sectional 
W$^*$-algebra coincides with the W$^*$-algebra of kernels $\bk_\wstar(\cM)$. 
In the last part of Section~\ref{sec: WFell bundles} we use a normal subgroup $H \trianglelefteq G$ 
and construct a W$^*$-Fell bundle $\dot\cM$ over $\dot G = G/H$ such that $\bk_\wstar(\dot\cM)$ is 
isomorphic to the algebra of fixed points of $\bk_\wstar(\cM)$ under the restriction to $H$ of the 
canonical action of $G$. Moreover, our isomorphism maps $W^*_\red(\dot\cM)$ onto $W^*_\red(\cM)$. 
Section~\ref{sec: amenability and fixed points} develops the conditional expectation characterization 
of W$^*$AP and proves the permanence results. Section~\ref{sec:coactions} discusses the role of 
coactions and relates our constructions to duality theory. Finally, in Section~\ref{sec:applications}, 
we adapt our results to the C$^*$-setting and conclude with the case study of Green twisted actions.

\subsection*{Notations and conventions}
For a $C^*$-algebra $B$, we write $\M(B)$ for its multiplier $C^*$-algebra, and $U\M(B)$ for the group of its unitaries. The center of a $C^*$-algebra $B$ will be denoted by $Z(B)$.

All our inner products are linear in the second variable, even those of Hilbert spaces.

For a Hilbert space or a Hilbert $B$-module $X$, we shall write $\bB(X)$ for the $C^*$-algebra of all adjointable $B$-linear operators on $X$. Notice that $\M(B)\cong \bB(B)$ if $B$ is viewed as a Hilbert $B$-module.

\section{W*-Fell bundles over discrete groups}\label{sec: WFell bundles}

We adopt from \cite{abadie2021amenability} the definition of W*-Fell bundles over discrete groups and all the related notions and constructions (W$^*$-amenability, W*AP, W*-crossed product, W*-algebra of kernels, etc).

\begin{remark}
In \cite{abadie2021amenability} these notions were denoted by adding the prefix 
``AD'' (for ``Anantharaman--Delaroche'') -- for instance, 
\emph{AD-amenability} or \emph{W*AD-amenability}, etc. -- to emphasize their connection with the approximation properties introduced by Anantharaman--Delaroche. In the present paper, we simplify the notation by omitting this prefix, writing simply ``\cstar amenable'' and ``\Wstar amenable'', etc., while keeping the same underlying definitions.
\end{remark}

Fix a  W*-Fell bundle $\cM = \{M_t\}_{t \in G}$ and a unital representation $\pi\colon \cM\to \bB(X)$ on a Hilbert space $X$ such that $M_t\to \bB(X),$ $m\mapsto \pi(m)$, is isometric and $\wstar-$continuous for all $t\in G$. The Krein-Smulian Theorem \cite[Corollary 12.6]{Conway_CourseFA} implies that  $\pi(M_t)$ is $\wstar$-closed. Moreover, $M_t \to \pi(M_t)$, $m \mapsto \pi(m)$, is a $\wstar$-homeomorphism.
This is the case because given Banach spaces $U$ and $V$ and a linear map $T\colon U^*\to V^*$, the restriction of $T$ to the closed unit ball $U^*_1$ is $\wstar$-continuous if and only if $T=S^*$ for a linear and bounded $S\colon V\to U$. In particular, $T$ is $\wstar$-continuous if and only if $T|_{U^*_1}$ is $\wstar$-continuous.

The left and right regular representations of $G$ will be denoted $\lambda\colon G\to \bB(\ell^2(G))$ and $\rho\colon G\to \bB(\ell^2(G)),$ respectively.
If we need to specify the group, we write $\lambda^G$ and $\rho^G$.
For any Hilbert space $X$, we identify $X\otimes \ell^2(G)$ with $\ell^2(G,X)$ in the canonical way.

\subsection{A concrete description of the W*-algebra of kernels}

A \emph{kernel} of $\cM$ is a function $k\colon G \times G \to \cM$ such that $k(r, s) \in M_{r\smu}$ for all $r, s \in G$.
The kernels of finite support form a normed $*$-algebra $\bk_c(\cM)$ with the operations
\begin{align*}
    h * k(r, s) & := \sum_{t \in G} h(r, t) k(t, s), &
    h^*(r, s) & := h(s, r)^*, &
    \|h\|_2 & := \Big( \sum_{r, s \in G} \|h(r, s)\|^2\Big)^{1/2}.
\end{align*}

The set $\cF$ of finite subsets of $G$ is directed with the usual inclusion order: $F_1\leq F_2\Leftrightarrow F_1\sbe F_2$.
For every $F = \{t_1, \ldots, t_n\}\in \cF$, the set $\bk_F(\cM)$ of kernels with support contained in $F \times F$ is $*$-isomorphic to the $C^*$-algebra $\bM_{\textbf{t}}(\cM)$ of \cite[Lemma 2.8]{AbFrrEquivalence}, where $\textbf{t} = (t_1^{-1}, \ldots, t_n^{-1})$. 
Thus, $\bk_F(\cM)$ is a $C^*$-algebra and it happens that its $C^*$-norm is equivalent to $\|\ \|_2$.
Notice that $\bk_c(\cM)$ is the union of the $C^*$-algebras $\{ \bk_F(\cM) \colon F\in \cF\}$ and that the $C^*$-algebra of kernels $\bk(\cM)$ is the inductive limit of $\{\bk_F(\cM)\}_{F\in \cF}$, or the completion of $\bk_c(\cM)$ with respect to its unique $C^*$-norm.

As before, we fix a unital, isometric $\wstar$-continuous representation $\pi\colon \M\to \bB(X)$. Each $T \in \bB(X\otimes \ell^2(G))$ has a matrix representation $(T_{r,s})_{r, s \in G}$ with respect to the canonical basis $(\delta_s)_{s\in G}\sbe\ell^2(G)$; the entries $T_{r,s}\in \bB(X)$ are determined by the identity $\langle T_{r, s} x, y \rangle = \langle T(x\otimes \delta_s), y\otimes \delta_r \rangle$. The concrete W*-algebra of kernels (associated to $\pi$) is
\begin{equation*}
    \bk_\wstar^\pi(\cM):=\left\{T \in \bB(X\otimes \ell^2(G))\colon T_{r, s} \in \pi(M_{r \smu})\ \forall \  r, s \in G\right\}.
\end{equation*}

For every $F \in \cF$, we define $\bk_{\wstar F}^\pi(\cM)$ as the set of operators $T \in \bk_\wstar^\pi(\cM)$ such that $T_{r, s} = 0$ if $(r, s) \notin F \times F$. Then $\bk_{\wstar F}^\pi(\cM)$ is a von Neumann algebra that is $C^*$-isomorphic to $\bk_F(\cM).$ The norm-closure of $\cup\{\bk_{\wstar F}^\pi(\cM)\colon F\in \cF\}$, denoted $\bk^\pi(\cM)$,  is $C^*$-isomorphic to the $C^*$-algebra of kernels $\bk(\cM)$ and $\bk_\wstar^\pi(\cM)$ is the von Neumann algebra generated by $\cup\{\bk_{\wstar F}^\pi(\cM)\colon F\in \cF\}$, that is, 
$$\bk_\wstar^\pi(\cM)=\overline{\bk^\pi(\cM)}^{\wot},$$
where $\wot$ denotes is the weak operator topology.
The natural action $\beta$ of $G$ on $\bk(\cM)$ is determined by $$\beta_t(k)(r,s)=k(rt,st),\quad k\in \bk_c(\cM)$$ 
and is implemented as $\text{Ad}(1\otimes \rho)$.
Thus, both $\bk_\wstar^\pi(\cM)$ and $\bk^\pi(\cM)$ are $\text{Ad}(1\otimes \rho)$-invariant.

The following remark will be used many times without an explicit mention, specially in the case where $X$ is a Hilbert space, $Y=X\otimes \ell^2(G)$ and $Y_0=\spn\{x\otimes\delta_r\colon r\in G,\ x\in X\}=C_c(G,X).$

\begin{remark}
Given a Hilbert space $Y$, a dense subspace $Y_0\sbe Y$ and a sesquilinear function $\sigma\colon Y_0\times Y_0\to \bC$;  there exist an operator $T\in \bB(Y)$ such that $\sigma(x,y)=\langle Tx,y\rangle$ for $x,y\in Y_0$ if and only if there exists a constant $C>0$ such that $|\sigma(x,y)|\leq C\|x\|\|y\|$ for all $x,y\in Y_0$. 
\end{remark}

Recall from \cite{abadie2021amenability} that the right $M_e$-Hilbert module $\ell^2_\wstar(\cM)$ is formed by those cross sections $f \colon G \to \cM$ such that $\{\sum_{t \in F} f(t)^* f(t)\}_{F\in \cF}$ is bounded.
The inner product of $f,g\in \ell^2_\wstar(\cM)$ is the $\wstar$-limit $\langle f,g\rangle\equiv \sum_{t\in G}f(t)^*g(t)$ of $\{\sum_{t\in F}f(t)^*g(t)\}_{F\in \cF}$ and the action of $m\in M_e$ on $f$ gives the element $fm\in \ell^2_\wstar(\cM)$ with $fm(t)=f(t)m$.

The module $\ell^2_\wstar(\cM)$ can be faithfully represented as a \wot-closed ternary ring of operators (see \cite{Zl83}) via $\pi^2\colon \ell^2_\wstar(\cM)\to \bB(X,X\otimes\ell^2(G))$ with $(\pi^2(f)x)(t) := \pi(f(t))x$.
Hence, $\bB(\ell^2_\wstar(\cM))$ is a \Wstar algebra and a bounded net $\{T_i\}_{i\in I}\subset \bB(\ell^2_\wstar(\cM))$ $\wstar$-converges to $T\in \bB(\ell^2_\wstar(\cM))$ if and only if for all $f,g\in\ell^2_\wstar(\cM)$, $\wstar\lim_i \langle T_if,g\rangle  = \langle Tf,g\rangle$; which is equivalent to say that $\wot\lim_i \pi(\langle T_if,g\rangle )=\pi(\langle Tf,g\rangle )$.

Notice that $\ell^2(\cM)$, the closure in $\ell^2_\wstar(\cM)$ of the finitely supported cross sections $C_c(\cM)$, is a $\wstar$-dense submodule of $\ell^2_\wstar(\cM)$ and recall that the canonical *-homomorphism $\Phi\colon \bk(\cM)\to \bB(\ell^2(\cM))$ \cite{Ab03} is given by $\Phi(k)f(r)=\sum_{s\in G}k(r,s)f(r)$ ($k$ and $f$ with finite support).
The representation $\pi^2$ induces $\Phi^\pi\colon \bk(\cM)\to \bB(X\otimes \ell^2(G))$, $\Phi^\pi(k)\xi(r)=\sum_{s\in G}\pi(k(r,s))\xi(r)$, in the sense that $\Phi^\pi(k)\pi^2(f)=\pi^2(\Phi(k)f)$.
In fact, $\Phi^\pi|_{\bk_c(\cM)}$ is faithful and for all $F\in \cF$, $\Phi^\pi(\bk_F(\cM))=\bk_F^\pi(\cM)$.
Hence, $\Phi(\bk(\cM))= \bk^\pi(\cM)$ and $\Phi$ is an isometry because each restriction $\Phi^\pi|_{\bk_F(\cM)}$ is an isometry.
Since $\pi^2(\ell^2(\cM))$ is $\wot$-dense in $\pi^2(\ell^2_\wstar(\cM))$, $\Phi^\pi(\bk(\cM))\pi^2(\ell^2_\wstar(\cM))\subset \pi^2(\ell^2_\wstar(\cM))$ and it follows that $\Phi$ can be extended to a *-homomorphism $\bk(\cM)\to \bB(\ell^2_\wstar(\cM))$ that we also denote $\Phi$ and call the canonical $*$-homomorphism from $\bk(\cM)$ to $\bB(\ell^2_\wstar(\cM))$.

\begin{lemma}\label{lem: the homomorphism Psi}
    There exists a normal $*$-homomorphism $\Psi \colon \bk^\pi_\wstar(\cM) \to \bB(\ell^2_\wstar(\cM))$ such that $\Psi\circ \Phi^\pi = \Phi$.
    In other words, if we identify $\bk(\cM) = \bk^\pi(\cM)$, then $\Psi$ is the $\wstar$-continuous extension of $\Phi$.
\end{lemma}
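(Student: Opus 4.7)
The plan is to use the TRO realization $\pi^2 \colon \ell^2_\wstar(\cM) \hookrightarrow \bB(X, X\otimes \ell^2(G))$ and define $\Psi(T)$ as the ``left multiplication'' of $T\in \bk^\pi_\wstar(\cM)$ on this TRO, via the equation
\[
\pi^2(\Psi(T) f) \;=\; T \, \pi^2(f), \qquad f\in \ell^2_\wstar(\cM).
\]
Faithfulness of $\pi^2$ will then uniquely determine $\Psi(T)$ as a map $\ell^2_\wstar(\cM) \to \ell^2_\wstar(\cM)$, provided the right-hand side really lies in $\pi^2(\ell^2_\wstar(\cM))$.

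Establishing this stability statement is the central technical step: for every $T \in \bk^\pi_\wstar(\cM)$ and $f \in \ell^2_\wstar(\cM)$, $T\pi^2(f) \in \pi^2(\ell^2_\wstar(\cM))$. To prove it I would apply Kaplansky density to the WOT-dense \cstar subalgebra $\bk^\pi(\cM) = \Phi^\pi(\bk(\cM))$ of $\bk^\pi_\wstar(\cM)$, obtaining a bounded net $\{k_i\} \subset \bk(\cM)$ with $\Phi^\pi(k_i) \to T$ in SOT on $\bB(X\otimes\ell^2(G))$. Using the intertwining $\Phi^\pi(k_i) \pi^2(f) = \pi^2(\Phi(k_i) f)$ recorded in the paragraph preceding the lemma, the net $\Phi^\pi(k_i)\pi^2(f)$ lies in $\pi^2(\ell^2_\wstar(\cM))$ and converges in SOT, and hence in WOT, to $T\pi^2(f)$. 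WOT-closedness of $\pi^2(\ell^2_\wstar(\cM))$ then forces the limit inside.

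The remaining verifications are routine translations through the TRO formalism. Right $M_e$-linearity of $\Psi(T)$ follows from $\pi^2(fm)=\pi^2(f)\pi(m)$; adjointability, from the same construction applied to $T^*$ combined with the TRO identity $\pi(\langle f,g\rangle)=\pi^2(f)^*\pi^2(g)$, which gives $\Psi(T)^*=\Psi(T^*)$; multiplicativity, from $T_1T_2\pi^2(f)=T_1\pi^2(\Psi(T_2)f)=\pi^2(\Psi(T_1)\Psi(T_2)f)$ and faithfulness of $\pi^2$; and the bound $\|\Psi(T)\|\le\|T\|$, from $\|T\pi^2(f)\|\le\|T\|\|f\|$. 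For normality I would invoke the characterization of \wstar{}-convergence on $\bB(\ell^2_\wstar(\cM))$ recalled above: a bounded net $S_\alpha\to S$ in \wstar{} iff $\pi(\langle S_\alpha f,g\rangle)\to\pi(\langle Sf,g\rangle)$ in WOT. Since $\pi(\langle\Psi(T)f,g\rangle)=\pi^2(g)^*\,T\,\pi^2(f)$, WOT-convergence of bounded nets in $\bk^\pi_\wstar(\cM)$ immediately yields \wstar{}-convergence of their images, so $\Psi$ is \wstar{}-continuous on bounded sets and therefore normal. The relation $\Psi\circ\Phi^\pi=\Phi$ is then immediate from the defining equation. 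The main obstacle is really the stability step itself, specifically ensuring that Kaplansky produces a net converging in SOT (not merely \wstar{}) at $\pi^2(f)$ for every $f\in \ell^2_\wstar(\cM)$, so that WOT-closedness of the TRO can be used to absorb the limit; once this is settled, everything else is mechanical bookkeeping.
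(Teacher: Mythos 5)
Your proposal is correct and follows essentially the same route as the paper: define $\Psi(T)f:=(\pi^2)^{-1}(T\pi^2(f))$ using the invariance $\bk^\pi_\wstar(\cM)\pi^2(\ell^2_\wstar(\cM))\subseteq\pi^2(\ell^2_\wstar(\cM))$ (which the paper asserts and you justify via Kaplansky density and \wot{}-closedness of the TRO), then verify adjointability, multiplicativity and normality through the identity $\pi(\langle f,g\rangle)=\pi^2(f)^*\pi^2(g)$ and the bounded-net criterion for \wstar{}-convergence in $\bB(\ell^2_\wstar(\cM))$. The only blemish is a harmless order slip: with the paper's conventions $\pi(\langle\Psi(T)f,g\rangle)=\pi^2(f)^*T^*\pi^2(g)$, while $\pi^2(g)^*T\pi^2(f)=\pi(\langle g,\Psi(T)f\rangle)$, which does not affect the normality argument.
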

\begin{proof}
 Since $\bk^\pi_\wstar(\cM)\pi^2(\ell^2_\wstar(\cM))\subset \pi^2(\ell^2_\wstar(\cM))$; it suffices to define $\Psi$ by $\Psi(T)f:=(\pi^2)^{-1}( T\pi^2(f) )$ ($T\in \bk^\pi_\wstar(\cM)$ and $f\in \ell^2_\wstar(\cM)$).
 Formally, for each $T\in \bk^\pi_\wstar(\cM)$, this only defines a function $\ell^2_\wstar(\cM)\to \ell^2_\wstar(\cM)$.
 It is an adjointable operator because, for all $f,g\in \ell^2_\wstar(\cM)$, we can use the fact that $\pi(\langle f,g\rangle) = \pi^2(f)^*\pi^2(g)$ to get $\pi( \langle \Psi(T)f,g\rangle ) = \pi^2(\Psi(T)f)^*\pi^2(g) = (T\pi^2(f))^*\pi^2(g) = \pi^2(f)^*T^*\pi^2(g) =\pi(\langle f,\Psi(T)^*g\rangle)$ and it follows that $\Psi(T)$ is adjointable with adjoint $\Psi(T^*)$.
 The rest of the proof is left to the reader.
\end{proof}

\begin{theorem}\label{thm: concrete description of algebra of kernels}
There exists an isomorphism of W*-algebras $\kappa^\pi\colon \bk_\wstar(\cM)\to \bk_\wstar^\pi(\cM)$ that transforms the canonical action $\beta^\wstar$ of $G$ on $\bk_\wstar(\cM)$ \cite[Section 5.4]{abadie2021amenability} into $\Ad(1\otimes \rho)$.
\end{theorem}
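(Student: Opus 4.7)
The plan is to extend by W*-continuity the identification $\bk(\cM)\cong\bk^\pi(\cM)\subset\bk_\wstar^\pi(\cM)$ that has already been established in the paragraphs preceding the statement. Recall from the excerpt that $\Phi^\pi\colon\bk(\cM)\to\bB(X\otimes\ell^2(G))$ is a faithful $*$-homomorphism with image $\bk^\pi(\cM)$, and that each restriction $\Phi^\pi|_{\bk_F(\cM)}$ is an isometric $C^*$-isomorphism onto $\bk_{\wstar F}^\pi(\cM)$. The first task is to produce a normal $*$-homomorphism $\kappa^\pi\colon \bk_\wstar(\cM)\to\bk^\pi_\wstar(\cM)$ extending $\Phi^\pi$. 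Since $\bk_\wstar(\cM)$ is constructed in \cite[Section 5.4]{abadie2021amenability} with $\bk(\cM)$ as a W*-dense C*-subalgebra, this extension is obtained either through the universal property of $\bk_\wstar(\cM)$ (which I would invoke directly) or by passing through the bidual $\bk(\cM)^{**}$ and quotienting by the annihilator.

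Surjectivity of $\kappa^\pi$ is immediate: its image is a W*-closed subalgebra of $\bk^\pi_\wstar(\cM)$ (normality plus Krein--Smulian, exactly as already used in the excerpt), and it contains $\bk^\pi(\cM)$, which by definition generates $\bk^\pi_\wstar(\cM)$ in the weak operator topology. So I would simply observe $\overline{\kappa^\pi(\bk(\cM))}^{\wot}=\overline{\bk^\pi(\cM)}^{\wot}=\bk^\pi_\wstar(\cM)$.

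The main obstacle I anticipate is injectivity of $\kappa^\pi$. Here I would use Lemma~\ref{lem: the homomorphism Psi}: the composition $\Psi\circ\kappa^\pi$ is a normal $*$-homomorphism $\bk_\wstar(\cM)\to\bB(\ell^2_\wstar(\cM))$ which restricts to $\Psi\circ\Phi^\pi=\Phi$ on the W*-dense subalgebra $\bk(\cM)$. Since $\bk_\wstar(\cM)$ is by construction the W*-closure of $\Phi(\bk(\cM))$ inside $\bB(\ell^2_\wstar(\cM))$ (this is the intrinsic definition of \cite{abadie2021amenability}), the uniqueness of normal extensions forces $\Psi\circ\kappa^\pi$ to coincide with the canonical (faithful) inclusion $\bk_\wstar(\cM)\hookrightarrow\bB(\ell^2_\wstar(\cM))$, which gives $\ker\kappa^\pi=0$. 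As a back-up strategy, if the definition of $\bk_\wstar(\cM)$ in \cite{abadie2021amenability} is more abstract, then $\ker\kappa^\pi$ is a W*-ideal of $\bk_\wstar(\cM)$ of the form $p\bk_\wstar(\cM)$ for a central projection $p$; the faithfulness of $\Phi^\pi$ on $\bk(\cM)$ gives $p\cdot\bk(\cM)=0$, and W*-density forces $p=0$.

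For equivariance, on the dense subalgebra $\bk_c(\cM)$ a direct matrix computation, using that the $(r,s)$-entry of $(1\otimes\rho_t)T(1\otimes\rho_t)^*$ is $T_{rt,st}$, shows
\[
\Phi^\pi(\beta_t(k))=(1\otimes\rho_t)\Phi^\pi(k)(1\otimes\rho_t)^*\qquad(k\in\bk_c(\cM)).
\]
Both $\beta^\wstar_t$ and $\Ad(1\otimes\rho_t)$ are W*-continuous ($\beta^\wstar$ being defined as the normal extension of $\beta$ and $\Ad(1\otimes\rho_t)$ being spatial), so the intertwining relation $\kappa^\pi\circ\beta^\wstar_t=\Ad(1\otimes\rho_t)\circ\kappa^\pi$ propagates from $\bk_c(\cM)$ to all of $\bk_\wstar(\cM)$ by uniqueness of normal extensions. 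This completes the plan.
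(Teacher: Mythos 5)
Your equivariance and surjectivity steps are fine, but the two places where you anticipate "routine" arguments are exactly where the real content lies, and both arguments as written have genuine gaps. First, the existence of a normal $*$-homomorphism $\kappa^\pi\colon\bk_\wstar(\cM)\to\bk_\wstar^\pi(\cM)$ extending $\Phi^\pi$ is not automatic: $\bk_\wstar(\cM)$ carries no universal property producing normal extensions of arbitrary representations of its $\wstar$-dense \cstar subalgebra $\bk(\cM)$, and the bidual route only works if the kernel of the canonical quotient $\bk(\cM)^{**}\onto\bk_\wstar(\cM)$ is annihilated by $(\Phi^\pi)^{**}$ -- that is, precisely if $\Phi^\pi$ is continuous for the $\wstar$-topology that $\bk(\cM)$ inherits from $\bk_\wstar(\cM)$, which is the nontrivial claim you would have to prove. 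The paper avoids this issue by building the isomorphism in the opposite direction: the explicit formula $\kappa^\pi(T)(t)=\Psi\bigl((1\otimes\rho_\tmu)T(1\otimes\rho_t)\bigr)$, with $\Psi$ the normal homomorphism of Lemma~\ref{lem: the homomorphism Psi}, is manifestly normal on the concrete algebra $\bk_\wstar^\pi(\cM)$ and extends the map $\kappa$ by which $\bk_\wstar(\cM)$ is defined, so existence, normality and surjectivity (onto $\bk_\wstar(\cM)$) come for free, and only faithfulness remains.

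Second, your injectivity argument rests on a false premise: in \cite{abadie2021amenability}, $\bk_\wstar(\cM)$ is \emph{not} the $\wstar$-closure of $\Phi(\bk(\cM))$ inside $\bB(\ell^2_\wstar(\cM))$; it is the $\wstar$-closure, inside $\ell^\infty(G,\bB(\ell^2_\wstar(\cM)))$, of the image of $f\mapsto\bigl(r\mapsto\Phi(\beta_r^{-1}(f))\bigr)$. The twist by $\beta$ is essential, because the single representation $\Phi$ need not be faithful (already on $\bk(\cM)$ it can fail badly, e.g.\ when fibers over $t\neq e$ are small), so uniqueness of normal extensions only identifies $\Psi\circ\kappa^\pi$ with an evaluation-type map that has no reason to be injective. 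The back-up argument is also fallacious: faithfulness of $\Phi^\pi$ on $\bk(\cM)$ gives $\ker\kappa^\pi\cap\bk(\cM)=0$, not $p\cdot\bk(\cM)=0$ (for $x\in\bk(\cM)$ the element $px$ need not lie in $\bk(\cM)$), and a nonzero $\wstar$-closed ideal of a von Neumann algebra can meet a $\wstar$-dense \cstar subalgebra trivially -- compare $C[0,1]\sbe L^\infty[0,1]$ with the ideal cut out by the characteristic function of a nowhere dense set of positive measure. Faithfulness is exactly what the paper proves by hand: it shows that $\kappa^\pi(T)=0$ forces every matrix entry $T_{r,s}=\pi(k_T(r,s))$ to vanish, via the explicit computation with vectors $x\otimes_\pi a\delta_r$. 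Without an argument of this kind, your proposal does not establish the theorem.
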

\begin{proof}
    By construction, $\bk_\wstar(\cM)$ is the $\wstar-$closure of the image of $\kappa\colon \bk(\cM)\to \ell^\infty(G,\bB(\ell^2_\wstar(\cM))),$ $\kappa(f)(r)=\Psi(\beta_t^{-1}(f)).$
    The unique $\wstar$-continuous extension of $\kappa$ is $\kappa^\pi\colon \bk_\wstar^\pi(\cM)\to \ell^\infty(G,\bB(\ell^2_\wstar(\cM))),$ $\kappa^\pi(T)(t)= \Psi ((1\otimes \rho_\tmu ) T(1\otimes \rho_t)).$
    It is then clear that $\kappa^\pi(\bk^\pi_\wstar(\cM))=\bk_\wstar(\cM)$ and, since $\beta^\wstar$ is implemented by the translation on $\ell^\infty(G,\bB(\ell^2_\wstar(\cM)))$, $\kappa^\pi$ is $\Ad(1\otimes \rho)-\beta^\wstar-$equivariant.

    All that remains is to prove that $\kappa^\pi$ is faithful; assume that $\kappa^\pi(T)=0.$
    From the proof of Lemma \ref{lem: the homomorphism Psi} we get that for all $r,s,t\in G;$ $a\in M_r;$ $b\in M_s$ and $x,y\in X$ 
    \begin{align*}
        0 &= \langle \pi( \langle \Psi(( 1\otimes \rho_\tmu )T(1\otimes \rho_t) )) a\delta_r,b\delta_s\rangle) x,y\rangle 
        =\langle x\otimes_\pi b\delta_s,  ( 1\otimes \rho_\tmu )T(1\otimes \rho_t) (y\otimes_\pi a\delta_r)\rangle   \\
         & = \langle \pi(b)x \otimes  \delta_s,  ( 1\otimes \rho_\tmu )T(1\otimes \rho_t) (\pi(a)y \otimes \delta_r)\rangle
          = \langle \pi(b)x \otimes \delta_{s\tmu},  T( \delta_{r\tmu}\otimes  \pi(a)y)\rangle \\
          & = \langle \pi(b)x,T_{s\tmu,r\tmu}\pi(a)y\rangle
            =\langle x,\pi(b^* k_T(s\tmu,r\tmu)a)y\rangle.
    \end{align*}
    If we take $r=e$ and $a$ as the unit of $M_e$, then we get that $b^* k(s\tmu,\tmu)=0$ for all $s,t\in G$ and $b\in M_s.$
    Hence, $b^*k_T(r,s)=0$ for all $r,s\in G$ and $b\in M_{r\smu}.$
    In particular, we can take $b=k_T(r,s)$ and it follows that $k_T=0.$
    Consequently, all the entries of $(T_{r,s})_{r,s\in G} = (\pi(k_T(r,s)))_{r,s\in G}$  are zero and this implies $T=0.$
\end{proof}

\subsection{The reduced cross-sectional W*-algebra}

By \cite[Theorem 5.7]{abadie2021amenability}, the W*-crossed product $W^*_\red(\cM)$ is isomorphic to
\begin{equation}\label{equ: Wred of M}
    W^*_{\red\pi }(\cM)=\cspn^\wstar \{ \pi(m)\otimes \lambda_t\colon t\in G,\ m\in M_t \}\sbe \bB(X\otimes \ell^2(G))
\end{equation}
and $\pi\lambda\colon \cM\to \bB(X\otimes \ell^2(G))$ defined by $\pi\lambda(m):=\pi(m)\otimes \lambda_t,$ for $m\in M_t$, is a *-representation (isometric and $\wstar$ continuous on each fiber).
Notice that $W^*_{\red\pi}(\cM)\sbe \bk^\pi_\wstar(\cM)$ because, for $m \in M_t$, $\pi\lambda(m)$ is the operator corresponding to the kernel $k_m\colon G\times G\to \cM,$ $(r,s)\mapsto m\delta_t(r\smu).$

In matrix form, for all $t\in G$ and $T\in \bB(X\otimes \ell^2(G)),$ $\Ad(1\otimes \rho_t)(  (T_{r,s})_{r,s\in G}) =(T_{rt,st})_{r,s\in G}$.
Since $k_m(rt,st)=k_m(r,s),$ $W^*_{\red\pi}(\cM)$ is contained in the W*-algebra formed by the fixed points of $\Ad(1\otimes\rho)|_{\bk_\wstar^\pi(\cM)},$ $\bk_\wstar^\pi(\cM)^G.$
To prove the reverse inclusion take $T\in \bk_\wstar^\pi(\cM)^G$ and $\xi_1,\ldots,\xi_n\in C_c(G,X).$
Let $k_T$ be the kernel of $\cM$ corresponding to $T$ ($\forall r,s\in G$  $T_{r,s}=\pi(k(r,s))$, then $k(rt,st)=k(r,s)=k(r\smu,e)$). Set $F:=\cup_{i=1}^n \supp(\xi_i)$ and $S:=\sum_{t\in FF^{-1}} \pi(k(t,e))\otimes \lambda_t\in W^*_{\red\pi}(\cM).$

The set $F_{i,j}:=\{(r,t)\colon \langle [\pi\lambda( k(t,e) )\xi_i](r),\xi_j(r)\rangle\neq 0\}$ is finite because, if $(r,t)\in F_{i,j},$ then $0\neq \langle [\pi\lambda(k(t,e))\xi_i](r),\xi_j(r)\rangle = \langle \pi(k(t,e))\xi_i(\tmu r),\xi_j(r)\rangle$ and it follows that $r,\tmu r\in F$; which yields $(r,t)\in F\times (FF^{-1}).$
For all $i,j=1,\ldots, n$, all the sums below are finite and can be transformed one into another by changing variables in $G\times G$ or by using that $F_{i,j}\sbe G\times (FF^{-1})$: 
\begin{align*}
   \langle T\xi_i,\xi_j\rangle 
   & = \sum_{r,s\in G} \langle \pi(k_T(r,s))\xi_i(s),\xi_j(r)\rangle=\sum_{r,s\in G} \langle \pi(k_T(r\smu,e))\xi_i(s),\xi_j(r)\rangle \\
   & =\sum_{r,s\in G} \langle \pi(k_T(r\smu,e))(\lambda_{r\smu}\xi_i)(r),\xi_j(r)\rangle 
   = \sum_{r,s\in G} \langle [\pi\lambda(k_T(r\smu,e))\xi_i](r),\xi_j(r)\rangle  \\
   & = \sum_{r,t\in G} \langle [\pi\lambda(k_T(t,e))\xi_i](r),\xi_j(r)\rangle  
   = \sum_{t\in FF^{-1}} \langle \pi\lambda(k_T(t,e))\xi_i,\xi_j\rangle = \langle S\xi_i,\xi_j\rangle.
\end{align*}
Recalling that $W^*_{\red\pi}(\cM)$ is $\wot$-closed, we get that $T\in W^*_{\red\pi}(\cM)$.

Adding Theorem \ref{thm: concrete description of algebra of kernels} to the preceding discussion we get

\begin{theorem}\label{thm: W subalgebra}
    The algebra $W^*_{\red\pi }(\cM)$ is the set formed by those $T\in \bk_\wstar^\pi(\cM)$ such that for all $r\in G,$ $s\mapsto T_{rs,s}$ is constant.
    In other words, $W^*_{\red}(\cM)$ is the set $\bk_\wstar(\cM)^G$ of fixed points of $\beta^\wstar$.
\end{theorem}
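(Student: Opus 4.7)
The plan is to prove the two inclusions making up the stated equality, using the matrix description developed in the preceding pages. Since $\Ad(1\otimes \rho_t)$ sends a matrix $(T_{r,s})_{r,s\in G}$ to $(T_{rt,st})_{r,s\in G}$, the fixed-point condition $\Ad(1\otimes\rho_t)T = T$ for every $t\in G$ is equivalent to $T_{rt,st} = T_{r,s}$ for all $r,s,t$, which (taking $t=s^{-1}$) is the same as requiring that $s\mapsto T_{rs,s}$ be constant in $s$ for each fixed $r$. So the matrix formulation and the fixed-point formulation of the statement coincide, and by Theorem~\ref{thm: concrete description of algebra of kernels} it suffices to work inside $\bk_\wstar^\pi(\cM)$.

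For the easier inclusion $W^*_{\red\pi}(\cM)\subseteq \bk_\wstar^\pi(\cM)^G$, I would compute the kernel $k_m$ attached to a generator $\pi\lambda(m)=\pi(m)\otimes \lambda_t$ with $m\in M_t$: a direct calculation yields $k_m(r,s) = m\,\delta_t(rs^{-1})$, which depends only on $rs^{-1}$, so $\pi\lambda(m)$ already satisfies the matrix invariance. Since the fixed-point condition defines a $\wstar$-closed subspace, it survives taking $\cspn^\wstar$.

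The substance is in the reverse inclusion $\bk_\wstar^\pi(\cM)^G \subseteq W^*_{\red\pi}(\cM)$. Given an invariant $T$ with kernel $k_T$, the invariance reads $k_T(r,s) = k_T(rs^{-1},e)$, so $T$ is determined by the single row function $t\mapsto k_T(t,e)\in M_{t^{-1}}$. The strategy is to approximate $T$ in the weak operator topology by finite sums $S_F := \sum_{t\in FF^{-1}} \pi\lambda(k_T(t,e)) \in W^*_{\red\pi}(\cM)$ as $F$ ranges over finite subsets of $G$. To verify this, I would test against arbitrary $\xi_1,\dots,\xi_n\in C_c(G,X)$, set $F := \bigcup_i \supp(\xi_i)$, and show that $\langle T\xi_i,\xi_j\rangle = \langle S_F\xi_i,\xi_j\rangle$ for all $i,j$. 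The key computation expands $\langle T\xi_i,\xi_j\rangle = \sum_{r,s}\langle \pi(k_T(r,s))\xi_i(s),\xi_j(r)\rangle$, replaces $k_T(r,s)$ by $k_T(rs^{-1},e)$, and after the change of variable $t = rs^{-1}$ recognizes the expression as $\sum_t \langle \pi\lambda(k_T(t,e))\xi_i,\xi_j\rangle$. The main obstacle -- though manageable -- is controlling the supports so that only finitely many $t$ contribute: one must check that the index set of nonzero summands is contained in $F \times (FF^{-1})$, guaranteeing that $S_F$ is a well-defined finite sum and that the equality of inner products actually holds.

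Once this matching on a set of test vectors whose linear span is $\wot$-dense in $X\otimes \ell^2(G)$ is established, and given that $W^*_{\red\pi}(\cM)$ is $\wot$-closed by construction \eqref{equ: Wred of M}, we conclude that $T$ lies in $W^*_{\red\pi}(\cM)$. Combining both inclusions with the isomorphism $\kappa^\pi$ from Theorem~\ref{thm: concrete description of algebra of kernels}, which intertwines $\Ad(1\otimes \rho)$ and $\beta^\wstar$, yields the identification $W^*_\red(\cM) = \bk_\wstar(\cM)^G$.
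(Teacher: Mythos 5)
Your proposal matches the paper's own proof essentially step for step: the same kernel computation $k_m(r,s)=m\,\delta_t(rs^{-1})$ for the generators, the same finite sum $S=\sum_{t\in FF^{-1}}\pi\lambda(k_T(t,e))$ with the same support-control argument bounding the nonzero terms inside $F\times(FF^{-1})$, the same appeal to $\wot$-closedness of $W^*_{\red\pi}(\cM)$, and the same passage through Theorem~\ref{thm: concrete description of algebra of kernels} to obtain the abstract statement. No substantive differences to report.
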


\begin{remark}\label{rmk:subbundles}
    We say that $\cN=\{N_t\}_{t\in G}$ is a \Wstar Fell subbundle of $\cM$ if each fiber $N_t$ is a $\wstar$-closed subspace of the respective fiber $M_t$ and $\cN$ is closed under the $*$-algebraic operations of $\cM$.
    Thinking of the elements of $\bk_\wstar(\cN)$ and $\bk_\wstar(\cM)$ as matrices, we get that $\bk_\wstar(\cN)$ is a von Neumann subalgebra of $\bk_\wstar(\cM)$ and this inclusion gives $W^*_\red(\cN)\subset W^*_\red(\cM)$.
    Moreover, if $\cN\cM\cN\subset \cN$, in which case we say $\cN$ is heredetary in $\cM$, then $\bk_\wstar(\cN)$ and $W^*_\red(\cN)$ are hereditary in $\bk_\wstar(\cM)$ and $W^*_\red(\cM)$, respectively.
\end{remark}

\begin{corollary}\label{cor:kernel and cross sectional algebra for Wstar semidirect product bundle}
    If $\cM$ is the semidirect product bundle a \Wstar dynamical system $(M,G,\gamma)$, then $\bk_\wstar(\cM)$ is \Wstar isomorphic to the von Neumann tensor product $\bB(\ell^2(G))\bar\otimes M$.
    Moreover, the isomorphism can be established in such a way that $W^*_\red(\cM)$ is mapped onto Von Neuman tensor product $M\bar\rtimes_\gamma G$.
\end{corollary}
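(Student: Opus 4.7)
The plan is to absorb the twist $\gamma$ into the bundle's multiplication via a change of matrix coordinates that converts $\bk_c(\cM)$ into ordinary matrices over $M$, and then to promote this identification to the $W^*$-level using the concrete description of Theorem~\ref{thm: concrete description of algebra of kernels} together with the $W^*$-Hilbert module $\ell^2_\wstar(\cM)$. I would realize $\cM$ concretely as $M_t = M\times\{t\}$ with multiplication $(m,t)(n,s) = (m\gamma_t(n), ts)$ and involution $(m,t)^* = (\gamma_{t^{-1}}(m^*), t^{-1})$, and for $k \in \bk_c(\cM)$ with $k(r,s) = (\tilde k(r,s), rs^{-1})$ define
\[
\Theta(k)(r,s) := \gamma_{r^{-1}}(\tilde k(r,s)) \in M.
\]
A direct computation yields $\Theta(h*k) = \Theta(h)\Theta(k)$ (ordinary matrix product) and $\Theta(k^*) = \Theta(k)^*$, so $\Theta$ is a $*$-isomorphism of $\bk_c(\cM)$ onto the $*$-algebra $\bM_c(G;M)$ of finitely supported $G\times G$-matrices over $M$. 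By the uniqueness of the $C^*$-norm on such matrix $*$-algebras, $\Theta$ extends to a $C^*$-isomorphism $\bk(\cM) \cong \bK(\ell^2(G)) \otimes M$.

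To lift $\Theta$ to the $W^*$-level, I pass through the Hilbert module $\ell^2_\wstar(\cM)$. Writing a section $f \in \ell^2_\wstar(\cM)$ as $f(t) = (\tilde f(t), t)$ and setting $\hat f(t) := \gamma_{t^{-1}}(\tilde f(t))$, one computes $\langle f, g\rangle = \sum_t \hat f(t)^*\hat g(t)$ and $\widehat{fm} = \hat f \cdot m$, so $f \mapsto \hat f$ is a unitary equivalence of right $M$-Hilbert modules $\ell^2_\wstar(\cM) \cong \ell^2_\wstar(G, M)$ and hence induces a $W^*$-isomorphism $\bB(\ell^2_\wstar(\cM))\cong \bB(\ell^2(G))\bar\otimes M$. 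Tracking the canonical $\Psi\colon \bk(\cM)\to \bB(\ell^2_\wstar(\cM))$ through this identification, it restricts on $\bk_c(\cM)$ to exactly $\Theta$ acting as matrix multiplication on $\ell^2_\wstar(G, M)$; taking $\wstar$-closures and invoking Theorem~\ref{thm: concrete description of algebra of kernels} then yields the claimed $W^*$-isomorphism $\bk_\wstar(\cM)\cong \bB(\ell^2(G))\bar\otimes M$.

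For the subalgebra statement I invoke Theorem~\ref{thm: W subalgebra}, which identifies $W^*_\red(\cM)$ with the fixed subalgebra $\bk_\wstar(\cM)^G$ under the canonical action $\beta^\wstar$. A short calculation from $\beta_t(k)(r,s) = k(rt, st)$ shows the transferred action on $\bB(\ell^2(G))\bar\otimes M$ is $(\beta_t a)_{r,s} = \gamma_t(a_{rt, st})$, whose fixed points satisfy $a_{r,s} = \gamma_{s^{-1}}(a_{rs^{-1}, e})$; hence each fixed element is encoded by the single function $u \mapsto a_{u,e} \in M$. Matching this parametrization with the standard realization of $M\bar\rtimes_\gamma G$ inside $\bB(\ell^2(G))\bar\otimes M$ via the regular covariant representation of $(M, G, \gamma)$ identifies the fixed subalgebra precisely with $M\bar\rtimes_\gamma G$. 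The main technical hurdle is the $W^*$-extension in the second step: although the $*$-isomorphism on $\bk_c(\cM)$ is immediate, promoting it to the $W^*$-completions requires carefully matching the Hilbert-module identification above with the concrete description of $\bk_\wstar(\cM)$ supplied by Theorem~\ref{thm: concrete description of algebra of kernels}.
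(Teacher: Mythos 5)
Your proposal is correct in substance, and its core computation is exactly the paper's untwisting: your $\Theta(k)(r,s)=\gamma_{r^{-1}}(\tilde k(r,s))$ is precisely what the paper's proof does when it writes $T_{r,s}=k_T(r,s)U_{rs^{-1}}=U_r\gamma_{r^{-1}}(k_T(r,s))U_{s^{-1}}$ and conjugates by the diagonal unitary $V(x\otimes\delta_r)=U_rx\otimes\delta_r$, where $U$ is a unitary implementation of $\gamma$. The difference is organizational: the paper works spatially from the start (fixing $M\sbe\bB(X)$ with $\gamma=\Ad U$, invoking Theorem~\ref{thm: concrete description of algebra of kernels} to realize $\bk_\wstar(\cM)=\bk^\pi_\wstar(\cM)$, and obtaining both statements at once by $\Ad(V)$, including $V^*(mU_t\otimes\lambda_t)V=\Pi(m)(1\otimes\lambda_t)$ for the crossed-product part), whereas you untwist at the level of kernels and of the module $\ell^2_\wstar(\cM)\cong\ell^2_\wstar(G,M)$ and only then pass to the $\Wstar$ level. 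What your route buys is that the algebraic isomorphism and the equivariance computation $(\beta_t a)_{r,s}=\gamma_t(a_{rt,st})$ are representation-free; what the paper's route buys is that the $\Wstar$ extension is automatic, since conjugation by a unitary is trivially a normal isomorphism.

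The one soft spot is the step you yourself flag: ``taking $\wstar$-closures and invoking Theorem~\ref{thm: concrete description of algebra of kernels}'' does not by itself produce the $\Wstar$ isomorphism, because a normal $*$-homomorphism that is isometric on a $\wstar$-dense C$^*$-subalgebra (here $\bk(\cM)$, via $\Psi\circ$ the identification) can still have a nontrivial kernel, so injectivity of the extension of $\Theta$ needs an argument. Two clean ways to close it: (i) note that once $\gamma$ is unitarily implemented, your module unitary $f\mapsto\hat f$, $\hat f(t)=\gamma_{t^{-1}}(\tilde f(t))$, is spatially implemented by the same $V$ above, so the extension of $\Theta$ is $\Ad(V^*)$ restricted to $\bk^\pi_\wstar(\cM)$ --- at which point your proof coincides with the paper's; or (ii) check directly that $\Psi$ of Lemma~\ref{lem: the homomorphism Psi} is faithful for this bundle, which is easy because each fiber $\pi(M_t)=MU_t$ contains the unitary $U_t$, so $T\pi^2(f)=0$ for all $f$ forces $T(x\otimes\delta_t)=0$ for all $x,t$. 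With either fix, your fixed-point argument for the second assertion (via Theorem~\ref{thm: W subalgebra}, matching the parametrization $u\mapsto a_{u,e}$ with the generators $\Pi(m)(1\otimes\lambda_t)$ of $M\bar\rtimes_\gamma G$) goes through and agrees with the paper's conclusion.
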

\begin{proof}
    Say $\gamma$ is implemented by a unitary representation $U\colon G\to \bB(X)$ with $M$ a $\wstar$-subalgebra of $\bB(X)$ containing the unit.
    Then $\pi\colon \cM\to \bB(X),$ $m\delta_t\mapsto mU_t$, is a fiberwise normal and isometric representation that we can use to get $\bk_\wstar(\cM)\cong \bk_\wstar^\pi(\cM)\subset \bB(X\otimes \ell^2(G))$.
    By definition, $T\in \bk_\wstar^\pi(\cM)$ if and only if there exists a function $k_T\colon G\times G\to M$ such that $T_{r,s}=k_T(r,s)U_{r\smu}$; which we can write as $T_{r,s} = U_r\gamma_\rmu(k_T(r,s))U_\smu$.
    Consequently, for the operator unitary operator $V\in \bB(X\otimes \ell^2(G))$ such that $V(x\otimes \delta_r)=U_rx\otimes \delta_r$, we have $V (\bB(X)\bar\otimes M) V^*=\bk_\wstar^\pi(\cM)\cong \bk_\wstar(\cM)$.
    Moreover, if we consider $M\bar\rtimes_\gamma G\subset \bB(X\otimes \ell^2(G))$ as done in \cite{ADaction1979}, then $V (M\bar\rtimes_\gamma G) V^*=W^*_\red(\cM)$ because $V^*(mU_t\otimes \lambda_t)V =\Pi(m)(1\otimes\lambda_t)$ with $\Pi(m)\xi(r)=\gamma_\rmu(m)\xi(r)$.
 \end{proof}

\subsection{The W*-algebra of kernels as a reduced cross-sectional W*-algebra}\label{sec:enlarged-Fell-bundles}
We  aim to give \(\ell^\infty(G, \cM) := \{\ell^\infty(G, M_t)\}_{t \in G}\) the structure of a W*-Fell bundle.
Naturally, the norm of \(f \in \ell^\infty(G, M_r)\) is defined as \(\|f\| := \sup\{\|f(t)\| : t \in G\}\).  
Observe that if \({M_t}_*\) is the predual of \(M_t\), then \(\ell^\infty(G, M_t)\) is the dual space of \(\ell^1(G, {M_t}_*)\).  
The product and involution for \(f \in \ell^\infty(G, M_r)\) and \(g \in \ell^\infty(G, M_s)\) are given by $(fg)(t) := f(t)g(\rmu t)$ and $f^*(t) := f(rt)^*$.

\begin{example}\label{exa: ell infty of semidirect product bundle}
    If \(\cM\) is the semidirect product bundle of a \Wstar dynamical system $(M,G,\gamma)$, then \(\ell^\infty(G, \cM)\) is the semidirect product bundle of the system $(\ell^\infty(G, M),G,\tilde{\gamma})$ with \(\tilde{\gamma}_t(f)(r) = \gamma_t(f(\tmu r))\).
\end{example}

\begin{remark}
    At this point we do not know if $\ell^\infty(G,\cM)$ is a W*-Fell bundle. 
    One can check this directly, or one may use $\pi$ to construct a function $\pi^\infty\colon \ell^\infty(G,\cM)\to \bB(X\otimes \ell^2(G))$ as in the proof of Theorem \ref{thm: iso algebra of kernels}.
    That function happens to be fiberwise linear, isometric and $\wstar-\wot$ continuous on bounded sets.
    In addition, it is multiplicative and involutive.
    Using $\pi^\infty$ one proves that $\ell^\infty(G,\cM)$ is a W*-Fell bundle.
    For example, take $f,g,h\in \ell^\infty(G,\cM)$.
    Since $(fg)h$ and $f(gh)$ are in the same fiber and $\pi^\infty((fg)h)=\pi^\infty(f)\pi^\infty(g)\pi^\infty(h)=\pi^\infty(f(gh))$, we have  $(fg)h=f(gh)$.
\end{remark}

We regard \(\cM\) as a \Wstar Fell subbundle of \(\ell^\infty(G, \cM)\) by identifying \(m \in M_r\) with the constant function \( (t \mapsto m) \in \ell^\infty(G, M_r)\).
By Remark \ref{rmk:subbundles}, $W^*_\red(\cM)$ is a \Wstar subalgebra of $W^*_\red(\ell^\infty(G,\cM)).$
On the other hand, $W^*_\red(\cM)$ is also a W*-subalgebra of $\bk_\wstar(\cM)$ (Theorem \ref{thm: W subalgebra}).

\begin{theorem}[c.f. {\cite[Lemme 3.10]{ADaction1979}}]\label{thm: iso algebra of kernels}
    There exists an isomorphism $\Omega\colon W^*_\red(\ell^\infty(G,\cM))\to \bk_\wstar(\cM)$ whose restriction to $W^*_\red(\cM)$ is the identity map.
\end{theorem}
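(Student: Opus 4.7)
The strategy is to realize both W*-algebras concretely inside $\bB(X \otimes \ell^2(G))$ (or a trivial tensor extension thereof) and exhibit an explicit unitary intertwining them. The key construction is a representation $\pi^\infty$ of the enlarged bundle that simultaneously exhibits it as the concrete W*-algebra of kernels $\bk_\wstar^\pi(\cM)$.

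First, I would define $\pi^\infty\colon \ell^\infty(G, \cM) \to \bB(X \otimes \ell^2(G))$ by $\pi^\infty(f)(x \otimes \delta_s) := \pi(f(rs))\, x \otimes \delta_{rs}$ for $f \in \ell^\infty(G, M_r)$; equivalently $\pi^\infty(f) = D_f \, (1 \otimes \lambda_r)$, where $D_f(x \otimes \delta_u) := \pi(f(u))\, x \otimes \delta_u$ is the associated diagonal operator. A direct computation shows that $\pi^\infty$ is fiberwise isometric, normal, multiplicative, and involutive -- the twist by $\lambda_r$ precisely converts the asymmetric product $(fg)(t) = f(t)g(r^{-1}t)$ into operator composition -- which incidentally verifies the claim from the preceding Remark that $\ell^\infty(G, \cM)$ is a W*-Fell bundle. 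The matrix entry of $\pi^\infty(f)$ at position $(u, s)$ equals $\pi(f(u))\delta_{u, rs}$, so $\pi^\infty(f) \in \bk_\wstar^\pi(\cM)$; conversely, each finitely supported kernel $k \in \bk_F^\pi(\cM)$ decomposes as a finite sum $\sum_t \pi^\infty(f_t)$ with $f_t(u) := k(u, t^{-1}u)$. Taking norm and then $\wstar$-closures yields
\[
    \cspn^\wstar\{\pi^\infty(f) : f \in \ell^\infty(G, M_t),\, t \in G\} = \bk_\wstar^\pi(\cM) \cong \bk_\wstar(\cM),
\]
the last isomorphism being Theorem~\ref{thm: concrete description of algebra of kernels}.

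Next, I would apply the concrete realization~\eqref{equ: Wred of M} to the bundle $\ell^\infty(G, \cM)$ with the representation $\pi^\infty$, obtaining
\[
    W^*_\red(\ell^\infty(G, \cM)) \cong \cspn^\wstar\{\pi^\infty(f) \otimes \lambda_t : f \in \ell^\infty(G, M_t),\, t \in G\} \subset \bB\bigl(X \otimes \ell^2(G) \otimes \ell^2(G)\bigr).
\]
Let $W \in \bB(X \otimes \ell^2(G) \otimes \ell^2(G))$ be the Fourier-type unitary $W(x \otimes \delta_u \otimes \delta_v) := x \otimes \delta_u \otimes \delta_{uv}$. A direct calculation gives $W^*(\pi^\infty(f) \otimes \lambda_t)W = \pi^\infty(f) \otimes 1$, so conjugation by $W$ identifies the span above with $\bk_\wstar^\pi(\cM) \otimes 1 \cong \bk_\wstar(\cM)$; this produces $\Omega$. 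Restricted to $W^*_\red(\cM)$, the map is the identity: a constant function $f \equiv m \in M_t$ gives $\pi^\infty(f) = \pi(m) \otimes \lambda_t$, which $\Omega$ sends to the same element regarded as a generator of $W^*_{\red\pi}(\cM) \subset \bk^\pi_\wstar(\cM)$.

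The main technical obstacle is verifying that the twisted formula $\pi^\infty(f) = D_f(1 \otimes \lambda_r)$ defines a genuine Fell bundle representation: the shift factor $\lambda_r$ is precisely what reconciles the non-symmetric multiplication of $\ell^\infty(G, \cM)$ with the commutative diagonal multiplication of the $D_f$'s, and the index bookkeeping is delicate. Once this representation is in hand, Theorem~\ref{thm: concrete description of algebra of kernels} together with the explicit unitary $W$ complete the argument with little further effort.
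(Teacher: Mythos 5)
Your proof is correct, and it shares the paper's skeleton: the same representation $\pi^\infty$ of $\ell^\infty(G,\cM)$ on $X\otimes\ell^2(G)$, the concrete picture of $W^*_\red(\ell^\infty(G,\cM))$ on $X\otimes\ell^2(G)\otimes\ell^2(G)$ via \eqref{equ: Wred of M}, and an explicit unitary implementing $\Omega$. Where you genuinely diverge is in the choice of target realization and unitary. The paper realizes $\bk_\wstar(\cM)$ as $\bk_\wstar^{\pi\lambda}(\cM)$ (kernels relative to the representation $\pi\lambda$, living on the same triple tensor product) and conjugates by the unitary $U(x\otimes\delta_q\otimes\delta_s)=x\otimes\delta_{qs^{-1}q}\otimes\delta_q$, proving the equality $UW^*_{\red\pi^\infty}(\ell^\infty(G,\cM))U^*=\bk_\wstar^{\pi\lambda}(\cM)$ by computing the matrix entries $(UTU^*)_{r,s}$ of a general element $T$ and then a separate density argument for the reverse inclusion. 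You instead first establish $\cspn^\wstar\{\pi^\infty(f)\colon f\in\ell^\infty(G,M_t),\ t\in G\}=\bk_\wstar^\pi(\cM)$ -- your decomposition of a finitely supported kernel as $\sum_t\pi^\infty(f_t)$ with $f_t(u)=k(u,t^{-1}u)\in M_t$ is exactly right and gives both inclusions cheaply -- and then use the multiplicative-unitary-type $W(x\otimes\delta_u\otimes\delta_v)=x\otimes\delta_u\otimes\delta_{uv}$, for which the single generator identity $W^*(\pi^\infty(f)\otimes\lambda_t)W=\pi^\infty(f)\otimes 1$ (which I checked; it holds because the shift by $t$ built into $\pi^\infty(f)$ is absorbed by $W$) does all the work: surjectivity of $\Omega$ then follows from normality of $\Ad(W^*)$ plus your span computation, and the compatibility with $W^*_\red(\cM)$ is immediate from $\pi^\infty(m)=\pi(m)\otimes\lambda_t=\pi\lambda(m)$ for constant sections, matching the identification of $W^*_\red(\cM)$ inside $\bk_\wstar^\pi(\cM)$ used in Theorem \ref{thm: W subalgebra}. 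The items you defer as ``direct computations'' -- fiberwise isometry and $\wstar$-continuity of $\pi^\infty$, needed to invoke \eqref{equ: Wred of M} -- are exactly what the paper checks via the pairing $\langle\pi^\infty(f)\xi,\eta\rangle=\sum_{s\in G}\langle\pi(f(s))\xi(r^{-1}s),\eta(s)\rangle$, so there is no gap there. Net effect: your route is a mild but real streamlining (the standard trick of trivializing the dual-coaction-type twist by the fundamental unitary), trading the paper's matrix-entry verification for a one-line intertwining relation on generators.
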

\begin{proof}
    Usign $\pi\lambda\colon \cM\to \bB(X\otimes \ell^2(G))$, $\pi\lambda(m\in M_t)=\pi(m)\otimes \lambda_t$, and Theorem \ref{thm: concrete description of algebra of kernels} we get $\bk_\wstar(\cM) = \bk^{\pi\lambda}_\wstar(\cM)\sbe \bB(X\otimes \ell^2(G)\otimes \ell^2(G)).$
    Our next goal is to identify $W^*_\red(\ell^\infty(G,\cM))$ with a von Neumann subalgebra of $\bB(X\otimes \ell^2(G)\otimes \ell^2(G)).$
    To do this we construct a *-representation $\pi^\infty\colon \ell^\infty(G,\cM)\to \bB(X\otimes \ell^2(G)).$
    For $f\in \ell^\infty(G,M_r)$ we define $\pi^\infty(f)$ by $(\pi^\infty(f)\xi)(s)=\pi(f(s))\xi(\rmu s)$ for all $s\in G$ and $\xi\in \ell^2(G,X)=X\otimes \ell^2(G).$
    Then $\pi^\infty(f)(x\otimes \delta_s) = \pi(f(rs))x\otimes\delta_{rs}$ and, for $g\in \ell^\infty(G,M_t)$,
    \begin{align*}
        \pi^\infty(f)\pi^\infty(g)(x\otimes \delta_s)
        = \pi(f(rts))\pi(g(ts))x\otimes \delta_{rts} = \pi(fg(rts))x\otimes \delta_{rts}= \pi^\infty(fg)(x\otimes \delta_t).
    \end{align*}
    Then $\pi^\infty(f)\pi^\infty(g) = \pi^\infty(fg).$
    In addition, $\pi^\infty(f^*)=\pi^\infty(f)^*$ because for all $\eta\in \ell^2(G,X)$ we have
    \begin{align*}
        \langle \pi^\infty(f)(x\otimes \delta_s),y\otimes \delta_t \rangle 
        &= \langle \pi(f(rs))x,y\rangle\langle \delta_{rs},\delta_t\rangle
        = \langle x\otimes \delta_s,\pi(f(t)^*)y\otimes \delta_{\rmu t}\rangle\\
        &=\langle x\otimes \delta_s,\pi(f^*(\rmu t))y\otimes \delta_{\rmu t}\rangle = \langle x\otimes \delta_s , \pi^\infty(f^*)(y\otimes \delta_t)\rangle.
    \end{align*}

    The identity $\langle \pi^\infty(f)\xi,\eta\rangle = \sum_{s\in G} \langle \pi(f(s))\xi(\rmu s),\eta(s)\rangle$ can be used to prove that $\pi^\infty$ is isometric on each fiber and $\wstar-\wot$ continuous when restricted to bounded sets. 
    Then, $\pi^\infty$ is $\wstar$ continuous and we may identify $W^*_\red(\ell^\infty(G,M))$ with 
    \begin{equation*}
        W^*_{\red\pi^\infty}(\ell^\infty(G,M)) =\cspn^\wstar\{  \pi^\infty(f) \otimes \lambda_t\colon  t\in G,\ f\in \ell^\infty(G,M_t)\}\sbe \bB(X\otimes \ell^2(G)\otimes \ell^2(G)).
    \end{equation*}

    Let $U\colon X\otimes \ell^2(G)\otimes \ell^2(G)\to X\otimes \ell^2(G)\otimes \ell^2(G)$ be the the bounded operator such that $U(x\otimes \delta_q\otimes \delta_s) = x\otimes \delta_{q\smu q} \otimes \delta_q$.
    Then $U$ is unitary and $U^*(x\otimes \delta_q\otimes \delta_s) = x\otimes \delta_s\otimes \delta_{sq^{-1}s}.$ 
    We claim that
    \begin{equation}\label{equ: congugate by U W}
     UW^*_{\red\pi^\infty}(\ell^\infty(G,M)) U^* = \bk_\wstar^{\pi\lambda}(\cM).   
    \end{equation}
    To prove the inclusion $\sbe $  we take $T\in W^*_{\red\pi^\infty}(\ell^\infty(G,M))$ and let $f_T$ be the cross section of $\ell^\infty(G,\cM)$ such that $T_{r,s} = \pi^\infty(f_T(r\smu)).$
    For all $r,s,p,q\in G$ and $x,y\in X$ we have
    \begin{align*}
     \langle (U T U^*)_{r,s} (y\otimes \delta_q),x\otimes \delta_p\rangle& =     \langle U T U^* (y \otimes \delta_q\otimes \delta_s ),x \otimes \delta_p\otimes \delta_r \rangle
        \\
        & = \langle T (y\otimes \delta_s \otimes \delta_{sq^{-1}s} ),x\otimes\delta_r\otimes  \delta_{rp^{-1}r} \rangle \\
        & =\langle T_{rp^{-1}r,sq^{-1}s} ( y\otimes \delta_s),x\otimes \delta_r \rangle \\
        & = \langle \pi^\infty( f_T( rp^{-1}r\smu q \smu ) ) (y\otimes \delta_s),x\otimes \delta_r \rangle\\
        & =\langle \pi( f_T( rp^{-1}r\smu q \smu  )(rp^{-1}r\smu q) )y\otimes \delta_{rp^{-1}r\smu q},x\otimes \delta_r\rangle\\
        & =\langle \pi( f_T( rp^{-1}r\smu q \smu  )(rp^{-1}r\smu q) )y,x\rangle \langle \delta_{rp^{-1}r\smu q},\delta_r\rangle\\
        & =\langle \pi( f_T( r\smu  )(r) )y,x\rangle \langle \delta_{p^{-1}r\smu q},\delta_e\rangle\\
        & =\langle \pi( f_T( r\smu  )(r) )y,x\rangle \langle\lambda_{r\smu}\delta_q,\delta_p\rangle\\
        & = \langle [\pi( f_T( r\smu  )(r))\otimes \lambda_{r\smu}] (y\otimes \delta_q),x\otimes \delta_p\rangle\\
        & = \langle \pi\lambda(f_T( r\smu  )(r)) (\delta_q\otimes y),\delta_p\otimes x\rangle.
    \end{align*}
    Then $(U T U^*)_{r,s} = \pi\lambda(f_T( r\smu  )(r))$ and it follows that $UTU^*\in \bk^{\pi\lambda}_\wstar(\cM).$

    Given $k\in \bk_c(\cM)$, define the cross section $f\colon G\to \ell^\infty(G,\cM)$ by $f(z)(w) = k(w,z^{-1}w).$
    Then $f$ has finite support and for $T:=\sum_{t\in G}\pi^\infty(f(t))\otimes \lambda_t$ we have $f=f_T$.
    Moreover, $(UTU^*)_{r,s}=\pi\lambda( f(r\smu)(r) ) = \pi\lambda(k(r,s))$ and it follows that $\bk^{\pi\lambda}(\cM)\sbe  UW^*_{\red\pi^\infty}(\ell^\infty(G,M)) U^*.$   
    Recalling that $\bk^{\pi\lambda}(\cM)$ is $\wstar$-dense in $\bk^{\pi\lambda}_\wstar(\cM)$ we obtain  \eqref{equ: congugate by U W}.

    Assume that $T\in W^*_{\red}(\cM)\sbe W^*_{\red\pi^\infty}(\ell^\infty(G,M)),$ that is to say that for all $t\in G,$ $f_T(t)\in \ell^\infty(G,M_t)$ is constant.
    Then $(UTU^*)_{rt,st} = \pi\lambda(f_T(rt(st)^{-1})(rt))=\pi\lambda(f_T(rs^{-1})(r))=(UTU^*)_{r,s}$.
    In particular, for $t\in G$ and $m\in M_t$, $U(\pi^\infty(m)\otimes \lambda_t)=\pi\lambda(m)\otimes \lambda_t$ and it follows that, up to faithful normal representations, we may define $\Omega$ as conjugation by $U.$
\end{proof}

\subsection{The W*-partial cross-sectional Fell bundle}\label{sec:partial-Fellbundles}

It is a known fact that given a group $G$ and a normal subgroup $H\trianglelefteq G,$ $G$ is amenable if and only if both $H$ and $G/H$ are amenable.
We want a similar result for W*-Fell bundles.
More precisely, we want to construct a W*-Fell bundle $\dot \cM$ over $\dot G :=G/H$ such that $\cM$ is W$^*$-amenable if and only if both $\dot \cM$ and the reduction $\cM_H:=\{M_t\}_{t\in H}$ are W$^*$-amenable ($\cM_H$ is a W*-Fell bundle with the structure inherited from $\cM$).
For $t\in G$ we set $\dot t \equiv tH.$

We fix a subgroup $H\sbe G$ and define the natural map $\lambda^\cM\colon \cM \to W^*_\red(\cM)$ as the function such that, if we identify $W^*_\red(\cM)=W^*_{\red\pi}(\cM)$ as in \eqref{equ: Wred of M}, then $\lambda^\cM(m) =\pi(m)\otimes \lambda_t$ for $m\in M_t$.

\begin{proposition}
    There exists a *-homomorphism $\mu\colon W^*_\red(\cM_H)\to W^*_\red(\cM)$ such that $\mu\circ \lambda^{\cM_H}=\lambda^{\cM}|_{\cM_H}.$
    Moreover, $\mu(W^*_\red(\cM_H))$ is a W*-subalgebra and $\mu$ is a \Wstar isomorphism over its image.
\end{proposition}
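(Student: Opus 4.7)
The plan is to realize both W*-reduced cross-sectional algebras concretely via a single faithful normal representation $\pi\colon\cM\to\bB(X)$ (which, by restriction, also serves as a faithful normal representation of $\cM_H$), and then build $\mu$ by ampliation along a coset transversal. Using the concrete description \eqref{equ: Wred of M}, I identify
\[
W^*_\red(\cM_H)=\cspn^\wstar\{\pi(m)\otimes \lambda^H_h : h\in H,\ m\in M_h\}\sbe\bB(X\otimes\ell^2(H))
\]
and similarly $W^*_\red(\cM)\sbe\bB(X\otimes\ell^2(G))$. Under these identifications the condition $\mu\circ\lambda^{\cM_H}=\lambda^\cM|_{\cM_H}$ forces $\mu(\pi(m)\otimes\lambda^H_h)=\pi(m)\otimes\lambda^G_h$, so the real task is to produce a normal $*$-homomorphism with exactly this behaviour on generators.

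Next I would choose a transversal $S\sbe G$ for the right cosets of $H$, so that $G=\bigsqcup_{s\in S}Hs$ and every $g\in G$ writes uniquely as $g=hs$ with $h\in H$, $s\in S$. This yields a unitary $V\colon \ell^2(H)\otimes \ell^2(S)\to \ell^2(G)$ with $V(\delta_h\otimes\delta_s)=\delta_{hs}$. A direct calculation using $h_0\cdot(hs)=(h_0h)s$ for $h_0\in H$ shows that
\[
V(\lambda^H_{h_0}\otimes 1_{\ell^2(S)})V^*=\lambda^G_{h_0},\qquad h_0\in H,
\]
i.e.\ under $V$ the restriction of $\lambda^G$ to $H$ is the amplification of $\lambda^H$ along $\ell^2(S)$. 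This is the one identity that needs genuine verification; it is the technical heart of the proof and it is the reason why the right-coset transversal (rather than a left one) is the correct choice.

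Now I define $\mu\colon\bB(X\otimes\ell^2(H))\to \bB(X\otimes\ell^2(G))$ by
\[
\mu(T):=(1_X\otimes V)\bigl(T\otimes 1_{\ell^2(S)}\bigr)(1_X\otimes V^*).
\]
Being the composite of the ampliation $T\mapsto T\otimes 1_{\ell^2(S)}$ and conjugation by a unitary, $\mu$ is a normal, unital, injective $*$-homomorphism from $\bB(X\otimes\ell^2(H))$ into $\bB(X\otimes\ell^2(G))$. By the previous step, $\mu(\pi(m)\otimes\lambda^H_h)=\pi(m)\otimes\lambda^G_h$ for $h\in H$, $m\in M_h$; by $\wstar$-continuity of $\mu$ and the definition of $W^*_\red$ as a $\wstar$-closed span, $\mu$ maps $W^*_\red(\cM_H)$ into $W^*_\red(\cM)$, and the equality $\mu\circ \lambda^{\cM_H}=\lambda^\cM|_{\cM_H}$ holds on the nose.

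Finally, restricting $\mu$ to $W^*_\red(\cM_H)$ gives a faithful normal $*$-homomorphism of W*-algebras. By the standard Krein--Smulian argument recalled in the preamble of Section~\ref{sec: WFell bundles} (applied to the isometric, $\wstar$-continuous map $\mu$ between the preduals' duals), the restriction is a $\wstar$-homeomorphism onto its image, and that image is a $\wstar$-closed $*$-subalgebra of $W^*_\red(\cM)$. This yields both claims: $\mu(W^*_\red(\cM_H))$ is a W*-subalgebra of $W^*_\red(\cM)$ and $\mu$ restricts to a W*-isomorphism onto it. I do not anticipate a major obstacle; the only delicate point is the transversal identification $V(\lambda^H\otimes 1)V^*=\lambda^G|_H$, everything else being automatic from general von Neumann algebra facts.
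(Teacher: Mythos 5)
Your proof is correct and follows essentially the same route as the paper: the paper also fixes the concrete realizations via $\pi$, decomposes $X\otimes\ell^2(G)$ along the cosets of $H$ with unitaries $V_u$ built from a transversal, and defines $\mu$ as the restriction of $T\mapsto\bigoplus_u V_uTV_u^*$, which is exactly your conjugated ampliation $(1\otimes V)(T\otimes 1_{\ell^2(S)})(1\otimes V^*)$ written fiberwise. The verification on generators $\pi(m)\otimes\lambda^H_h\mapsto\pi(m)\otimes\lambda^G_h$ and the appeal to normality to get a W*-isomorphism onto a $\wstar$-closed image coincide with the paper's argument.
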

\begin{proof}
    We have $W^*_\red(\cM)=W^*_{\red\pi}(\cM)\sbe \bB(\ell^2(G,X))$ and $W^*_\red(\cM_H)=W^*_{\red\pi|_{\cM_H}}(\cM_H)\sbe \bB(X\otimes \ell^2(H))$.
    Fix a complete set of representatives of the cosets,  $\{z_u\}_{u\in \dot G}$,  such that $z_H=e$.
    For each $u\in \dot G $ we have a unitary operator $V_u \colon X\otimes\ell^2(H)\to X\otimes\ell^2(u)\sbe X\otimes \ell^2(G)$ such that $V_u\xi(sr_u)=\xi(s).$
    Using the decomposition $X\otimes \ell^2(G)=\bigoplus_{u\in \dot G }X\otimes \ell^2(u)$ we can define a faithful and normal map
    \begin{align*}
        \kappa& \colon \bB(X\otimes \ell^2(H))\to \bB(X\otimes \ell^2(G)) & T&\mapsto \bigoplus_{u\in \dot G } V_u T{V_u}^*.
    \end{align*}
    It suffices to set $\mu:=\kappa|_{W^*_\red(\cM_H)}$ because for all $t\in H$ and $m\in M_t,$ $\kappa(\pi(M)\otimes \lambda^H_t)=\pi(m)\otimes \lambda^G_t$.
\end{proof}

From now on we view $W^*_\red(\cM_H)\sbe W^*_\red(\cM)$ via the map $\mu$ provided by the proposition above.
With this convention there is no problem on defining, for each coset $u\in \dot G $,
\begin{equation*}
  \dot{M}_u\equiv  W^*_\red(\cM_u):=\cspn^\wstar \{ \lambda^\cM(m)\colon t\in u, m\in M_t  \}\sbe W^*_{\red}(\cM).
\end{equation*}

\begin{proposition}\label{prop: the partial cross sectional w bundle}
    If $H$ is normal in $G$, then $\dot{\cM}:=\{ \dot{M}_u \}_{u\in \dot G }$ is a W*-Fell bundle with the structure (norm, product, involution and $\wstar$-topology) inherited from $W^*_\red(\cM).$
\end{proposition}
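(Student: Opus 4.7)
The plan is to verify the structural axioms of a W*-Fell bundle for $\{\dot M_u\}_{u\in \dot G}$, realizing all fibers as $\wstar$-closed subspaces of the W*-algebra $W^*_\red(\cM)$ and using the operations and topology inherited from it. Each $\dot M_u$ is $\wstar$-closed by construction, and as such it acquires a canonical predual from that of $W^*_\red(\cM)$, so the ambient W*-structure on the fibers is automatic. The unit fiber $\dot M_H$ equals $\mu(W^*_\red(\cM_H))$ by the preceding proposition, hence is a W*-algebra; the Fell/$C^*$ identity $\|x\|^2=\|x^*x\|$ on each fiber and the $\dot M_H$-bimodule axioms will follow for free once the grading of multiplication and involution is established.

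The main algebraic step is to prove $\dot M_u\cdot \dot M_v\subseteq \dot M_{uv}$. On the generating set, for $m\in M_s$ with $s\in u$ and $m'\in M_t$ with $t\in v$, one has $\lambda^\cM(m)\lambda^\cM(m')=\lambda^\cM(mm')$ with $mm'\in M_{st}$. This is where normality of $H$ enters: the coset identity $uv=sHtH=stH$ holds precisely because $tH=Ht$, so $st\in uv$ and $\lambda^\cM(mm')\in \dot M_{uv}$. To promote this to arbitrary elements of the fibers I will use that multiplication in the von Neumann algebra $W^*_\red(\cM)$ is separately $\wstar$-continuous: fixing $y$ in the linear span of the generators of $\dot M_v$, the map $x\mapsto xy$ is $\wstar$-continuous and sends the $\wstar$-dense linear span of the generators of $\dot M_u$ into the $\wstar$-closed set $\dot M_{uv}$, so the inclusion propagates to all of $\dot M_u$; a symmetric argument in the other variable then yields $\dot M_u\cdot\dot M_v\subseteq \dot M_{uv}$. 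The involution case is simpler: on generators $\lambda^\cM(m)^*=\lambda^\cM(m^*)\in \lambda^\cM(M_{s^{-1}})$, and $s^{-1}\in u^{-1}$ since $(sH)^{-1}=Hs^{-1}=s^{-1}H$ by normality, after which the $\wstar$-continuous involution propagates the inclusion $\dot M_u^*\subseteq \dot M_{u^{-1}}$ to the $\wstar$-closures.

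Once the graded product holds, the bimodule structure over $\dot M_H$ is automatic from $Hu=uH=u$, and the Fell condition is inherited from the ambient $C^*$-algebra $W^*_\red(\cM)$. The only step requiring genuine care is the propagation of $\dot M_u\cdot\dot M_v\subseteq \dot M_{uv}$ from generators to $\wstar$-closures via separate $\wstar$-continuity; everything else is bookkeeping around the coset arithmetic, for which normality of $H$ is the essential hypothesis (without it, $sHtH$ need not be a single coset, so the ``fibers'' $\dot M_u$ would not admit a grading indexed by $\dot G$ at all).
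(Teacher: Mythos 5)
Your proposal is correct and follows essentially the same route as the paper, whose proof simply observes that it suffices to verify $\dot{M}_u^*\sbe \dot{M}_{u^{-1}}$ and $\dot{M}_u\dot{M}_v\sbe \dot{M}_{uv}$; you carry out exactly these verifications (coset arithmetic on the generators $\lambda^\cM(m)$, using normality of $H$, followed by separate $\wstar$-continuity of the product and $\wstar$-continuity of the involution to pass to the $\wstar$-closed spans), with the remaining axioms inherited from $W^*_\red(\cM)$ as you note.
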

\begin{proof}
    Basically, it suffices to show that for all $u,v\in \dot G ,$ ${\dot{M}_u}^* \sbe \dot{M}_{u^{-1}}$ and $\dot{M}_u\dot{M}_v\sbe \dot{M}_{uv}$.
\end{proof}

\begin{definition}[{c.f.\cite[VIII 6]{FlDr88}}]\label{Def:Partial-Cross-Sec}
    For a normal subgroup $H\trianglelefteq G$, the \Wstar{}partial cross-sectional bundle over $\dot G $ derived from $\cM$ is $\dot{\cM}.$
\end{definition}

\begin{theorem}[c.f. Theorem \ref{thm: W subalgebra}]\label{thm: identificacion of kernels of WMH}
    Let $H$ be a normal subgroup of $G$ and write $\bk_\wstar(\cM)^H$ for the W*-algebra of fixed points of $\beta^\wstar|_H$. Then there exists a W*-isomorphism 
    $$\mu\colon \bk_\wstar(\dot{\cM})\congto \bk_\wstar(\cM)^H$$ such that $\mu(W^*_\red(\dot{\cM}))=W^*_\red(\cM).$
\end{theorem}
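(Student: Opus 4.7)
The strategy is to realize both algebras as concrete von Neumann algebras via Theorem~\ref{thm: concrete description of algebra of kernels}, and to exhibit $\mu$ as compression by an explicit isometry. First, I apply the theorem with $\pi$ to identify $\bk_\wstar(\cM) = \bk_\wstar^\pi(\cM) \subseteq \bB(X \otimes \ell^2(G))$, where $\beta^\wstar$ becomes $\Ad(1\otimes \rho^G)$, so that $\bk_\wstar(\cM)^H$ consists of matrices $(T_{r,s})$ with $T_{r,s} \in \pi(M_{rs^{-1}})$ and $T_{rh, sh} = T_{r,s}$ for all $h \in H$. Since each $\dot M_u = W^*_\red(\cM_u)$ is a $\wstar$-closed subspace of $W^*_\red(\cM) \subseteq \bB(X\otimes \ell^2(G))$, the identity inclusion $\iota \colon \dot{\cM} \hookrightarrow \bB(X\otimes \ell^2(G))$ is a fiberwise normal isometric representation; applying Theorem~\ref{thm: concrete description of algebra of kernels} to $\iota$ identifies $\bk_\wstar(\dot{\cM}) = \bk_\wstar^\iota(\dot{\cM}) \subseteq \bB(X \otimes \ell^2(G) \otimes \ell^2(\dot G))$ as matrices $(S_{u,v})_{u,v \in \dot G}$ with $S_{u,v} \in \dot M_{uv^{-1}}$.

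Next, I would define the isometry $V \colon X \otimes \ell^2(G) \to X \otimes \ell^2(G) \otimes \ell^2(\dot G)$ by $V(x \otimes \delta_r) := x \otimes \delta_r \otimes \delta_{\dot r}$ and set $\mu(S) := V^* S V$. A direct computation yields $\mu(S)_{r,s} = (S_{\dot r, \dot s})_{r,s}$, from which $\mu(S) \in \bk_\wstar^\pi(\cM)^H$: the membership $\mu(S)_{r,s} \in \pi(M_{rs^{-1}})$ is inherited from $S_{\dot r, \dot s} \in \bk_\wstar^\pi(\cM)$, and the $H$-invariance $\mu(S)_{rh,sh}=\mu(S)_{r,s}$ combines $\dot{rh} = \dot r$ with the $G$-invariance of the matrix of $S_{\dot r, \dot s} \in W^*_\red(\cM)$ (Theorem~\ref{thm: W subalgebra}). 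Multiplicativity of $\mu$ reduces to $V V^* S V = S V$, which follows from the coset identity $u \dot s^{-1} \cdot s = u$ (valid by normality of $H$): for $S_{u, \dot s} \in \dot M_{u \dot s^{-1}}$ the vector $S_{u, \dot s}(x \otimes \delta_s)$ is supported on the coset $u$, so $S V(x \otimes \delta_s) = \sum_u S_{u, \dot s}(x \otimes \delta_s) \otimes \delta_u$ lies in the image of $V$. The $*$-property is immediate and normality of $\mu$ is clear.

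Injectivity of $\mu$ would follow because $\mu(S) = 0$ forces $(S_{u,v})_{r,s} = 0$ for $r \in u, s \in v$, and the $G$-invariance of the matrix of $S_{u,v} \in W^*_\red(\cM)$ combined with the support condition $(S_{u,v})_{r',s'} = 0$ unless $r's'^{-1} \in uv^{-1}$ propagates this to $S_{u,v} = 0$. For surjectivity, given $T \in \bk_\wstar(\cM)^H$, I would construct $S$ with $S_{u,v} \in \dot M_{uv^{-1}}$ defined as the $G$-invariant extension of the block $(T_{r,s})_{r \in u, s \in v}$ to $\{(r', s') : r's'^{-1} \in uv^{-1}\}$, which is well-defined via the $H$-invariance of $T$. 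Finally, the $W^*_\red$ correspondence follows from Theorem~\ref{thm: W subalgebra} applied to $\dot\cM$: the subalgebra $W^*_\red(\dot\cM)$ is the $\dot G$-fixed part of $\bk_\wstar(\dot\cM)$, and one checks directly that under $\mu$ it corresponds bijectively to the $G$-fixed part $W^*_\red(\cM)$ of $\bk_\wstar(\cM)$.

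The main obstacle will be verifying surjectivity -- specifically, showing that the $G$-invariant extension $S_{u,v}$ of the block $(T_{r,s})_{r \in u, s \in v}$ defines a \emph{bounded} operator on $X \otimes \ell^2(G)$ lying in $\dot M_{uv^{-1}}$, and that the assembled matrix $(S_{u,v})_{u,v \in \dot G}$ is bounded on $X \otimes \ell^2(G) \otimes \ell^2(\dot G)$ with $\|S\| \leq \|T\|$. This will require a careful coset-decomposition argument exploiting the $H$-invariance of $T$ to control norms across different blocks.
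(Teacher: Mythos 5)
Your proposal follows essentially the same route as the paper's proof: the paper also realizes $\bk_\wstar(\dot\cM)=\bk_\wstar^\iota(\dot\cM)$ via the tautological representation $\iota$, defines the same isometry $U(x\otimes\delta_r)=x\otimes\delta_r\otimes\delta_{\dot r}$ (written there as $U\xi=\sum_u p_u\xi\otimes\delta_u$), sets $\mu=U^*(\cdot)U$, proves multiplicativity through the identity $UU^*SU=SU$ using the coset-support of the kernels of elements of $\dot M_{uv^{-1}}$, and establishes injectivity, $H$-invariance of the image, and surjectivity exactly as you outline. The boundedness issue you flag in the surjectivity step is indeed where the bulk of the paper's work lies, and it is resolved there by the same coset-decomposition $\ell^2$-estimates (first for each block $S_{u,v}$, then for the assembled matrix) that you anticipate.
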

\begin{proof}
    We think $W^*_\red(\cM)=W^*_{\red\pi}(\cM)\sbe \bB(X\otimes \ell^2(G))$ and this gives a canonical representation $\iota\colon \dot{\cM}\to \bB(X\otimes \ell^2(G) ),$ $\iota(f)=f.$
    Then 
    \begin{equation*}
     \bk_\wstar(\dot{\cM})=\bk^\iota_\wstar(\dot{\cM})\sbe \bB(X\otimes \ell^2(G) \otimes \ell^2(\dot G )).   
    \end{equation*}

For each $u\in \dot G $ we let $p_u\in \bB(X\otimes \ell^2(G) )$ be the orthogonal projection with image $X\otimes \ell^2(u).$
Define $U\colon X\otimes \ell^2(G) \to X\otimes \ell^2(G)\otimes  \ell^2(\dot G ) $ by $U\xi = \sum_{u\in \dot G } p_u\xi\otimes \delta_u .$
With the identification $X\otimes \ell^2(G)\otimes  \ell^2(\dot G )=\ell^2(\dot G, X\otimes \ell^2(G)),$ $U^*\eta = \sum_{u\in \dot G } p_u \eta(u)$.
Hence, $UU^*\eta = \sum_{v\in \dot G}p_v (\sum_{u\in \dot G } p_u \eta(u))\otimes \delta_v = \sum_{v\in \dot G}p_v \eta(v)\otimes \delta_v.$

It suffices to show that 
\begin{equation*}
    \mu\colon \bk^\iota_\wstar(\dot{\cM})\to \bB(X\otimes \ell^2(G) ),\qquad \mu(T)=U^*TU
\end{equation*}
is a faithful *-homomorphism with image $\bk_\wstar^\pi(\cM)^H$.

Clearly, $\mu$ is linear and preserves the adjoints.
To prove $\mu$ is multiplicative it suffices to show that for all $T\in \bk^\iota_\wstar(\dot{\cM})$ we have $UU^*TU=TU$.
For all $\xi\in X\otimes \ell^2(G) $ and $\eta\in X\otimes \ell^2(G)\otimes \ell^2(\dot G )$
\begin{align*}
    \langle UU^*TU\xi,\eta\rangle 
    &= \langle TU\xi,UU^*\eta\rangle 
    =\sum_{u,v\in \dot G }\langle T  (p_v \xi \otimes\delta_v),p_u\eta(u)\otimes \delta_u\rangle
    =\sum_{u,v\in \dot G }\langle p_uT_{u,v} p_v \xi ,\eta(u)\rangle
\end{align*}

We claim that $\langle p_uT_{u,v}p_v\xi,\eta(u)\rangle =\langle T_{u,v}p_v\xi,\eta(u)\rangle$.
Indeed, by construction $T_{u,v}\in \iota(\dot{M}_{uv^{-1}})\sbe W^*_{\red\pi}(\cM),$ then there exists a cross-section $a_{u,v}^T$ of $\cM$ with $T_{u,v} = (\pi(a_{u,v}^T(r\smu)))_{r,s\in G}$ and $a_{u,v}^T(t)=0$ if $t\notin uv^{-1}.$
We have $\langle p_uT_{u,v}p_v \xi,\eta(u)\rangle = \sum_{r\in u,s\in v}\langle \pi(a_{u,v}^T(r\smu))\xi(s),\eta(r)\rangle.$
If $s\in v$ and $r\notin u,$ then $a_{u,v}^T(r\smu)=0$ because, in other case, $r\smu\in uv^{-1}$ and this yields $r\in u.$
So,
\[ \langle p_uT_{u,v}p_v \xi,\eta\rangle=\sum_{r\in G,s\in v}\langle \pi(a_{u,v}^T(r\smu))\xi(s),\eta(r)\rangle=\langle T_{u,v}p_v\xi,\eta(r)\rangle\]
and we get $\langle UU^*TU\xi,\eta\rangle =\sum_{u\in \dot G }\sum_{v\in \dot G }\langle T_{u,v}p_v\xi,\eta(u)\rangle
    =\langle T(\sum_{v\in \dot G }p_v\xi\otimes\delta_v),\eta \rangle =\langle TU\xi,\eta\rangle.$
Consequently, for all $S,T\in \bk_\wstar(\dot{\cM}),$ $\mu(S)\mu(T)=U^*SUU^*TU=USTU^*=\mu(ST).$

At this point we know $\mu$ is a $\wstar$-continuous *-homomorphism. 
To prove that $\mu(T)\in \bk_\wstar(\cM)^H$ and that $\mu$ is faithful we compute $\mu(T)_{r,s}.$
For $\xi,\eta\in X$ and $r,s\in G$
\begin{align*}
    \langle \mu(T)_{r,s}\xi,\eta\rangle 
     &= \langle \mu(T)(\xi\otimes  \delta_s ),\eta\otimes  \delta_r \rangle
     = \langle T(\xi\otimes  \delta_s\otimes \delta_{sH} ),\eta\otimes  \delta_r\otimes \delta_{rH} \rangle
       = \langle T_{rH,sH}( \xi\otimes  \delta_s ),\eta\otimes  \delta_r \rangle.     
\end{align*}

Assume that $\mu(T)=0$ and fix $u,v\in \dot{G}$.
For all $r,s\in G$ and $x,y\in X$ we have
\[ \langle T_{u,v}(y\otimes \delta_s),x\otimes\delta_r\rangle = \langle \pi(a_{u,v}^T(r\smu))y,x\rangle.\]
If $r\smu\notin uv^{-1},$ the $a_{u,v}^T(r\smu)=0$ and $\langle T_{u,v}(y\otimes \delta_s),x\otimes\delta_r\rangle=0$.
If $r\smu\in uv^{-1},$ then $r\smu = pq^{-1}$ for some $p\in u$ and $q\in v.$
In this case, $\langle T_{u,v}(y\otimes \delta_s),x\otimes\delta_r\rangle = \langle \pi(a_{u,v}^T(pq^{-1}))y,x\rangle = \langle T_{u,v}(y\otimes \delta_q),x\otimes\delta_p\rangle = \langle \mu(T)_{p,q}(y\otimes \delta_q),x\otimes\delta_p\rangle=0.$
Then, for all $u,v\in \dot G$; $r,s\in G$ and $x,y\in X$, $\langle T_{u,v}(y\otimes \delta_s),x\otimes\delta_r\rangle=0$, so $T=0$.

For all $r,s\in G$ and $t\in H$ we have 
\begin{align*}
   \langle \mu(T)_{rt,st}\xi,\eta\rangle 
   &= \langle T_{rtH,stH}(\xi\otimes \delta_{st}),\eta\otimes \delta_{rt}\rangle 
= \langle T_{rH,sH}(\xi\otimes \delta_{st}),\eta\otimes \delta_{rt}\rangle
= \langle \pi( a_{rH,sH}^T(rt (st)^{-1}))\xi,\eta\rangle\\
&  = \langle \pi( a_{rH,sH}^T(rs^{-1}))\xi,\eta\rangle
=\langle \mu(T_{r,s}\xi,\eta\rangle.
\end{align*}
Hence, $\mu(T)_{rt,st}=\mu(T)_{r,s}$ and it follows that $\mu(T)\in \bk^{\pi}_\wstar(\cM)^H$.

The final step is to prove that $\mu$ is surjective, to do this we take $T\in \bk_\wstar^\pi(\cM)^H$ with kernel $k\colon G\times G\to \cM$ and construct $S\in \bk_\wstar^\iota(\dot{\cM})$ such that $\mu(S)=T.$
Fix a complete set of representatives $\{ p_u \}_{u\in \dot G }.$
We claim that for each $(u,v)\in \dot G \times \dot G $ there exists a function 
\begin{equation*}
    k_{u,v}\colon G\times G\to \cM \qquad k_{u,v}(r,s)=\begin{cases}
        k(p_u h,p_vk)\mbox{ if }(r,s)=(p_uht,p_vkt) \mbox{ for some }h,k\in H, \ t\in G\\
        0\in M_{r\smu}\mbox{ otherwise}
    \end{cases}.
\end{equation*}
To prove $k$ is defined we assume $(p_uh_1t_1,p_vk_1t_1)=(p_uh_2t_2,p_vk_2t_2)$ with $h_1,h_2,k_1,k_2\in H$ and $t_1,t_2\in G.$
Then ${h_1}^{-1}h_2={k_1}^{-1}k_2= t_1{t_2}^{-1}\in H$ and the fact that $T$ is a $\beta^\wstar|_H$-fixed point implies
\begin{equation*}
    k_{u,v}(p_u h_1,p_vk_1) = k_{u,v}(p_u h_1 t_1{t_2}^{-1} ,p_vk_1 t_1{t_2}^{-1})=k(p_uh_2,p_vk_2)\in M_{(p_u h_2t_2)(p_vk_2t_2)^{-1}}=M_{r\smu},
\end{equation*}
so $k$ is defined and it is a kernel.

To prove the existence of an operator $S_{u,v}\in \bk^\pi_\wstar(\cM)$ with kernel $k_{u,v}$ it suffices to show that for all $\xi,\eta\in C_c(G,X),$
\begin{equation*}
    |\sum_{r,s\in G} \langle \pi(k_{u,v}(r,s))\xi(s),\eta(r)\rangle|\leq \| T \|\|\xi\|\|\eta\|.
\end{equation*}

To eliminate null terms of the sum above we consider only the pairs $(r,s)=(p_uht,p_vkt)$  with $h,k\in H$ and $t\in G.$
To avoid repetitions we set $t=p_w$ for $w\in \dot G .$
Defining $\eta_{u,w}\in X \otimes \ell^2(u)$ by $\eta_{u,v}(z)=\eta(zp_w)$ we get
\begin{align*}
    \sum_{r,s\in G} \langle \pi(k_{u,v}(r,s))\xi(s),\eta(r)\rangle
    & = \sum_{w\in \dot G } \sum_{h,k\in H} \langle T_{p_uh,p_vk} \xi(p_vkp_w),\eta(p_uhp_w)\rangle\\
    & = \sum_{w\in \dot G } \langle T \xi_{v,w},\eta_{u,w}\rangle.
\end{align*}
Using the fact that $X\otimes \ell^2(G) $ is the direct sum of the orthogonal family $\{\ell^2(u)\otimes X\colon u\in \dot G \}$ we obtain
\begin{align*}
    |\sum_{r,s\in G} \langle \pi(k_{u,v}(r,s))\xi(s),\eta(r)\rangle|
    & \leq \|T\|  (\sum_{w\in \dot G }\|\xi_{v,w}\|^{2})^{1/2}(\sum_{w\in \dot G }\|\eta_{v,w}\|^{2})^{1/2}\\
    & \leq \|T\|  (\sum_{w\in \dot G }\sum_{h\in H}\|\xi(p_uhp_w)\|^{2})^{1/2}(\sum_{w\in \dot G }\|\eta_{v,w}\|^{2})^{1/2}\\
    & \leq \|T\|  (\sum_{t\in G}\|\xi(p_ut)\|^{2})^{1/2}(\sum_{w\in \dot G }\|\eta_{v,w}\|^{2})^{1/2}
    \leq \|T\|\|\xi\|\|\eta\|.
\end{align*}

To prove that $S_{u,v}\in W^*_{\red\pi}(\cM)$ we use the characterization of $W^*_{\red\pi}(\cM)$ as the fixed points of $\bk_\wstar^\pi(\cM)$ under the canonical action of $G.$
If $(r,s)=(p_uhl,p_vkl)$ with $h,k\in H$ and $l\in G,$ then for all $t\in G$ we have $(rt,st)=(p_uhlt,p_vklt)$ and $k_{u,v}(rt,st)=k(p_uh,p_vk)=k_{u,v}(r,s).$
If $k_{u,v}(r,s)=0,$ it can not happen that $k_{u,v}(rt,st)\neq 0$ for, in that case, what we proved in the preceding sentence would imply that $0\neq k_{u,v}(rt,st)=k_{u,v}(rt\tmu,st\tmu)=k_{u,v}(r,s).$

Now we show there exists an operator $S\in \bk_\wstar^\iota(\dot{\cM})$ with kernel $(r,s)\mapsto S_{u,v}.$
By construction, $\langle S_{u,v}\xi,\eta\rangle = \sum_{w\in \dot G }\langle T\xi_{v,w},\eta_{u,v}\rangle.$
For all $\xi,\eta\in X\otimes \ell^2(G)\otimes  \ell^2(\dot G ) $ we have
\begin{align*}
   | \sum_{u,v\in \dot G }\langle S_{u,v}\xi(v) & ,\eta(u)\rangle |
   =|\sum_{u,v,w\in \dot G }\langle T (\xi(v))_{v,w},(\eta(u))_{u,w} \rangle | \\
   & =|\sum_{w\in \dot G }\langle T\sum_{v\in \dot G }(\xi(v))_{v,w},\sum_{u\in \dot G }(\eta(u))_{u,w} \rangle | \\
   &\leq \|T\| \left(\sum_{w\in \dot G }\| \sum_{v\in \dot G } (\xi(v))_{v,w} \|^2 \right)^{1/2}\left(\sum_{w\in \dot G }\| \sum_{u\in \dot G } (\eta(u))_{u,w} \|^2 \right)^{1/2}\\
   &\leq \|T\| \left(\sum_{w\in \dot G } \sum_{v\in \dot G } \|(\xi(v))_{v,w} \|^2 \right)^{1/2}\left(\sum_{w\in \dot G } \sum_{u\in \dot G } \|(\eta(u))_{u,w} \|^2 \right)^{1/2}\\
   &\leq \|T\| \left(\sum_{w\in \dot G } \sum_{v\in \dot G } \sum_{h\in H}\|\xi(v)(p_vhp_w)\|^2 \right)^{1/2}\left(\sum_{w\in \dot G } \sum_{u\in \dot G } \|(\eta(u))_{u,w} \|^2 \right)^{1/2}\\
   &\leq \|T\| \left(\sum_{v\in \dot G } \|\xi(v)\|^2 \right)^{1/2}\left( \sum_{u\in \dot G } \|\eta(u)\|^2 \right)^{1/2}=\|T\|\|\xi\|\|\eta\|.
\end{align*}
This implies the existence of the operator $S\in \bk_\wstar^\iota(\dot{\cM})$ with matrix $(S_{u,v})_{u,v\in \dot G }.$

Suppose $T\in W^*_\red(\cM),$ to prove $S\in W^*_{\red}(\dot{\cM})$ it suffices to show that for all $u,v,w\in \dot G $, $k_{uw,vw}=k_{u,v}.$
Take $r,s\in G.$
If $k_{u,v}(r,s)\neq 0$, then there exists $h,k\in H$ and $t\in G$ such that $(r,s)=(p_u ht,p_vkt).$
Hence, 
\begin{align*}
 (r,s) & =(p_u p_w {p_w}^{-1}hp_w {p_w}^{-1}t,p_vp_w{p_w}^{-1}kp_w{p_w}^{-1}t)   \\
& = (p_{uw} \underbrace{{p_{uw}}^{-1}p_u p_w}_{\in H} \underbrace{{p_w}^{-1}hp_w}_{\in H} {p_w}^{-1}t,p_{vw} \underbrace{{p_{vw}}^{-1}p_vp_w}_{\in H}\underbrace{{p_w}^{-1}kp_w}_{\in H}{p_w}^{-1}t)
\end{align*}
and $k_{uw,vw}(r,s)=k(p_uhp_w,p_vkp_w)=k(p_uh,p_wk)=k_{u,v}(r,s).$
The condition $k_{u,v}(r,s)= 0$ implies $k_{uw,vw}(r,s)=0$ for, in other case, what we just proved implies $0=k_{u,v}(r,s) = k_{uww^{-1},vww^{-1}}(r,s)=k_{uw,vw}(r,s)\neq 0.$

The final part of the proof consists on showing that $\mu(S)=T.$
By construction, 
\begin{align*}
    \langle \mu(S)(\xi\otimes \delta_r),\eta\otimes\delta_s \rangle
    &= \langle (S_{sH,rH})_{s,r} \xi,\eta\rangle 
     = \langle \pi(k_{sH,rH}(r,s))\xi,\eta\rangle = \langle \pi(k(r,s))\xi,\eta\rangle\\
     &=\langle T (\xi\otimes \delta_r),\eta\otimes\delta_s \rangle,
\end{align*}
where the second to last equality follows from the facts that $(r,s)=(p_{rH}{p_{rH}}^{-1}r,p_{sH}{p_{sH}}^{-1}s)$ and ${p_{rH}}^{-1}r,{p_{sH}}^{-1}s\in H$.
\end{proof}

\subsubsection{An example: semidirect product bundles}
Take a W*-dynamical system $(M,G,\gamma)$, a normal subgroup $H\trianglelefteq G$ and let $\cM=\{M\delta_t\}_{t\in G}$ be the semidirect product bundle of $\gamma$.
The reduction $\cM_H$ is the semidirect product of $\gamma|_H$ and the von Neumann crossed product $M\bar{\rtimes}_{\gamma|_H}H$ is the fiber over $H$ of $\dot{\cM}.$
Even though $\dot{\cM}$ is not the semidirect product of an action, it induces an action $\delta$ of $\dot G $ on $Z(M\bar{\rtimes}_{\gamma|_H}H)$ and $\dot{\cM}$ is W$^*$-amenable if and only if $\delta$ is W$^*$-amenable \cite{abadie2021amenability}.

To the best of our knowledge, it has not been proved that $\gamma$ is W$^*$-amenable if and only if $\delta$ and $\gamma|_H$ are amenable (this a consequence of Theorem \ref{thm: M amena iff N and Mh amenable}).
The closest result is \cite[Proposition 2.8]{ADactionII1982}, which implies that if $\gamma|_H$ is W$^*$-amenable and $\dot G $ is amenable, then $\gamma$ is W$^*$-amenable.

\section{Amenability, fixed points and conditional expectations}\label{sec: amenability and fixed points}

In this section we continue working with the bundle $\cM$ and representation $\pi$ of Section \ref{sec: WFell bundles} and add a subgroup $H\sbe G$.
The central partial action of $G$ on $Z:=Z(M_e)$ induced by $\cM$, $\sigma=(\{Z_t\}_{t\in G},\{\sigma_t\}_{t\in G})$, was defined in \cite{abadie2021amenability}.
Recall that $Z_t=Z(\cspn^\wstar(M_tM_\tmu))$ and that, for all $t\in G$, $m\in M_t$ and $n\in Z_\tmu,$ $\sigma_t(n)m = m n.$
The W*-enveloping action of $\sigma$ is isomorphic to $\beta^\wstar|_{Z(\bk_\wstar(\cM))}$ \cite[Theorem 5.14]{abadie2021amenability}.
By definition, $\cM$ is W$^*$-amenable if and only if $\beta^\wstar|_{Z(\bk_\wstar(\cM))}$ is a W$^*$-amenable (i.e. $\sigma$ is W$^*$-amenable).

\begin{theorem}\label{thm: Mh amenble cond exp to fixed point algebra}
    If the reduction $\cM_H=\{M_t\}_{t\in H}$ is W$^*$-amenable, then there exists a conditional expectation from $\bk_\wstar(\cM)$ to $\bk_\wstar(\cM)^H.$
\end{theorem}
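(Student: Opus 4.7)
The strategy is to construct the desired conditional expectation as a point-ultraweak cluster point of a net of normal completely positive contractions on $\bk_\wstar(\cM)$ built from the W*-approximation property (W*AP) of $\cM_H$, which is the content of W*-amenability of $\cM_H$ by \cite{abadie2021amenability}. Conceptually this mirrors the classical construction of Anantharaman-Delaroche \cite{ADaction1979} producing a conditional expectation from an amenable action, adapted here to the W*-Fell bundle setting.

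\emph{Step 1 (ingredients from W*AP).} By hypothesis and the characterization of W*-amenability in \cite{abadie2021amenability}, fix a bounded net $\{\xi_i\}_{i\in I}\sbe C_c(H,\cM_H)$ with $\sum_{t\in H}\xi_i(t)^*\xi_i(t)\leq 1_{M_e}$, such that the associated positive-type sections $\phi_i(t):=\sum_{s\in H}\xi_i(s)^*\xi_i(st)\in M_t$ converge to the canonical unit in the appropriate point-$\wstar$ sense, and such that $\xi_i$ is asymptotically right-$H$-invariant (i.e.\ $\xi_i(\cdot h^{-1})-\xi_i(\cdot)\to 0$ weakly for all $h\in H$). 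Extending each $\xi_i$ by zero off $H$, form $L_i:=\pi\lambda(\xi_i)=\sum_{t\in H}\pi(\xi_i(t))\otimes\lambda_t\in W^*_\red(\cM)\sbe \bk_\wstar(\cM)$.

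\emph{Step 2 (CP maps).} Define
\[
\Phi_i\colon \bk_\wstar(\cM)\to \bk_\wstar(\cM),\qquad \Phi_i(T):=L_i^*\,T\,L_i.
\]
Each $\Phi_i$ is normal, completely positive and contractive. A direct matrix-coefficient computation in $\bk^\pi_\wstar(\cM)$ gives
\[
\Phi_i(T)_{r,s}=\sum_{t,u\in H}\pi(\xi_i(t))^*\,T_{tr,us}\,\pi(\xi_i(u)),
\]
and a parallel calculation shows $(L_i^*L_i)_{r,s}=\pi(\phi_i(rs^{-1}))$ when $rs^{-1}\in H$ and vanishes otherwise, so that $L_i^*L_i\in W^*_\red(\cM)$. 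Because $\lambda_t$ and $\rho_h$ commute for $t\in G,\ h\in H$, the operator $L_i$ commutes with $1\otimes\rho_h$ for all $h\in H$, so each $\Phi_i$ is $\beta^\wstar|_H$-equivariant.

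\emph{Step 3 (limit).} By $\wstar$-compactness of the unit ball of the normal completely positive contractions on $\bk_\wstar(\cM)$, pass to a subnet of $\{\Phi_i\}$ converging point-ultraweakly to a normal completely positive contraction $P\colon\bk_\wstar(\cM)\to\bk_\wstar(\cM)$, which inherits $\beta^\wstar|_H$-equivariance. It remains to verify
\begin{enumerate}[(a)]
\item $P(T)\in \bk_\wstar(\cM)^H$ for every $T\in \bk_\wstar(\cM)$;
\item $P(T)=T$ for every $T\in \bk_\wstar(\cM)^H$.
\end{enumerate}
Together these give that $P$ is a conditional expectation onto $\bk_\wstar(\cM)^H$. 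For (b), the $H$-invariance of $T$ allows one to slide the $t,u$ variables in the formula for $\Phi_i(T)_{r,s}$ via $T_{tr,us}=T_{r,s\cdot(\cdot)}$-type identities and collapse the double sum to an expression involving $\phi_i$, which tends to $T_{r,s}$ by the W*AP convergence $\phi_i\to 1$. For (a), the difference $\Phi_i(T)_{rh,sh}-\Phi_i(T)_{r,s}$ is controlled by the failure of $\xi_i$ to be right-$H$-invariant, and the W*AP forces this failure to vanish point-ultraweakly in the limit.

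\emph{Main obstacle.} The principal technical difficulty is item (a): each $\Phi_i$ is $H$-\emph{equivariant} but not $H$-\emph{invariant} in its image, so $H$-invariance of $P(T)$ must emerge only in the limit. This has to be extracted from the asymptotic right-$H$-invariance of $\xi_i$ encoded in the W*AP, which is the W*-Fell bundle analogue of passing from a Følner sequence to an invariant mean in classical group amenability. A careful matrix-coefficient accounting, together with the $\wstar$-continuity of $\beta^\wstar|_H$ and the positive-type character of $\phi_i$, is what makes this work, and is the technical core of the proof.
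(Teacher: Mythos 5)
Your construction does not achieve the key property that makes the paper's proof work, and the mechanism you propose to rescue it cannot do so. Writing $\Phi_i(T)=L_i^*TL_i$ with $L_i=\sum_{t\in H}\pi(\xi_i(t))\otimes\lambda_t$, you correctly observe that $L_i$ commutes exactly with $1\otimes\rho_h$; but this makes $\Phi_i$ \emph{exactly} $\beta^\wstar|_H$-equivariant, so the invariance defect of the output is $\beta^\wstar_h(\Phi_i(T))-\Phi_i(T)=\Phi_i(\beta^\wstar_h(T)-T)$, which in the limit is $P(\beta^\wstar_h(T)-T)$ and has no reason whatsoever to vanish. The ``asymptotic right-$H$-invariance of $\xi_i$'' you invoke never enters this defect precisely because of the exact commutation you noted, and in any case such an invariance is not part of the W*AP and was not justified. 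What is actually needed for the range condition $P(\bk_\wstar(\cM))\sbe\bk_\wstar(\cM)^H$ is an \emph{average over the $\beta_h$-translates of $T$}: the paper's maps have the form $P_i(T)_{r,s}=\sum_{h\in H}\pi(\text{central weight at }rh)\,T_{rh,sh}\,\pi(\text{central weight at }sh)$, built from a net $f_i\colon H\to\tilde Z$ (values in the \emph{commutative} enveloping algebra of the restricted central partial action, via coset representatives $q_r=q_{rH}$), and a change of variable in the sum over $h$ shows $P_i(T)_{rk,sk}=P_i(T)_{r,s}$ for \emph{every} $T$, i.e.\ each $P_i$ already lands in $\bk_\wstar(\cM)^H$. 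Your formula $\Phi_i(T)_{r,s}=\sum_{t,u\in H}\pi(\xi_i(t))^*T_{tr,us}\pi(\xi_i(u))$ involves left translations of the indices and no sum over translates of $T$, so no such exact (or limiting) invariance is available.

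There is a second gap in your item (b). The fixed-point algebra $\bk_\wstar(\cM)^H$ contains $W^*_\red(\cM)$, whose generators $\pi(m)\otimes\lambda_g$ live over \emph{all} $g\in G$. Computing your $\Phi_i$ on such an element, the diagonal matrix entries that should recover $T_{r,s}$ take the form $\sum_{u\in H}\pi\bigl(\xi_i(gug^{-1})^*\,m\,\xi_i(u)\bigr)$ (already assuming $H$ normal; the theorem is stated for an arbitrary subgroup, which your computation does not accommodate). The W*AP of $\cM_H$ only gives convergence $\sum_{u}\xi_i(hu)^*m\,\xi_i(u)\to m$ for $m$ in fibres over $H$; it says nothing about the conjugation-twisted sums with $g\notin H$ that your argument requires, and the cross terms $t^{-1}gu\neq g$ are not controlled either. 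The paper avoids both problems at once: because the weights $f_i$ are \emph{central} (values in $\tilde Z$), they can be slid across $k_T(r,s)$ via the partial action $\sigma_{rs^{-1}}$, and after using the coset representatives the twist that appears is $\tau_k$ with $k=q_r^{-1}rs^{-1}q_s=(q_r^{-1}r)(s^{-1}q_s)\in H$, so the Anantharaman--Delaroche positive-type condition for the $H$-action $\tilde\tau$ (from \cite[Th\'eor\`eme~3.3]{ADactionII1982}) is exactly what is needed. Your approach, which conjugates by a non-central $\ell^2$-section instead of averaging with central weights, has no access to either of these devices, so the two verifications (a) and (b) that you defer to ``a careful matrix-coefficient accounting'' are not merely technical: as set up, they fail.
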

\begin{proof}
    Clearly, the restriction $\sigma|_H:=(\{Z_t\}_{t\in H},\{\sigma_t\}_{t\in H})$ of $\sigma$ is the central partial action of $\cM_H$.
    Let $\tau$ and $N$ be the W*-enveloping action and algebra, respectively, of $\sigma$, see Proposition 2.7 and Remark 2.9 of \cite{abadie2021amenability}.
    We regard $Z$ as a W*-ideal of $N$ and denote $p$ the unit of $Z.$
    The $\wstar$-closure $\tilde{Z}$ of the *-ideal $\sum_{t\in H}\tau_t(Z)$ of $N$ is $\tau|_H$ invariant, so $\tilde{\tau}:=\tau|_H|_{\tilde{Z}}$ is a global W*-action of $H$.
    Moreover, $\tilde{\tau}$ is the W*-enveloping action of $\sigma|_H$ because, for all $t\in H$ and $z\in Z_\tmu$, we have $Z\cap \tilde{\tau}_t(Z)=Z\cap \tau_t(Z)=Z_t$ and $\tilde{\tau}_t(z)=\tau_t(z)=\sigma_t(z).$
    In addition, the $\wstar$-closure of $\sum_{t\in H}\tilde{\tau}_t(Z)=\sum_{t\in H}\tau_t(Z)$ is $\tilde{Z}$.
    The unit of $\tilde{Z}$ will be denoted $\tilde{p}.$

    By \cite[Definition 5.16]{abadie2021amenability}, $\tilde{\tau}$ is amenable and  \cite[Théorème 3.3]{ADactionII1982} gives a net of finitely supported functions $\{f_i\colon H\to \tilde{Z}\}_{i\in H}$ such that: (a) for all $i\in I,$ $\sum_{t\in H}f_i(t)^*f_i(t)\leq \tilde{p}$ and (b) for all $s\in G$, $\wstar \lim_i \sum_{t\in G} f_i(t)\tilde{\tau}_s(f_i(\smu t))=\tilde{p}$.
    Moreover, \cite[Lemme 3.2]{ADsystemes1987} implies that we may assume all the $f_i$'s take positive values.

    Fix a complete set of representatives $\{q_u\}_{u\in \dot G }$ for the cosets of $\dot G$.
    For $t\in G$ we set $q_t:=q_{tH}$ and $q_t^{-1}\equiv q_{tH}^{-1}$.
    We claim there is a linear and ccp function (that we express in matrix form)
    \begin{align}
        P_i\colon  \bB(X\otimes \ell^2(G) )&\to \bB(X\otimes \ell^2(G) ) \label{equ: Pi domain codomain}\\ (T_{r,s})_{r,s\in G}&\mapsto \left({}_iT_{r,s}:=\sum_{h\in H} \pi(p\tau_{q_r}(f_i(q_r^{-1}r h))^*)T_{rh,sh}\pi(\tau_{q_s}(f_i(q_{s}^{-1}s h))p)\right)_{r,s\in G}.\label{equ: Pi formula}
    \end{align}
    To prove this we define, for each $h\in H,$ the linear operator $V_h\colon C_c(G,X)\to C_c(G,X)$ by $V_h\xi(r)=\pi(\tau_{q_r}(f_i(q_r^{-1}r))p)\xi(rh^{-1}).$
    Since $\|f_i(q_r^{-1}r)\|\leq \|\sum_{k\in H}f_i(k)^*f_i(k)\|^{1/2}\leq \|p\|,$ $V_h$ is a linear contraction.
    Denoting  $1_N$ the unit of $N$, we have
    \begin{align*}
        \sum_{h\in H}\|V_h\xi\|^2
        &=\sum_{h\in H}\sum_{r\in G} \langle \pi(\tau_{q_r}(f_i(q_r^{-1}r))p)\xi(rh^{-1}),\pi(\tau_{q_r}(f_i(q_r^{-1}r))p)\xi(rh^{-1})\rangle\\
        &=\sum_{h\in H}\sum_{r\in G} \langle \pi(\tau_{q_{rh}}(f_i(q_{rh}^{-1}rh))p)\xi(r),\pi(\tau_{q_{rh}}(f_i(q_{rh}^{-1}rh))p)\xi(r)\rangle\\
        &=\sum_{r\in G}\sum_{h\in H} \langle \pi(\tau_{q_{r}}(f_i(q_{r}^{-1}rh))p)\xi(r),\pi(\tau_{q_{r}}(f_i(q_{r}^{-1}rh))p)\xi(r)\rangle\\
        &=\sum_{r\in G}\sum_{h\in H} \langle \pi(p\tau_{q_{r}}(f_i(h)^*f_i(h))p)\xi(r),\xi(r)\rangle\\
        &\leq \sum_{r\in G}\langle \pi(p\tau_{q_r}(\tilde{p})p)\xi(r),\xi(r)\rangle
        \leq \sum_{r\in G}\langle \pi(p1_Np)\xi(r),\xi(r)\rangle
        =\|\xi\|^2.
    \end{align*}
    
    For all $\xi,\eta\in C_c(G,X)$ we have
    \begin{align*}
        \sum_{r,s\in G}\langle {}_iT_{r,s} \xi(s),\eta(r)\rangle
        & = \sum_{h\in H}\sum_{r,s\in G} \langle  \pi(p\tau_{q_r}(f_i(q_r^{-1}r h))^*)T_{rh,sh}\pi(\tau_{q_s}(f_i(q_{s}^{-1}s h))p) \xi(s),\eta(r)\rangle\\
        & = \sum_{h\in H}\sum_{r,s\in G} \langle  \pi(p\tau_{q_{rh^{-1}}}(f_i(q_{rh^{-1}}^{-1}r))^*)T_{r,s}\pi(\tau_{q_{sh^{-1}}}(f_i(q_{sh^{-1}}^{-1}s))p) \xi(sh^{-1}),\eta(rh^{-1})\rangle\\
        & = \sum_{h\in H}\sum_{r,s\in G} \langle  T_{r,s}\pi(\tau_{q_s}(f_i(q_s^{-1}s))p) \xi(sh^{-1}),\pi(\tau_{q_r}(f_i(q_r^{-1}r))p)\eta(rh^{-1})\rangle\\
        & = \sum_{h\in H}\sum_{r,s\in G} \langle  T_{r,s}V_h\xi(s),V_h\eta(r)\rangle
         = \sum_{h\in H}\langle  TV_h\xi,V_h\eta\rangle
          \leq \|T\|\sum_{h\in H}\|V_h\xi\|\|V_h\eta\|\\
        & \leq \|T\|\left(\sum_{h\in H}\|V_h\xi\|^2\right)^{1/2}\left(\sum_{h\in H}\|V_h\eta\|^2\right)^{1/2} \leq \|T\|\|\xi\|\|\eta\|.
    \end{align*}
    In the first inequality, we are implicitly saying that $h\mapsto \langle  TV_h\xi,V_h\eta\rangle$ is integrable with respect to the counting measure because $|\langle  TV_h\xi,V_h\eta\rangle|\leq \|T\|\|V_h\xi\|\|V_h\eta\|$ and $\sum_{h\in H}\|V_h\xi\|^2+\sum_{h\in H}\|V_h\eta\|^2\leq \|\xi\|^2+\|\eta\|^2$.
    All the computations above imply the existence of the linear contraction $P_i$ of \eqref{equ: Pi domain codomain} satisfying \eqref{equ: Pi formula}.
    Moreover, $\langle P_i(T)\xi,\eta\rangle = \sum_{h\in H}\langle TV_h\xi,V_h\eta\rangle$ ($\xi,\eta\in C_c(G,X)$) and the complete positivity of $P_i$ follows from the fact that for all $T_1,\ldots,T_n\in \bB(X\otimes \ell^2(G) )$ and $\xi_1,\ldots,\xi_n\in C_c(G,X)$
    \begin{equation*}
        \sum_{j,k=1}^n \langle P_i({T_j}^*T_k)\xi_k,\xi_j\rangle
        = \sum_{h\in H} \sum_{j,k=1}^n\langle {T_j}^*T_k V_h\xi_k,V_h\xi_j\rangle
        = \sum_{h\in H} \langle \sum_{k=1}^nT_k V_h\xi_k,\sum_{j=1}^nT_jV_h\xi_j\rangle\geq 0.
    \end{equation*}

    Take  $T\in \bk_\wstar^\pi(\cM)$ and denote $k_T$ the kernel of $T$, i.e. $T_{r,s}=\pi(k_T(r,s))$.
    For all $r,s\in G$ we have
    \begin{equation*}
        P_i(T)_{r,s} =   \pi\left(\sum_{h\in H}p\tau_{q_r}(f_i(q_r^{-1}r h))^*k_T(r,s)\tau_{q_s}(f_i(q_{s}^{-1}s h))p\right).
    \end{equation*}
    Since $\tau_{q_s}(f_i(q_{s}^{-1}s h))p\in M_e$ and $k_T(r,s)\in M_{r\smu}$, we have 
    \begin{equation*}
     \sum_{h\in H}p\tau_{q_r}(f_i(q_r^{-1}r h))^*k_T(r,s)\tau_{q_s}(f_i(q_{s}^{-1}s h))p\in M_{r\smu}   
    \end{equation*}
    and it follows that $P_i(T)_{r,s}\in \pi(M_{r\smu})$.
    Hence, $P_i(T)\in \bk_\wstar^\pi(\cM)$ and we also have $P_i(T)\in \bk_\wstar^\pi(\cM)^H$ because, for all $r,s\in G$ and $k\in H,$ combining the left invariance of the counting measure of $H$ with the fact that $q_{rk}\equiv q_{rkH}=q_{rK}\equiv q_r$ we get 
    \begin{align*}
        P_i(T)_{rk,sk}&=\sum_{h\in H} \pi(p\tau_{q_{rk}}(f_i(q_{rk}^{-1}rk h))^*)T_{rkh,skh}\pi(\tau_{q_{sk}}(f_i(q_{sk}^{-1}s kh))p)\\
        &=\sum_{h\in H} \pi(p\tau_{q_r}(f_i(q_r^{-1}r h))^*)T_{rh,sh}\pi(\tau_{q_s}(f_i(q_{s}^{-1}s h))p)
        = P_i(T)_{r,s}.
    \end{align*}
   
   We claim that if $T\in \bk_\wstar^\pi(\cM)^H$, then $\{P_i(T)\}_{i\in I}$ $\wstar$-converges to $T$.
   Since the net is bounded, it suffices to show that for all $r,s\in G$ and $x,y\in X$, $\lim_i\langle P_i(T)_{r,s}x,y\rangle= \langle T_{r,s}x,y\rangle. $
   
   By construction, $Z_t$ is the center of the W*-ideal of $M_e$ generated by $M_t{M_t}^*$ and $\sigma_t(a)m=ma$ for all $t\in G,$ $m\in M_t$ and $a\in Z_{\tmu}.$   
   Recall that $p$ is the unit of $Z$ and $Z_t=Z\cap \tau_t(Z)$ because $\tau$ is an enveloping action of $\sigma$.
   Hence, $p\tau_t(p)$ is the unit of $Z_t$.
   Denoting $k$ the kernel of $T$, we have $k(rh,sh)=k(r,s)$ for all $h\in H$.
   Using these properties and the fact that $N$ is abelian we get
   \begin{align*}
       P_i(T)_{r,s}
       & = \sum_{h\in H} \pi (p\tau_{q_r}(f_i(q_r^{-1}r h))^*k_T(rh,sh)\tau_{q_s}(f_i(q_{s}^{-1}s h))p)\\
       & =\pi\left( \sum_{h\in H} p\tau_{q_r}(f_i(q_r^{-1}r h))^*k_T(r,s)\tau_{q_s}(f_i(q_{s}^{-1}s h))p\right)\\
       & =\pi\left( \sum_{h\in H} p\tau_{q_r}(f_i(q_r^{-1}r h))^*k_T(r,s) 
 p\tau_{\smu r}(p)
 \tau_{q_s}(f_i(q_{s}^{-1}s h))p\right)\\
 & =\pi\left( \sum_{h\in H} p\tau_{q_r}(f_i(q_r^{-1}r h))^*\sigma_{r\smu}[p\tau_{\smu r}(p)
 \tau_{q_s}(f_i(q_{s}^{-1}s h))]  k_T(r,s) 
 \right)\\
 & =\pi\left( \sum_{h\in H} p\tau_{q_r}(f_i(q_r^{-1}r h))^*\tau_{r\smu}[p\tau_{\smu r}(p)
 \tau_{q_s}(f_i(q_{s}^{-1}s h))]  
 \right)\pi(k_T(r,s))\\
 & =\pi\left( p\tau_{r\smu}(p) \sum_{h\in H} \tau_{q_r}(f_i(q_r^{-1}r h))^*\tau_{r\smu q_s}(f_i(q_{s}^{-1}s h))
 \right)\pi(k_T(r,s))\\
 & =\pi\left( p\tau_{r\smu}(p) \tau_{q_r}\left[\sum_{h\in H} f_i(q_r^{-1}r h)^*\tau_{q_r^{-1} r\smu q_s}(f_i(q_{s}^{-1}s h))
 \right]\right)\pi(k_T(r,s))\\
 & =\pi\left( p\tau_{r\smu}(p) \tau_{q_r}\left[\sum_{h\in H} f_i(h)^*\tau_{q_r^{-1} r\smu q_s}(f_i(q_{s}^{-1}s\rmu q_r h))
 \right]\right)\pi(k_T(r,s)).
   \end{align*}
Notice that $k:=q_r^{-1} r\smu q_s\in H$.
Recalling that $\{f_i\}_{i\in I}$ is a net witnessing the amenability of $\tau$ we get that in the weak operator topology
\begin{align*}
     \lim_i P_i(T)_{r,s}
     &=\lim_i\pi\left( p\tau_{r\smu}(p) \tau_{q_r}\left[\sum_{h\in H} f_i(h)^*\tau_k(f_i(k^{-1}h))
 \right]\right)\pi(k_T(r,s))\\
 &=\pi\left( p\tau_{r\smu}(p)\right)\pi(k_T(r,s))
 = \pi( p\tau_{r\smu}(p)k_T(r,s)) = \pi(k_T(r,s))=T_{r,s}.
\end{align*}

The ccp maps from $\bB(X\otimes \ell^2(G) )$ to $\bB(X\otimes \ell^2(G) )$ form a compact topological space with the topology of pointwise $\wstar$-convergence, so a subnet of $\{P_i\}_{i\in I}$ has a limit $P$.
Since, for all $i\in I$, $P_i(\bk_\wstar(\cM))\sbe \bk_\wstar(\cM)^H$ and both  $\bk_\wstar(\cM)$ and $\bk_\wstar(\cM)^H$ are closed in the $\wstar$-topology, $P(\bk_\wstar(\cM))\sbe \bk_\wstar(\cM)^H$ .
In addition, for all $T\in \bk_\wstar(\cM)^H,$ $T=\lim_i P_i(T) = P(T).$
By Tomiyama's Theorem, $\bk_\wstar(\cM)\to \bk_\wstar(\cM)^H, $ $T\mapsto P(T),$ is a conditional expectation.
\end{proof}

\begin{proposition}\label{prop: ell infty M is amenable}
    The W*-Fell bundle \(\ell^\infty(G, \cM)\) is W$^*$-amenable.
\end{proposition}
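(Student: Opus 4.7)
The plan is to unfold the definition of W*-amenability for $\ell^\infty(G,\cM)$, identify the W*-enveloping of its central partial action as a lifted global action, and then produce an amenability net by hand. Let $\sigma=(\{Z_t\}_{t\in G},\{\sigma_t\}_{t\in G})$ denote the central partial action of $\cM$ on $Z:=Z(M_e)$, and let $\tau\colon G\curvearrowright N$ be its W*-enveloping action; recall that $N$ is commutative.

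First, I would compute $\tilde\sigma$, the central partial action of $\ell^\infty(G,\cM)$. Its domain is $Z(\ell^\infty(G,M_e))=\ell^\infty(G,Z)$. A direct computation using the product $(fg)(s)=f(s)g(r^{-1}s)$ in $\ell^\infty(G,\cM)$ together with the defining relation $\sigma_t(n)m=mn$ for $\sigma$ gives $\tilde Z_t=\ell^\infty(G,Z_t)$ and
\begin{equation*}
\tilde\sigma_t(n)(s)=\sigma_t(n(t^{-1}s));
\end{equation*}
that is, $\tilde\sigma$ is the pointwise ``lift'' of $\sigma$.

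Second, I would show that the W*-enveloping of $\tilde\sigma$ is the lifted global action $\tilde\tau\colon G\curvearrowright \ell^\infty(G,N)$ given by $\tilde\tau_s(F)(t):=\tau_s(F(s^{-1}t))$. Applying pointwise the analogous properties of the pair $\sigma\subseteq\tau$, one checks three items: (i) $\ell^\infty(G,Z)$ is a $\tilde\tau$-compatible W*-ideal of $\ell^\infty(G,N)$; (ii) $\tilde\tau_t|_{\ell^\infty(G,Z_{t^{-1}})}=\tilde\sigma_t$; and (iii) $\sum_{s\in G}\tilde\tau_s(\ell^\infty(G,Z))$ is $\wstar$-dense in $\ell^\infty(G,N)$. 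Property (iii) is the most delicate, as it requires duality with the $\ell^1$-section algebras rather than purely pointwise reasoning.

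By the paper's definition, W*-amenability of $\ell^\infty(G,\cM)$ then reduces to amenability of the global action $\tilde\tau$. For this I would exhibit an explicit net: for each finite $F\subseteq G$, define $f_F\colon G\to \ell^\infty(G,N)$ by
\begin{equation*}
f_F(t)(u):=\indfun_F(t)\,\delta_{t,u}\cdot 1_N.
\end{equation*}
Then $f_F$ has support contained in $F$, $\sum_{t\in G} f_F(t)^*f_F(t)=\indfun_F\cdot 1_N\leq 1$, and, using $\tau_s(1_N)=1_N$, one obtains
\begin{equation*}
\Big(\sum_{t\in G}f_F(t)\,\tilde\tau_s(f_F(s^{-1}t))\Big)(u)=\indfun_F(u)\,\indfun_F(s^{-1}u)\cdot 1_N,
\end{equation*}
which tends pointwise boundedly to $1_N$ as $F\uparrow G$, hence $\wstar$-converges to the unit of $\ell^\infty(G,N)$. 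Invoking \cite[Théorème 3.3]{ADactionII1982} gives amenability of $\tilde\tau$, hence of $\tilde\sigma$, and therefore W*-amenability of $\ell^\infty(G,\cM)$. The main obstacle I anticipate is step~2, the identification of the W*-enveloping of $\tilde\sigma$: the $\wstar$-density condition (iii) is not purely algebraic and requires some duality manipulation, whereas step~3 is then an elementary verification independent of any amenability assumption on $G$.
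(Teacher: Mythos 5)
Your proposal is correct, and its first two steps coincide with the paper's proof: you identify the central partial action of $\ell^\infty(G,\cM)$ on $Z(\ell^\infty(G,M_e))=\ell^\infty(G,Z)$ as the pointwise lift of $\sigma$ twisted by translation, and its W$^*$-enveloping action as the lifted global action $\tilde\tau$ on $\ell^\infty(G,N)$ (the paper writes this as $\tau\bar\otimes\sigma^e$ on $\ell^\infty(G)\bar\otimes Z^e$, which is the same object). The paper, like you, states this identification without a detailed proof; the density point (iii) you flag does hold -- e.g.\ the finite suprema of the projections $\tilde\tau_s(1\otimes p)$ increase to the unit and one concludes by pairing with $\ell^1(G,N_*)$ and dominated convergence -- so this is not a genuine gap. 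Where you diverge is the final step: the paper deduces amenability of $\tau\bar\otimes\sigma^e$ from the amenability of the translation action on $\ell^\infty(G)$ (witnessed by the diagonal conditional expectation $P(f)(t)=f(t)(t)$) together with the tensor-product permanence result \cite[Proposition~3.9]{ADactionII1982}, whereas you verify amenability of $\tilde\tau$ directly by exhibiting the Reiter-type net $f_F(t)=\indfun_F(t)\,\delta_t\otimes 1_N$ and invoking \cite[Th\'eor\`eme~3.3]{ADactionII1982}. Your computation is correct: $\sum_t f_F(t)^*f_F(t)=\indfun_F\otimes 1_N\le 1$ and $\sum_t f_F(t)\tilde\tau_s(f_F(s^{-1}t))=\indfun_{F\cap sF}\otimes 1_N$, which converges $\wstar$ to the unit against $\ell^1(G,N_*)$ for every fixed $s$, with no amenability assumption on $G$. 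In effect you reprove, by hand, exactly the special case of Proposition~3.9 that the paper needs; your route is more self-contained (it only uses the net characterization of amenability), while the paper's is shorter because it outsources the work to Anantharaman-Delaroche's permanence result.
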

\begin{proof}
    The action $\tau$ on the W*-algebra $\ell^\infty(G),$ $\tau_s(f)(t)=f(\smu t),$ is W$^*$-amenable because the function $P\colon \ell^\infty(G,\ell^\infty(G))\to \ell^\infty(G)$, $P(f)(t)=f(t)(t),$ is an equivariant conditional expectation.

    Let $\sigma=(\{Z_t\}_{t\in G},\{\sigma_t\})$ be the central partial action of $\cM$ and $\sigma^e$ its enveloping action, with enveloping algebra $Z^e.$
    The center of $\ell^\infty(G,M_e)$ is $\ell^\infty(G,Z)$ and we claim that the partial action of $G$ on $\ell^\infty(G,Z)$ defined by $\ell^\infty(G,\cM)$ is $\tau\overline{\otimes}\sigma=(\{\ell^\infty(G,Z_t)\}_{t\in G},\{ \tau_t\overline{\otimes}\sigma_t \}_{t\in G})$ with $\tau_t\overline{\otimes}\sigma_t(f)(s)=\sigma_t(f(\tmu s)),$ for all $s,t\in G$ and $f\in \ell^\infty(G,Z_\tmu).$
    To prove this we need to identify the W*-ideal of $\ell^\infty(G,M_e)$ generated by $\ell^\infty(G,M_t)^*\ell^\infty(G,M_t).$
    For all $f,g\in \ell^\infty(G,M_t)$ and $s\in G$, $f^*g(s)=f^*(s)g(ts) = f(ts)^*g(ts).$
    These products W*-generate $\ell^\infty(G, \cspn^\wstar M_t^*M_t )$ and the centre of this algebra is $\ell^\infty(G,Z_\tmu)$ because, by definition, $Z_\tmu=\cspn^\wstar M_t^*M_t.$
    For $f\in \ell^\infty(G,M_t)$, $g\in \ell^\infty(G,Z_\tmu)$ and $s\in G$ we have, $fg (s)=f(s)g(\tmu s)=\sigma_t(g(\tmu s))f(s)=[\tau_t\overline{\otimes}\sigma_t(g)f](s).$
    Then $\tau\overline{\otimes}\sigma$ is the W*-partial action of $G$ on $Z(\ell^\infty(G,M_e))$ defined by $\ell^\infty(G,\cM)$.

    Notice that $\tau\overline{\otimes}\sigma^e$ is the W*-enveloping action of $\tau\overline{\otimes}\sigma$ and $\tau\overline{\otimes}\sigma^e$ is W$^*$-amenable by \cite[Proposition 3.9]{ADactionII1982}.
    Hence, $\ell^\infty(G,\cM)$ is W$^*$-amenable.
\end{proof}

The definition of conditional expectations between Fell bundles, particularly W*-Fell bundles, was introduced by Exel \cite[Definition 21.19]{ExlibroAMS}. We adopt it without requiring $\wstar$-continuity.

The next Theorem below is inspired in \cite{ADaction1979}, specifically  Definition 3.4 and Proposition 4.1 (see Corollary \ref{cor:kernel and cross sectional algebra for Wstar semidirect product bundle} and Example \ref{exa: ell infty of semidirect product bundle}), and complements the characterization of W$^*$-amenability in terms of the W*AP, see \cite[Theorem 6.7]{abadie2021amenability}.

\begin{theorem}\label{thm: new characterization}
    For every W*-Fell bundle $\cM=\{M_t\}_{t\in G},$ the following are equivalent.
    \begin{enumerate}[(a)]
        \item\label{item: M WAD amenable} $\cM$ is W$^*$-amenable.
        \item\label{item: cond exp ell infty} There exists a conditional expectation $P\colon \ell^\infty(G,\cM)\to \cM.$
        \item\label{item: cond exp k} There exists a conditional expectation $Q\colon \bk_\wstar(\cM)\to W^*_\red(\cM).$
    \end{enumerate}
\end{theorem}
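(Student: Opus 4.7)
The plan is to prove the cycle of implications (a) $\Rightarrow$ (c) $\Rightarrow$ (b) $\Rightarrow$ (a).

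The implication (a) $\Rightarrow$ (c) is immediate from Theorem \ref{thm: Mh amenble cond exp to fixed point algebra} applied in the trivial case $H=G$: the W$^*$-amenability of $\cM=\cM_G$ produces a conditional expectation from $\bk_\wstar(\cM)$ onto $\bk_\wstar(\cM)^G$, and by Theorem \ref{thm: W subalgebra} this fixed-point algebra coincides with $W^*_\red(\cM)$.

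For (c) $\Rightarrow$ (b), I would transport $Q$ through the isomorphism $\Omega\colon W^*_\red(\ell^\infty(G,\cM))\congto \bk_\wstar(\cM)$ of Theorem \ref{thm: iso algebra of kernels}, which restricts to the identity on $W^*_\red(\cM)$, thereby obtaining a conditional expectation $\tilde Q:=Q\circ \Omega\colon W^*_\red(\ell^\infty(G,\cM))\to W^*_\red(\cM)$. The remaining task is to extract a bundle-level conditional expectation $P\colon \ell^\infty(G,\cM)\to \cM$ from $\tilde Q$. To this end I would exploit the canonical $\widehat G$-coaction on reduced cross-sectional W$^*$-algebras, whose spectral subspaces recover the bundle fibers; after (if necessary) averaging $\tilde Q$ against the Haar state of $\widehat G$ composed with the coaction to enforce coaction equivariance, the averaged $\tilde Q$ sends the $t$-th spectral subspace of $W^*_\red(\ell^\infty(G,\cM))$ into the $t$-th spectral subspace of $W^*_\red(\cM)$, defining a well-behaved fiberwise map $P_t\colon \ell^\infty(G,M_t)\to M_t$. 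The $W^*_\red(\cM)$-bimodularity of $\tilde Q$ then translates into the Exel-style CE axioms for $P=(P_t)_{t\in G}$.

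For (b) $\Rightarrow$ (a), I would invoke Proposition \ref{prop: ell infty M is amenable}, which asserts that $\ell^\infty(G,\cM)$ is always W$^*$-amenable; in particular its central partial action admits a witnessing net $\{f_i\}$ of finitely supported positive central sections into $\ell^\infty(G,Z(M_e))$. The unit-fiber restriction $P_e\colon \ell^\infty(G,M_e)\to M_e$ of the bundle CE is a unital CE carrying centers to centers, and the bundle bimodule property of $P$ ensures that $P_e$ intertwines the respective central partial actions. Pushing $\{f_i\}$ through $P_e$ and controlling positivity and norm bounds via the Kadison--Schwarz inequality for ccp maps yields a witnessing net for the central partial action $\sigma$ of $\cM$, establishing W$^*$-amenability of $\cM$.

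The principal obstacle is the step (c) $\Rightarrow$ (b): extracting a fiberwise decomposition from an algebra-level conditional expectation is delicate. The key technical point is verifying (or enforcing via Haar averaging) the $\widehat G$-coaction equivariance of $\tilde Q$. Equivariance should be traceable to the explicit form of $\Omega$ via the unitary $U$ constructed in the proof of Theorem \ref{thm: iso algebra of kernels}, but a careful verification is needed to ensure that the resulting $P_t$ assemble into a bona fide bundle conditional expectation.
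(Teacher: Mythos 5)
Your overall skeleton (the cycle (a)$\Rightarrow$(c)$\Rightarrow$(b)$\Rightarrow$(a)) and your (a)$\Rightarrow$(c) step coincide with the paper: Theorem~\ref{thm: Mh amenble cond exp to fixed point algebra} with $H=G$ plus Theorem~\ref{thm: W subalgebra}. The problems are in the other two implications. For (c)$\Rightarrow$(b), the paper needs no equivariance and no averaging at all: after transporting $Q$ through Theorem~\ref{thm: iso algebra of kernels} to a conditional expectation $W^*_\red(\ell^\infty(G,\cM))\to W^*_\red(\cM)$, it composes with the Fourier-coefficient maps $E_t\colon W^*_\red(\cM)\to M_t$, $E_t(T)=T_{t,e}$, and defines $P|_{\ell^\infty(G,M_t)}:=E_t\circ Q$; Exel's axioms \cite[Definition~21.19]{ExlibroAMS} then follow from the $W^*_\red(\cM)$-bimodularity of $Q$ by a direct matrix computation. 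Your proposed route -- forcing $\widehat G$-coaction equivariance of $\tilde Q$ by averaging against the Haar state of $\widehat G$ and then restricting to spectral subspaces -- is left entirely unjustified, and it is genuinely delicate: $Q$ need not be normal, and the natural averaging formulas for coactions involve slice maps or $\tilde Q\otimes\id$ on von Neumann tensor products, which are not available for non-normal ucp maps. Since you yourself flag this as the crux and do not supply the construction, this step is a gap as written; note that the fix is precisely the $E_t\circ Q$ device above, which produces exactly the ``diagonal part'' your averaging was meant to extract, with no equivariance hypothesis needed.

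The implication (b)$\Rightarrow$(a) also has a hole at its final step. Your preliminary observations are fine: by $M_e$-bimodularity $P_e$ maps $Z(\ell^\infty(G,M_e))=\ell^\infty(G,Z)$ into $Z(M_e)$, and one can check it intertwines the central partial actions. But pushing a witnessing net $\{f_i\}$ through $P_e$ does not work: the defining condition $\wstar\lim_i\sum_{t} f_i(t)\,\tau_s(f_i(\smu t))=\tilde p$ is \emph{quadratic} in $f_i$, and a ccp map does not intertwine products. Kadison--Schwarz only gives the normalization $\sum_t P_e(f_i(t))^*P_e(f_i(t))\le p$; it gives no control whatsoever on $\sum_t P_e(f_i(t))\,\sigma_s(P_e(f_i(\smu t)))$, and there is no reason this should converge to the unit. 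The paper avoids this by deducing (b)$\Rightarrow$(a) from Proposition~\ref{prop: ell infty M is amenable} together with Theorem~6.7 and Corollary~7.9 of \cite{abadie2021amenability}, i.e.\ from already-established permanence of W$^*$-amenability under bundle conditional expectations; alternatively one can argue via Anantharaman-Delaroche's characterization of amenability through equivariant conditional expectations (as in \cite{ADaction1979}) rather than through witnessing nets. As written, your (b)$\Rightarrow$(a) does not go through.
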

\begin{proof}
   Using Proposition \ref{prop: ell infty M is amenable} and \cite[Theorem 6.7 \& Corollary 7.9]{abadie2021amenability} we get \eqref{item: cond exp ell infty} $\Rightarrow$ \eqref{item: M WAD amenable}.
   Combining Theorems \ref{thm: Mh amenble cond exp to fixed point algebra} and \ref{thm: W subalgebra} we obtain \eqref{item: M WAD amenable} $\Rightarrow$ \eqref{item: cond exp k}.
   To complete the proof we assume \eqref{item: cond exp k} and prove \eqref{item: cond exp ell infty} holds.
    To construct, for each $t\in G,$ a map $E_t\colon W^*_\red(\cM)\to M_t$ like that of \cite[Lemma 17.8]{ExlibroAMS} it is convenient to identify $W^*_\red(\cM)\equiv W^*_{\red\pi}(\cM)$ and to regard each fiber $M_t$ as a subset of $\bB(X)$ by identifying $m=\pi(m)$.
    We define $E_t\colon W^*_\red(\cM)\to M_t$ by $E_t(T) = T_{t,e}$.
    Note that $E_t$ is a linear contraction and that $E_t(T)^* =(T_{t,e})^* = {T^*}_{e,t}= {T^*}_{\tmu ,e} = E_\tmu(T^*).$
    Hence, $E_t(T)^* = E_\tmu(T^*).$
    
    Let $Q\colon \bk_\wstar(\cM)\to W^*_\red(\cM)$ be a conditional expectation which, using Theorem \ref{thm: iso algebra of kernels}, we view as a conditional expectation $Q\colon W^*_\red(\ell^\infty(G,\cM))\to W^*_\red(\cM).$
    Define $P\colon \ell^\infty(G,\cM)\to \cM$ as the map sending $f\in \ell^\infty(G,M_t)$ to $E_t(Q(f)).$
    In other words, $P$ is the disjoint union of the family $\{E_t\circ Q\colon \ell^\infty(G,M_t)\to M_t\}_{t\in G}.$
    All we need to do now is to check $P$ satisfies the conditions stated in \cite[Definition 21.19]{ExlibroAMS}.

    Clearly, $E_t\circ Q$ is an idempotent linear contraction for all $t\in G.$
    In addition, $E_e\circ Q\colon \ell^\infty(G,M_e)\to M_e$ is a conditional expectation in the usual sense.
    For every $b\in \ell^\infty(G,M_t)$ we have $(E_t\circ Q (b))^* =E_\tmu(Q(b)^*) =E_\tmu (Q(b^*))=E_\tmu\circ Q(b^*).$
    If $a\in M_s,$ then 
    \begin{align*}
        E_{ts}\circ Q (ba) &= (Q(ba))_{ts,e}=(Q(b)a)_{ts,e}
        = \sum_{r\in G} Q(b)_{ts,r}(\lambda_s\otimes a)_{r,e}
        = Q(b)_{ts,s} a\\
        & = Q(b)_{t,e}a= E_t\circ Q(b)a
    \end{align*}
    and it follows that $E_{ts}\circ Q (ba) = E_t\circ Q(b)a.$
    Consequently, 
    \begin{equation*}
        E_{st}\circ Q(ab) = (E_{\tmu \smu}\circ Q(b^*a^*))^* = (E_\tmu\circ Q(b^*) a^*)^* = aE_t\circ Q(b),
    \end{equation*}
    which completes the verification of \cite[Definition 21.19]{ExlibroAMS}.
\end{proof}

Imitating \cite[Corollaire 4.2]{ADaction1979} we obtain

\begin{corollary}[part of {\cite[Proposition 7.1]{abadie2021amenability}}]\label{cor: injectivity and amenability}
    The following are equivalent:
    \begin{enumerate}
        \item $M_e$ in injective and $\cM$ is W$^*$-amenable.
        \item $W^*_\red(\cM)$ is injective.
    \end{enumerate}
\end{corollary}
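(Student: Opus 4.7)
The approach rests on Theorem \ref{thm: new characterization}, which identifies W*-amenability of $\cM$ with the existence of a conditional expectation $\bk_\wstar(\cM)\to W^*_\red(\cM)$, combined with the standard fact that a W*-algebra $N\sbe \bB(H)$ is injective precisely when there is a conditional expectation $\bB(H)\to N$. A supporting ingredient is the canonical ``Fourier expectation'' $E_M\colon W^*_\red(\cM)\to \pi\lambda(M_e)$, $E_M(T)=T_{e,e}\otimes 1$, which is well-defined and a conditional expectation because Theorem \ref{thm: W subalgebra} describes $W^*_\red(\cM)$ as those $T\in \bk_\wstar(\cM)$ whose matrix entries $T_{r,s}$ depend only on $rs^{-1}$, so in particular $T_{e,e}\in \pi(M_e)$.

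For $(2)\Rightarrow(1)$ I would start from a conditional expectation $F\colon \bB(X\otimes \ell^2(G))\to W^*_\red(\cM)$ furnished by the injectivity of $W^*_\red(\cM)$. Its restriction to $\bk_\wstar(\cM)\supseteq W^*_\red(\cM)$ is again a conditional expectation onto $W^*_\red(\cM)$, so $\cM$ is W*-amenable by Theorem \ref{thm: new characterization}. Composing $F$ with $E_M$ yields a conditional expectation $\bB(X\otimes \ell^2(G))\to \pi\lambda(M_e)$; since $\pi\lambda\colon M_e\to \bB(X\otimes \ell^2(G))$ is a faithful normal representation of $M_e$, this witnesses the injectivity of $M_e$.

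For $(1)\Rightarrow(2)$, Theorem \ref{thm: new characterization} yields a conditional expectation $Q\colon \bk_\wstar(\cM)\to W^*_\red(\cM)$. The key reduction is to show that $M_e$ injective implies $\bk_\wstar(\cM)$ injective: granted this, a conditional expectation $P\colon \bB(X\otimes \ell^2(G))\to \bk_\wstar(\cM)$ composes with $Q$ to produce a conditional expectation onto $W^*_\red(\cM)$, proving the injectivity of $W^*_\red(\cM)$.

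The main obstacle is this injectivity transfer $M_e\rightsquigarrow\bk_\wstar(\cM)$. I plan to deduce it via W*-Morita equivalence. In the unital setting, $\ell^2_\wstar(\cM)$ is a full right Hilbert W*-module over $M_e$ (for any $m\in M_e\sbe \ell^2_\wstar(\cM)$, $\langle m,m\rangle = m^*m$, so the range of the inner product $\wstar$-generates $M_e$), and the $*$-homomorphism $\Psi$ of Lemma \ref{lem: the homomorphism Psi} together with Theorem \ref{thm: concrete description of algebra of kernels} realises $\bk_\wstar(\cM)$ as $\bB(\ell^2_\wstar(\cM))$. Since the algebra of adjointables of a full Hilbert W*-module over $M_e$ is W*-Morita equivalent to $M_e$, and since injectivity is preserved under W*-Morita equivalence, the transfer follows. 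This is the Fell-bundle analogue of the classical fact that $M\,\bar\otimes\,\bB(\ell^2(G))$ is injective iff $M$ is, which recovers the semidirect-product case through Corollary \ref{cor:kernel and cross sectional algebra for Wstar semidirect product bundle}.
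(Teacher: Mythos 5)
Your direction (2)$\Rightarrow$(1) is essentially the paper's argument (restrict a conditional expectation $\bB(X\otimes\ell^2(G))\to W^*_\red(\cM)$ to $\bk_\wstar(\cM)$ and invoke Theorem~\ref{thm: new characterization}; use the canonical expectation onto $M_e$, which you make explicit as $T\mapsto T_{e,e}$), and the skeleton of your (1)$\Rightarrow$(2) -- compose an expectation $\bB(X\otimes\ell^2(G))\to\bk_\wstar(\cM)$ with the $Q$ from Theorem~\ref{thm: new characterization} -- is also the paper's. The paper, however, simply quotes the proof of \cite[Proposition~7.1]{abadie2021amenability} for the step ``$M_e$ injective $\Rightarrow \bk_\wstar(\cM)$ injective'', whereas you try to prove it, and your argument for it has a genuine gap.

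The gap is the claimed identification $\bk_\wstar(\cM)\cong\bB(\ell^2_\wstar(\cM))$. Lemma~\ref{lem: the homomorphism Psi} only provides a normal $*$-homomorphism $\Psi\colon\bk_\wstar^\pi(\cM)\to\bB(\ell^2_\wstar(\cM))$, and Theorem~\ref{thm: concrete description of algebra of kernels} identifies $\bk_\wstar(\cM)$ with $\bk_\wstar^\pi(\cM)\sbe\bB(X\otimes\ell^2(G))$, not with $\bB(\ell^2_\wstar(\cM))$; in general $\Psi$ is neither injective nor an isomorphism onto $\bB(\ell^2_\wstar(\cM))$ (indeed, the detour through $\ell^\infty(G,\bB(\ell^2_\wstar(\cM)))$ via $\kappa$ in that theorem is needed precisely because acting on $\ell^2_\wstar(\cM)$ alone loses information). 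A concrete counterexample: take $G=\Z/2\Z=\{e,g\}$, $M_e$ any von Neumann algebra and $M_g=\{0\}$. Then every kernel is supported on the diagonal, so $\bk_\wstar(\cM)\cong M_e\oplus M_e$, while $\ell^2_\wstar(\cM)\cong M_e$ as a right $M_e$-module and $\bB(\ell^2_\wstar(\cM))\cong M_e$; the map $\Psi$ kills the summand indexed by $g$. So $\bk_\wstar(\cM)$ is not W*-Morita equivalent to $M_e$ via $\ell^2_\wstar(\cM)$ in the way you propose (your Morita machinery itself -- fullness, self-duality, preservation of injectivity -- is fine, and the identification does hold for semidirect product bundles as in Corollary~\ref{cor:kernel and cross sectional algebra for Wstar semidirect product bundle}, but it fails for non-saturated bundles). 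The conclusion you need is nevertheless true; to close the gap you should either cite the proof of \cite[Proposition~7.1]{abadie2021amenability} as the paper does, or argue directly, e.g.\ by writing $\bk_\wstar(\cM)$ as the weak* closure of the increasing union of the von Neumann algebras $\bk_{\wstar F}(\cM)$, $F\in\cF$, and showing each of these is injective when $M_e$ is.
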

\begin{proof}
    Assume the first condition holds. 
    As noticed in the proof of \cite[Proposition 7.1]{abadie2021amenability}, $\bk_\wstar(\cM)$ is injective and Theorem \ref{thm: new characterization} gives a conditional expectation $Q\colon \bk_\wstar(\cM)\to W^*_\red(\cM),$ then $W^*_\red(\cM)$ is injective.
    Conversely, if $W^*_\red(\cM)$ is injective, then $M_e$ is injective because there exists a canonical conditional expectation $W^*_\red(\cM)\to M_e.$
    Using $\pi\colon \cM\to \bB(X)$ we may think $W^*_\red(\cM)\equiv W^*_{\red\pi}(\cM)\sbe \bk_\wstar^\pi(\cM)\equiv \bk_\wstar(\cM)\sbe \bB(\ell^2(G,X)).$
    Since $W^*_\red(\cM)$ is injective, there exists a conditional expectation $\bB(X\otimes\ell^2(G))\to W^*_\red(\cM)$ which we can restrict to get a conditional expectation $\bk_\wstar(\cM)\to  W^*_\red(\cM).$
    Then Theorem \ref{thm: new characterization} implies $\cM$ is W$^*$-amenable.
\end{proof}

\begin{theorem}\label{thm: M amena iff N and Mh amenable}
    If $H$ is a normal subgroup of $G$, then the following are equivalent
    \begin{enumerate}
        \item $\cM$ is W$^*$-amenable.
        \item $\cM_H$ and $\dot{\cM}$ are W$^*$-amenable.
    \end{enumerate}
\end{theorem}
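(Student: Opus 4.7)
The plan is to reduce both implications to the conditional expectation characterization of W$^*$-amenability given by Theorem~\ref{thm: new characterization}, combined with the structural identifications of Section~\ref{sec:partial-Fellbundles}. Throughout, I shall use that by Theorem~\ref{thm: identificacion of kernels of WMH} the isomorphism $\mu\colon \bk_\wstar(\dot\cM)\congto \bk_\wstar(\cM)^H$ identifies $W^*_\red(\dot\cM)$ with $W^*_\red(\cM)$, and that by Theorem~\ref{thm: W subalgebra} the latter algebra is contained in $\bk_\wstar(\cM)^H$.

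For $(1)\Rightarrow(2)$ I would handle the two bundles separately. To see that $\dot\cM$ is W$^*$-amenable, start from a conditional expectation $Q\colon \bk_\wstar(\cM)\to W^*_\red(\cM)$ supplied by Theorem~\ref{thm: new characterization}. Its restriction to the intermediate W$^*$-algebra $\bk_\wstar(\cM)^H\supseteq W^*_\red(\cM)$ is a conditional expectation onto $W^*_\red(\cM)$, which via $\mu^{-1}$ becomes a conditional expectation $\bk_\wstar(\dot\cM)\to W^*_\red(\dot\cM)$, giving W$^*$-amenability of $\dot\cM$ by Theorem~\ref{thm: new characterization}. For $\cM_H$ I would use instead the $\ell^\infty$-version of the characterization. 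Fix a transversal $\{q_u\}_{u\in\dot G}$ with $q_H=e$ and define the fiberwise map
\[
\iota\colon \ell^\infty(H,\cM_H)\to \ell^\infty(G,\cM),\qquad \iota(f)(hq_u):=f(h),
\]
for $f\in \ell^\infty(H,M_t)$, $h\in H$ and $u\in\dot G$. Using the explicit formulas $(fg)(t)=f(t)g(\rmu t)$ and $f^*(t)=f(rt)^*$ defining the Fell bundle structure of $\ell^\infty(G,\cM)$, one checks that $\iota$ is fiberwise isometric, multiplicative and involutive, and that it restricts to the canonical inclusion on constant sections $\cM_H\sbe \ell^\infty(H,\cM_H)$. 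Then $P\circ\iota\colon \ell^\infty(H,\cM_H)\to \cM_H$, with $P$ a conditional expectation as in Theorem~\ref{thm: new characterization}, is a Fell bundle conditional expectation in Exel's sense, and a final application of Theorem~\ref{thm: new characterization} delivers W$^*$-amenability of $\cM_H$.

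For $(2)\Rightarrow(1)$ I would compose two conditional expectations. W$^*$-amenability of $\cM_H$ produces, via Theorem~\ref{thm: Mh amenble cond exp to fixed point algebra}, a conditional expectation $E_1\colon \bk_\wstar(\cM)\to \bk_\wstar(\cM)^H$. W$^*$-amenability of $\dot\cM$ produces, via Theorem~\ref{thm: new characterization} and the identification from Theorem~\ref{thm: identificacion of kernels of WMH}, a conditional expectation $E_2\colon \bk_\wstar(\cM)^H\to W^*_\red(\cM)$. The composite $E_2\circ E_1\colon \bk_\wstar(\cM)\to W^*_\red(\cM)$ is a conditional expectation, and Theorem~\ref{thm: new characterization} concludes that $\cM$ is W$^*$-amenable. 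The main technical obstacle lies in constructing $\iota$ in the forward direction and verifying that $P\circ\iota$ satisfies Exel's axioms for a Fell bundle conditional expectation (fiberwise contractivity, idempotence on $\cM_H$, bimodularity, and the involution identity $(P\circ\iota)_t(f)^*=(P\circ\iota)_{\tmu}(f^*)$); these verifications parallel those performed in the proof of the implication $(c)\Rightarrow(b)$ of Theorem~\ref{thm: new characterization} and are mechanical, though somewhat tedious.
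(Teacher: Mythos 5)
Your proposal is correct, and two of its three pieces coincide with the paper's proof: the W$^*$-amenability of $\dot\cM$ in $(1)\Rightarrow(2)$ is obtained exactly as in the paper (restrict $Q\colon \bk_\wstar(\cM)\to W^*_\red(\cM)$ to the intermediate algebra $\bk_\wstar(\cM)^H$ and transport it through the isomorphism of Theorem~\ref{thm: identificacion of kernels of WMH}), and the implication $(2)\Rightarrow(1)$ is the same composition $Q\circ P$ of the expectations coming from Theorem~\ref{thm: Mh amenble cond exp to fixed point algebra} and from Theorem~\ref{thm: new characterization} via Theorem~\ref{thm: identificacion of kernels of WMH}. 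Where you genuinely diverge is the W$^*$-amenability of $\cM_H$: the paper imports the argument of \cite[pp.~199]{abadie2021amenability}, working with the central partial action $\sigma$, its enveloping action $\tau$ on $N$, the restriction $\tau|_H$, and the results of Anantharaman-Delaroche (\cite[Proposition~2.6]{ADactionII1982}, \cite[Proposition~3.8]{ADaction1979}) to show that the enveloping action of the central partial action of $\cM_H$ is amenable. You instead stay entirely inside the conditional-expectation picture: the fiberwise map $\iota\colon \ell^\infty(H,\cM_H)\to\ell^\infty(G,\cM)$, $\iota(f)(hq_u)=f(h)$, is indeed well defined (unique decomposition $g=hq_u$, $h\in H$), fiberwise isometric, multiplicative and involutive --- for $f\in\ell^\infty(H,M_r)$, $g\in\ell^\infty(H,M_s)$ with $r,s\in H$ one has $\iota(f)\iota(g)(hq_u)=f(h)g(r^{-1}h)=\iota(fg)(hq_u)$ and $\iota(f)^*(hq_u)=f(rh)^*=\iota(f^*)(hq_u)$ --- and it sends constant sections to constant sections, so $P\circ\iota$ inherits Exel's axioms from those of $P$ (the unit-fiber map is a norm-one projection onto $M_e$, hence a conditional expectation by Tomiyama, and bimodularity and the involution identity follow from multiplicativity of $\iota$ together with the corresponding properties of $P$). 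This is in effect the classical Anantharaman-Delaroche subgroup argument ($\ell^\infty(H,M)\into\ell^\infty(G,M)$ along a coset transversal) adapted to bundles; what it buys is a self-contained proof that bypasses the enveloping-action machinery and, incidentally, does not use normality of $H$ for that step (a right-coset transversal suffices), in line with the more general restriction result stated in the introduction. Your application of Theorem~\ref{thm: new characterization} to $\cM_H$ over $H$ creates no circularity, since that theorem's proof is independent of the statement at hand.
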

\begin{proof}
    Assume $\cM$ is amenable.
    In \cite[pp 199]{abadie2021amenability} it is shown that $\cM_H$ is W$^*$-amenable.
    The argumentation goes as follows.
    Let $Z,\ \tilde{Z},\ N,\ \sigma,\ \tilde{\tau}$ and $\tau$ be as in the proof of Theorem \ref{thm: Mh amenble cond exp to fixed point algebra}.
    By \cite[Definition 5.16]{abadie2021amenability}, $\tau$ is amenable and \cite[Proposition 2.6]{ADactionII1982} implies $\tau|_H$ is amenable.
    Using the unit $\tilde{p}$ of $\tilde{Z}$ we can construct an $H$-equivariant conditional expectation $P\colon N\to \tilde{Z},$ $P(n)=\tilde{p}n;$ and we can use \cite[Proposition 3.8]{ADaction1979} to deduce that $\tilde{\tau}$ is amenable. Since this last action is the W*-enveloping action of the W*-partial action of $H$ on $Z(M_e)$ defined by $\cM_H$, $\cM_H$ is W$^*$-amenable (by definition).

   Theorem \ref{thm: new characterization} gives a conditional expectation $Q\colon \bk_\wstar(\cM)\to W^*_\red(\cM)$ that we can restric to get a conditional expectation $Q^H\colon \bk_\wstar(\cM)^H\to  W^*_\red(\cM).$
   Conjugating $Q^H$ using the isomorphism of Proposition \ref{thm: identificacion of kernels of WMH} we get a conditional expectation $\bk_\wstar(\dot{\cM})\to W^*_\red(\dot{\cM})$.
   Thus, Theorem \ref{thm: new characterization} implies $\dot{\cM}$ is W$^*$-amenable.

   To prove the converse we combine Theorem \ref{thm: Mh amenble cond exp to fixed point algebra}, Proposition \ref{thm: identificacion of kernels of WMH} and Theorem \ref{thm: new characterization} to get conditional expectations $P\colon  \bk_\wstar(\cM)\to \bk_\wstar(\cM)^H$ and $Q\colon \bk_\wstar(\cM)^H\to W^*_\red(\cM)$ that give a conditional expectation $Q\circ P\colon  \bk_\wstar(\cM)\to W^*_\red(\cM)$.
\end{proof}

\section{Fell bundles and group coactions}\label{sec:coactions}

In this section, we reinterpret our results in the framework of group coactions by examining the relationship between Fell bundles and the corresponding group coactions. While this connection is well established in the setting of \cstar {}algebras, our aim here is to extend these ideas to the context of von Neumann algebras.

Let $G$ be a discrete group.\footnote{The theory of group coactions extends naturally to locally compact groups, see \cite{echterhoff2006categorical}. Since we only use discrete groups in this paper, we restrict to this case to avoid complications.} Recall that a coaction of $G$ on a \cstar {}algebra $A$ is a $*$-homomorphism
$$\delta\colon A\to A\otimes C^*(G)$$
satisfying $(\delta\otimes\id)\circ\delta=(\id\otimes\delta_G)\circ\delta$, where $\delta_G\colon C^*(G)\to C^*(G)\otimes C^*(G)$ is the usual comultiplication: $\delta_G(t)=t\otimes t$ for $t\in G$. Moreover, as usual, we assume that $\delta$ is \emph{nondegenerate} in the sense that $\cspn\, \delta(A)(1\otimes C^*(G))=A\otimes C^*(G)$. This is equivalent to $(\epsilon\otimes\id)\circ\delta=\id_A$, where $\epsilon \colon C^*(G)\to \C$ is the trivial representation: $\epsilon(t)=1$ for all $t\in G$. In particular, every coaction is automatically an injective homomorphism.

The coaction $\delta$ is called \emph{normal} if $\delta^\lambda:=(\id\otimes\lambda)\circ\delta\colon A\to A\otimes C^*_\red(G)$ is injective. In this case $\delta^\lambda$ may be viewed as a ``reduced coaction'' of $G$, see \cite{echterhoff2006categorical} and references there in.

Given a coaction $\delta\colon A\to A\otimes C^*(G)$, its \emph{crossed product} is defined as
$$A\rtimes_\delta\dual G:=\cspn\, \delta^\lambda(A)(1\otimes M(C_0(G)))\sbe \bB(A\otimes \K(\ell^2(G)))=\bB(A\otimes\ell^2(G)),$$
where $M\colon C_0(G)\to \bB(\ell^2(G))$ denotes the representation by multiplication operators: $M_f(\xi)=f\cdot \xi$.

The crossed product $A\rtimes_\delta\dual G$ carries a canonical $G$-action, called the dual action, and denoted by $\dual\delta$; it is given on generators by $\dual\delta_t(\delta^\lambda(a)(1\otimes M_f))=\delta^\lambda(a)(1\otimes M_{\rt_t(f)})$, where $\rt_t(f)(s):=f(st)$ denotes the right translation $G$-action on $C_0(G)$.

The coaction $\delta$ is called \emph{maximal} if the canonical homomorphism
$$\Phi_\delta\colon A\rtimes_\delta\dual G\rtimes_{\dual\delta}G\onto A\otimes\K(\ell^2(G))$$
is injective (and hence an isomorphism). Equivalently, $\delta$ is maximal if and only if it lifts to a (necessarily) injective homomorphism $\delta_{\max}\colon A\to A\otimes_{\max} C^*(G)$, see 
\cite[{Theorem~5.1}]{BssEffMaximality}. 

Given a coaction $\delta\colon A\to A\otimes C^*(G)$, its spectral subspaces
$$A_t:=\{a\in A: \delta(a)=a\otimes t\}$$
form a Fell bundle $\A=(A_t)_{t\in G}$ concretely represented in $A$.
We call $\A$ the \emph{spectral Fell bundle} of $\delta$.
Moreover, the algebraic direct sum $\oplus_{t\in G}^\mathrm{alg} A_t$ is a (norm) dense $*$-subalgebra of $A$, canonically isomorphic to the cross-sectional $*$-algebra $C_c(\A)$ consisting of finitely supported sections $G\to \A$.

Hence each coaction $\delta$ of $G$ on $A$ yields a $G$-grading for the \cstar {}algebra $A$. This is a \emph{topological grading} in the sense that we have projections $E_t\colon A\to A_t$; these are given by $E_t(a):=(\id\otimes\varphi_t)(\delta(a))$, where $\varphi_t\colon C^*(G)\to \C$ is the bounded linear functional specified by $\varphi_t(s)=\delta_{s,t}$ for $s,t\in G$. Notice that $\varphi_e$ is just the canonical trace on $C^*(G)$. The functional $\varphi_t$ is given by $\varphi_t(x)=\varphi_e(t^{-1}x)$. The functional $\varphi_t$ factors through $C^*_\red(G)$, where it becomes the vector functional $x\mapsto \braket{\delta_t}{x(\delta_e)}$.

The following result is well known:

\begin{theorem}\label{theo:equivalence-Fell-bundles-coactions}
Given a Fell bundle $\A=(A_t)_{t\in G}$ over $G$, its full cross-sectional \cstar {}algebra $C^*(\A)$ carries a maximal coaction $\delta_\A\colon C^*(\A)\to C^*(\A)\otimes C^*(G)$ satisfying $\delta_\A(a_t)=a_t\otimes t$ for $a_t\in A_t$. The spectral Fell bundle of $\delta_\A$ is canonically isomorphic to $\A$. Every maximal coaction is isomorphic to one of this form for an essentially unique Fell bundle, namely, the spectral Fell bundle of the coaction.

The reduced cross-sectional \cstar {}algebra $C^*_\red(\A)$ carries a normal coaction $\delta_\A^\red\colon C^*_\red(\A)\to C^*_\red(\A)\otimes C^*(G)$ satisfying $\delta_\A^\red(a_t)=a_t\otimes t$ for $a_t\in A_t$. Every normal coaction is isomorphic to one of this form via the associated spectral Fell bundle.
\end{theorem}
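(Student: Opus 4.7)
The plan is to construct $\delta_\A$ directly from the universal property of $C^*(\A)$ and verify every claim on the dense $*$-subalgebra $C_c(\A)\subset C^*(\A)$ before extending by continuity. First, I observe that the assignment $\A\to C^*(\A)\otimes_{\max}C^*(G)$, $a\in A_t\mapsto a\otimes t$, is a Fell-bundle representation, since $(a\otimes t)(b\otimes s)=ab\otimes ts$ and $(a\otimes t)^*=a^*\otimes t^{-1}$ match the operations of $\A$. By the universal property of the full cross-sectional algebra, this integrates to a $*$-homomorphism
$$\delta_{\A,\max}\colon C^*(\A)\to C^*(\A)\otimes_{\max}C^*(G),$$
and composing with the canonical quotient gives $\delta_\A\colon C^*(\A)\to C^*(\A)\otimes C^*(G)$. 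The coaction identity holds because $(\delta_\A\otimes\id)\delta_\A(a_t)=a_t\otimes t\otimes t=(\id\otimes\delta_G)\delta_\A(a_t)$, and nondegeneracy reduces to $(\epsilon\otimes\id)\delta_\A(a_t)=a_t$; both equalities propagate from $C_c(\A)$ to $C^*(\A)$. Maximality is witnessed by the lift $\delta_{\A,\max}$ itself, via \cite[Theorem 5.1]{BssEffMaximality}.

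Next I identify the spectral subspaces. By construction $A_t\subseteq\{a\in C^*(\A)\colon\delta_\A(a)=a\otimes t\}$. Conversely, applying $\id\otimes\varphi_s$ to $\delta_\A(a)=a\otimes t$ yields $E_s(a)=\delta_{s,t}a$; since $\{E_s\}_{s\in G}$ restricted to $C_c(\A)$ coincides with the algebraic grading projections and extends continuously to $C^*(\A)$ with ranges $A_s$, one concludes $a=E_t(a)\in A_t$. This identifies the spectral Fell bundle of $\delta_\A$ canonically with $\A$.

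For uniqueness among maximal coactions, consider a maximal coaction $\delta\colon A\to A\otimes C^*(G)$ with spectral bundle $\A=(A_t)$. The inclusions $A_t\into A$ form a Fell-bundle representation and hence induce a surjective $*$-homomorphism $\Psi\colon C^*(\A)\onto A$ intertwining $\delta_\A$ with $\delta$. The crucial point is injectivity of $\Psi$, for which I form the commuting square
\[\begin{CD}
C^*(\A)\rtimes_{\delta_\A}\dual G\rtimes_{\dual{\delta_\A}}G @>{\Psi\rtimes\id\rtimes\id}>> A\rtimes_\delta\dual G\rtimes_{\dual\delta}G \\
@V{\Phi_{\delta_\A}}VV @VV{\Phi_\delta}V \\
C^*(\A)\otimes\bK @>{\Psi\otimes\id}>> A\otimes\bK
\end{CD}\]
in which both vertical arrows are isomorphisms by maximality; since $\Psi\otimes\id_\bK$ is then a surjection between stabilizations, the rigidity provided by Imai--Takai duality forces $\Psi$ to be injective. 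The reduced/normal half of the theorem follows by the same scheme: the map $a_t\mapsto a_t\otimes\lambda_t$ is a representation of $\A$ into $C^*_\red(\A)\otimes C^*_\red(G)$, producing $\delta^\red_\A$; normality is automatic since $(\id\otimes\lambda)\delta^\red_\A$ coincides with this injective reduced map. Uniqueness of normal coactions with a prescribed spectral bundle is analogous, with the induced surjection $C^*_\red(\A)\to A$ descending from the full one and its kernel killed by the injectivity of the reduced coaction on $A$.

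The main obstacle I anticipate is precisely the injectivity of $\Psi$ in the maximal case: the bare Fell-bundle data only identifies the dense $*$-subalgebra $\bigoplus_{t\in G}^{\mathrm{alg}}A_t$, so one must genuinely invoke crossed-product duality to deduce that the two ambient C*-norms coincide. Everything else is a direct and essentially formal consequence of the universal property of $C^*(\A)$ together with continuity of the grading projections $E_t$.
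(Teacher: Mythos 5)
The paper offers no proof of this theorem at all---it is invoked as ``well known'', with the relevant sources (Quigg's discrete coactions, \cite{echterhoff2006categorical}, \cite{BssEffMaximality}) only cited in the surrounding text---so your attempt must be judged against the standard arguments rather than against a proof in the paper. The constructive half of your proposal is correct and is indeed the usual route: integrating $a_t\mapsto a_t\otimes t$ through $C^*(\A)\otimes_{\max}C^*(G)$, deducing maximality of $\delta_\A$ from the lifting criterion of \cite{BssEffMaximality}, and recovering the fibres as the spectral subspaces via the slices $E_s=(\id\otimes\varphi_s)\circ\delta_\A$ (using, implicitly, the standard fact that each $A_t$ embeds isometrically, hence as a closed subspace, into $C^*(\A)$).

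The classification half, however, has a genuine gap exactly where you flag it: injectivity of $\Psi\colon C^*(\A)\onto A$. From your square you only know that $\Psi\otimes\id_\K$ is surjective and that the two vertical maps are isomorphisms; ``rigidity provided by Imai--Takai duality'' is not an argument, since a surjection between stabilizations is in general far from injective. The missing ingredient of the standard proof is that the \emph{top} horizontal arrow is an isomorphism: for discrete $G$ the crossed product $A\rtimes_\delta\dual G$ is canonically isomorphic to the algebra of kernels $\bk(\A)$ of the spectral bundle, via $\delta^\lambda(a)(1\otimes M_f)\mapsto k_{a,f}(s,t)=E_{st^{-1}}(a)f(t)$ (the \cstar analogue of Proposition~\ref{pro:equivariant-iso-dual-kernel}, recorded without proof in the last remark of Section~\ref{sec:coactions}), and this identification is natural in the coaction, so $\Psi\rtimes\id$ and hence $(\Psi\rtimes\id)\rtimes\id$ are isomorphisms; only then does the square force $\Psi\otimes\id_\K$, and with it $\Psi$, to be injective. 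Without this input (or an equivalent citation of the maximalization results of Echterhoff--Kaliszewski--Quigg or \cite{BssEffMaximality}), the third sentence of the theorem is unproved. The normal half is also too quick on two counts: a Fell-bundle representation integrates over the \emph{full} algebra, so $a_t\mapsto a_t\otimes\lambda_t$ does not by itself define a map on $C^*_\red(\A)$ (one needs a unitary implementation/Fell-absorption argument, plus the standard twist to land in $C^*_\red(\A)\otimes C^*(G)$ rather than $C^*_\red(\A)\otimes C^*_\red(G)$, as the statement requires); and the canonical comparison map goes $A\to C^*_\red(\A)$, not ``$C^*_\red(\A)\to A$ descending from the full one'', its injectivity for normal $\delta$ coming from the fact that the Fourier coefficients $E_t$ separate points of $A$ because slices by the functionals $\varphi_t$ separate points of $A\otimes C^*_\red(G)$ and $\delta^\lambda$ is injective.
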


The above result yields an equivalence between the categories of Fell bundles over $G$ (with usual morphisms of Fell bundles over $G$) and the category of maximal (resp. normal) coactions of $G$. A morphism between two $G$-coactions $(A,\delta)$ and $(A',\delta')$ is a just a $*$-homomorphism $\pi\colon A\to A'$ that is equivariant with respect to the coactions, meaning that $\delta'(\pi(a))=(\pi\otimes\id)(\delta(a))$ for all $a\in A$.

\subsection{Coactions on von Neumann algebras and W*-Fell bundles}

Next we consider coactions on von Neumann algebras: recall that a coaction of $G$ on a von Neumann algebra $N$ is an injective, normal and unital $*$-homomorphism $\delta\colon M\to M\bar\otimes W^*_\red(G)$ satisfying the coaction identity $(\delta\otimes\id)\circ \delta=(\id\otimes\Delta_G)\circ\delta$, where here $\Delta_G$ denotes the comultiplication on the group von Neumann algebra $W^*_\red(G)=C^*_\red(G)''\sbe \bB(\ell^2(G))$. Thus coactions on von Neumann algebras are analogues of reduced \cstar {}coactions. 

\begin{remark}\label{rem:coactions-vN-injective-ndeg}
For a coaction $\delta\colon M\to M\bar\otimes W^*_\red(G)$ as above, the linear span of elements of the form $\delta(m)(1\otimes\lambda_t)$ with $m\in M$ and $t\in G$ is weak*-dense in $M\bar\otimes W^*_\red(G)$. For coactions on \cstar {}algebras this is proved in \cite[Corollaire~7.15]{Baaj-Skandalis}; a similar proof applies to coactions on von Neumann algebras.
\end{remark}

The following is probably well known, but we could not find a reference, so we add a proof.

\begin{proposition}
Given a \Wstar coaction $\delta\colon M\to M\bar\otimes W^*_\red(G)$ as above, its spectral Fell bundle is a \Wstar Fell bundle $\M=(M_t)_{t\in G}$ that yields a $G$-grading for the von Neumann algebra $M$, in the sense the algebraic direct sum $\oplus_{t\in G}^{\mathrm{alg}}M_t$ is weak*-dense in $M$.
\end{proposition}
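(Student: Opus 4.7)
The plan is to split the argument into three parts: (i) establishing the \Wstar Fell bundle structure on the spectral subspaces, (ii) constructing Fourier coefficient maps, and (iii) deducing weak*-density via a separation argument.

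For (i), I would observe that each spectral subspace $M_t$ is weak*-closed: if $m_i \to m$ weak* in $M$ with $m_i \in M_t$, then normality of $\delta$ gives $\delta(m_i) \to \delta(m)$ weak* in $M\bar\otimes W^*_\red(G)$ while $m_i \otimes \lambda_t \to m\otimes \lambda_t$ weak* as well, so $\delta(m) = m\otimes \lambda_t$. Straightforward computations using multiplicativity and the $*$-preservation of $\delta$ then yield $M_sM_t \sbe M_{st}$ and $M_t^* \sbe M_{\tmu}$. In particular, $M_e$ is a unital von Neumann subalgebra of $M$, and each $M_t$ is a weak*-closed $M_e$-bimodule; taking $M_t$ with its inherited predual $M_*/M_t^\perp$, the remaining \Wstar Fell bundle axioms in the sense of \cite{abadie2021amenability} follow routinely from the corresponding properties of $M$.

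For (ii), I would introduce the Fourier coefficient maps $E_t\colon M\to M$ defined by $E_t(m):=(\id\otimes \varphi_t)(\delta(m))$, where $\varphi_t$ is the normal functional on $W^*_\red(G)$ given by $\varphi_t(x)=\braket{\delta_t}{x\delta_e}$. The key identity $(\id\otimes \varphi_t)\circ\Delta_G = \varphi_t(\cdot)\,\lambda_t$ as maps $W^*_\red(G)\to W^*_\red(G)$ holds on generators (both sides send $\lambda_s$ to $\delta_{s,t}\lambda_t$) and extends to all of $W^*_\red(G)$ by normality. Combining this with the coaction identity $(\delta\otimes\id)\circ\delta = (\id\otimes\Delta_G)\circ\delta$ gives
\[
\delta(E_t(m)) = (\id\otimes[(\id\otimes\varphi_t)\circ\Delta_G])(\delta(m)) = E_t(m)\otimes \lambda_t,
\]
so $E_t(M)\sbe M_t$. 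For (iii), I would argue by duality: suppose $\phi\in M_*$ vanishes on the algebraic sum $\bigoplus_t^{\mathrm{alg}} M_t$. Then $\phi(E_t(m)) = (\phi\otimes\varphi_t)(\delta(m))=0$ for all $t\in G$ and $m\in M$. The family $\{\varphi_t\}_{t\in G}$ separates points of $W^*_\red(G)$, since any $x\in W^*_\red(G)$ with $\braket{\delta_t}{x\delta_e}=0$ for all $t$ satisfies $x\delta_e=0$, and then commuting with the right regular representation $\rho$ (which lies in the commutant of $W^*_\red(G)$) forces $x\delta_s = x\rho_s\delta_e = \rho_s x\delta_e = 0$ for every $s$, whence $x=0$. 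Consequently the normal map $(\phi\otimes\id)\circ\delta\colon M\to W^*_\red(G)$ is identically zero. Multiplying by $1\otimes \lambda_t$ on the right and invoking Remark \ref{rem:coactions-vN-injective-ndeg} together with weak*-continuity of $\phi\otimes\id$, this forces $\phi\otimes\id \equiv 0$ on $M\bar\otimes W^*_\red(G)$; evaluating at $m\otimes 1$ yields $\phi(m)=0$ for all $m$, and a standard Hahn--Banach / bipolar argument gives the desired weak*-density.

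The main obstacle is the separation step: ensuring that the normal functionals $\{\varphi_t\}_{t\in G}$ jointly separate points of $W^*_\red(G)$, and then parlaying $(\phi\otimes\id)\circ\delta=0$ into $\phi=0$. Both pieces rest on the commutant description of $W^*_\red(G)$ in $\bB(\ell^2(G))$ and on the nondegeneracy of $\delta$ recalled in Remark \ref{rem:coactions-vN-injective-ndeg}; once these ingredients are in place, the remaining verifications (closure of $M_t$ under the bundle operations, intertwining properties of $E_t$, and the final density conclusion) are essentially mechanical.
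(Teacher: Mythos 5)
Your proposal is correct and follows essentially the same route as the paper: weak*-closedness of the spectral subspaces from normality of $\delta$, the Fourier coefficient maps $E_t=(\id\otimes\varphi_t)\circ\delta$, the fact that the vector functionals $\varphi_t$ separate points of $W^*_\red(G)$, and the nondegeneracy of $\delta$ (Remark~\ref{rem:coactions-vN-injective-ndeg}) to conclude that a normal functional vanishing on all $M_t$ is zero. You merely spell out a few steps the paper states without proof (that $E_t(M)\sbe M_t$ via the coaction identity, and the commutant argument for why the $\varphi_t$ separate points), so no substantive difference.
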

\begin{proof}
Since $\delta$ is normal, it is clear that $M_t$ is a weak*-closed subspace of $M$, so that $\M=(M_t)_{t\in G}$ is a \Wstar Fell bundle.
To see that $\oplus_{t\in G}^{\mathrm{alg}}M_t$ is weak*-dense in $M$, it is enough to prove that if $\omega\in M_*$ is such that $\omega(M_t)=0$ for all $t\in G$, then $\omega=0$. Fix $\omega$ with this property. Then $\omega(E_t(m))=0$ for all $m\in M$ and $t\in G$. Recall that $E_t(m)=(\id\otimes\varphi_t)(\delta(m))$, where $\varphi_t\in W^*_\red(G)_*=A(G)$ (the Fourier algebra) is the vector functional given by $\varphi_t(x)=\braket{\delta_t}{x(\delta_e)}$ for $x\in W^*_\red(G)$. These functionals separate the points: if $\varphi_t(x)=0$ for all $t\in G$, then $x=0$. But then, if $\omega(E_t(m))=0$ for all $t\in G$ and $m\in M$, we get $\varphi_t((\omega\otimes\id)(\delta(m)))=0$ for all $t\in G$, so that $(\omega\otimes\id)(\delta(m))=0$ for all $m\in M$. It follows that $(\omega\otimes\id)(\delta(m)(1\otimes\lambda_t))=0$ for all $m\in M$ and $t\in G$ and Remark~\ref{rem:coactions-vN-injective-ndeg} then implies that $\omega=0$, as desired. \end{proof}

The following result is the \Wstar analogue of Theorem~\ref{theo:equivalence-Fell-bundles-coactions} and yields an equivalence between the categories of \Wstar Fell bundles over $G$ and \Wstar coactions of $G$. 

\begin{theorem}\label{theo:spectral-Fell-bundle}
    Given a \Wstar Fell bundle $\M=(M_t)_{t\in G}$, its cross-sectional \Wstar algebra $W^*_\red(\M)$ carries a coaction $$\delta_\M\colon W^*_\red(\M)\to W^*_\red(\M)\bar\otimes W^*_\red(G)$$ satisfying $\delta_\M(m_t)=m_t\otimes\lambda_t$ for all $t\in G$ and $m_t\in M_t$.
    The spectral \Wstar Fell bundle of $\delta_\M$ is canonically isomorphic to $\M$. Moreover, every \Wstar coaction of $G$ is of this form for a unique \Wstar Fell bundle, up to natural isomorphism, namely its spectral Fell bundle.
\end{theorem}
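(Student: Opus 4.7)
The plan is to prove the three assertions in sequence: construct the coaction $\delta_\cM$, identify its spectral Fell bundle with $\cM$, and then show every \Wstar coaction arises this way.

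\textbf{Construction of $\delta_\cM$.} Using the fiberwise faithful normal representation $\pi\lambda$ from Section~\ref{sec: WFell bundles}, I realize $W^*_\red(\cM) \sbe \bB(X) \bar\otimes W^*_\red(G)$ with weak*-generators $\pi\lambda(m_t) = \pi(m_t) \otimes \lambda_t$ for $m_t \in M_t$. Consider the normal, injective $*$-homomorphism $\id_{\bB(X)} \bar\otimes \Delta_G \colon \bB(X) \bar\otimes W^*_\red(G) \to \bB(X) \bar\otimes W^*_\red(G) \bar\otimes W^*_\red(G)$ coming from the canonical comultiplication $\Delta_G(\lambda_t) = \lambda_t \otimes \lambda_t$. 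On a generator one has $(\id \bar\otimes \Delta_G)(\pi(m_t) \otimes \lambda_t) = \pi\lambda(m_t) \otimes \lambda_t \in W^*_\red(\cM) \bar\otimes W^*_\red(G)$, so by weak*-density and normality the restriction $\delta_\cM := (\id \bar\otimes \Delta_G)|_{W^*_\red(\cM)}$ has codomain $W^*_\red(\cM) \bar\otimes W^*_\red(G)$. Injectivity and normality are inherited from $\id \bar\otimes \Delta_G$, and the coaction identity reduces to coassociativity of $\Delta_G$.

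\textbf{Identification of the spectral bundle.} The inclusion $\pi\lambda(M_t) \sbe \{T \in W^*_\red(\cM) : \delta_\cM(T) = T \otimes \lambda_t\}$ is clear from construction. For the reverse, I introduce the normal slice projections $E_t := (\id \bar\otimes \varphi_t) \circ \delta_\cM$, where $\varphi_t \in W^*_\red(G)_*$ is the vector functional $\varphi_t(x) = \langle \delta_t, x\delta_e\rangle$ satisfying $\varphi_t(\lambda_s) = \delta_{s,t}$. On generators, $E_t(\pi\lambda(m_s)) = \delta_{s,t}\pi\lambda(m_s)$, so by normality and weak*-density of $\sum_{s\in G}\pi\lambda(M_s)$ in $W^*_\red(\cM)$ we get $E_t(W^*_\red(\cM)) \sbe \pi\lambda(M_t)$. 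If $\delta_\cM(T) = T \otimes \lambda_t$, then $E_t(T) = T$, and hence $T \in \pi\lambda(M_t)$. This yields the canonical Fell-bundle isomorphism between the spectral bundle of $\delta_\cM$ and $\cM$.

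\textbf{Universality.} Let $\delta \colon N \to N \bar\otimes W^*_\red(G)$ be an arbitrary \Wstar coaction with spectral Fell bundle $\cM = (M_t)_{t \in G}$, already shown to be a \Wstar Fell bundle with $\bigoplus_t^{\mathrm{alg}} M_t$ weak*-dense in $N$. Pick a faithful normal representation $\pi_0 \colon N \hookrightarrow \bB(Y)$ and use $\pi := \pi_0|_\cM$ as the fiberwise normal faithful representation realizing $W^*_\red(\cM) \sbe \bB(Y) \bar\otimes W^*_\red(G)$. Viewing $\delta$ as a map into $\bB(Y) \bar\otimes W^*_\red(G)$, for $m \in M_t$ we have $\delta(m) = m \otimes \lambda_t = \pi_0(m) \otimes \lambda_t = \pi\lambda(m)$, so $\delta(M_t) = \pi\lambda(M_t)$. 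Normality of $\delta$ combined with weak*-density of the spectral sum yields $\delta(N) \sbe W^*_\red(\cM)$; the reverse inclusion follows because $\delta(N)$ is weak*-closed (image of a normal injective $*$-homomorphism between \Wstar algebras) and contains every $\pi\lambda(M_t)$. Thus $\delta$ factors as a \Wstar isomorphism $N \cong W^*_\red(\cM)$ intertwining $\delta$ with $\delta_\cM$. Naturality of the uniqueness statement is automatic: any coaction-equivariant \Wstar isomorphism maps spectral subspace to spectral subspace, hence induces a canonical isomorphism of Fell bundles.

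\textbf{Main obstacle.} I expect the most delicate step to be the identification $\delta(N) = W^*_\red(\cM)$ in the universality argument. It requires carefully matching the concrete representation used to realize $W^*_\red(\cM)$ with the ambient algebra in which $\delta$ lives, then combining weak*-density of the algebraic sum of spectral subspaces with the fact that a normal injective image of a \Wstar algebra is weak*-closed. Everything else is either a transcription of the classical coaction/Fell-bundle correspondence into the normal category or a direct consequence of coassociativity and the Fourier slice maps.
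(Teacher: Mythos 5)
Your proof is correct, and while the first two thirds run parallel to the paper, the universality step takes a genuinely different route. For the construction of $\delta_\M$ the paper conjugates by the unitary $U$ on $\ell^2_\wstar(\M\times G)\cong\ell^2_\wstar(\M)\otimes\ell^2(G)$, $U\zeta(s,t)=\zeta(s,s^{-1}t)$; your restriction of $\id_{\bB(X)}\bar\otimes\Delta_G$ to the concrete algebra $W^*_{\red\pi}(\M)$ is the same mechanism (the comultiplication is itself spatially implemented by a unitary of exactly this shape), just transported to $X\otimes\ell^2(G)\otimes\ell^2(G)$; the only point worth recording is that the resulting coaction is independent of the chosen $\pi$, which follows because the identification $W^*_\red(\M)\cong W^*_{\red\pi}(\M)$ matches generators, so the formula $m_t\mapsto m_t\otimes\lambda_t$ pins it down canonically. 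Your identification of the spectral subspaces via the slice projections $E_t=(\id\bar\otimes\varphi_t)\circ\delta_\M$ is the argument the paper only sketches, and your write-up (normality of $E_t$, weak*-closedness of $\pi\lambda(M_t)$, $E_t(T)=T$ when $\delta_\M(T)=T\otimes\lambda_t$) is complete. Where you diverge is the converse: the paper builds the faithful normal conditional expectation $E=E_e$, forms the self-dual module $\ell^2_E(M)$, proves $\ell^2_E(M)\cong\ell^2_\wstar(\M)$, and reads off $M\cong W^*_\red(\M)$ from the resulting representation; you instead restrict an arbitrary faithful normal representation $\pi_0$ of $N$ to the spectral bundle, invoke the representation-independence of $W^*_\red(\M)$ (the paper's standing identification $W^*_\red(\M)\cong W^*_{\red\pi}(\M)$ from \cite[Theorem 5.7]{abadie2021amenability}), and compare $\delta(N)$ with $W^*_{\red\pi}(\M)$ inside $\bB(Y)\bar\otimes W^*_\red(G)$, using that a normal $*$-homomorphic image of a von Neumann algebra is weak*-closed. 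This buys you a shorter argument that avoids the module-theoretic detour entirely, at the cost of leaning harder on the cited representation-independence result; the paper's route has the advantage of producing the canonical module $\ell^2_E(M)\cong\ell^2_\wstar(\M)$, an identification of independent interest. Two points you assert without computation are routine but should be checked: that the isomorphism $\delta\colon N\to W^*_{\red\pi}(\M)$ intertwines $\delta$ with $\delta_\M$ (on a generator $m\in M_t$ both sides give $\pi\lambda(m)\otimes\lambda_t$, and one concludes by normality and weak*-density), and that your restriction $\pi=\pi_0|_\M$ satisfies the standing hypotheses on representations (unital, fiberwise isometric and weak*-continuous), which it does since $1_N\in M_e$ and $\pi_0$ is faithful and normal.
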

\begin{proof}
    The construction of the coaction $\delta_\M$ is already given in \cite[Section 5.5]{abadie2021amenability}. The basic idea is the same as the case of (reduced) discrete group coactions. We provide some of the main details. The coaction $\delta_\M$ has a canonical ``unitary implementation'' as follows: consider the Hilbert $M_e$-module $\ell^2_\wstar(\cM\times G)\cong \ell^2_\wstar(\cM)\otimes\ell^2(G)$, where $\M\times G$ denotes the \Wstar Fell bundle over $G\times G$ obtained as the pullback of $\M$ along the first coordinate projection $G\times G\to G$. We define a unitary $U$ on $\ell^2_\wstar(\M\times G)$ by the formula
    $$U(\zeta)(s,t):=\zeta(s,s^{-1}t)$$
    for a $\ell^2$-section $\zeta$ of $\M\times G$ and $s,t\in G$. It is easy to see that $W$ is, indeed, a unitary with $U^*(\zeta)(s,t)=\zeta(s,st)$.
    Moreover, using the canonical (normal) representation of $W^*_\red(\M)\bar\otimes W^*_\red(G)$ as operator on $\ell^2_\wstar(\cM\times G)$, a straightforward computation shows that $U(m_t\otimes 1)U^*=m_t\otimes \lambda_t$, so that the normal homomorphism $m\mapsto U(m\otimes 1)U^*$ yields the desired coaction $\delta_\M$ as in the statement. The spectral Fell bundle decomposition of $\delta_\M$ is canonically isomorphic to the original Fell bundle $\M$ because, by construction, $M_t$ is canonically embedded into the $t$-spectral subspace $W^*_\red(\M)_t$, and the projection $E_t$ gives the inverse map.

    Conversely, starting with a \Wstar coaction $\delta\colon M\to M\bar\otimes W^*_\red(G)$, its spectral Fell bundle $\M=(M_t)_{t\in G}$ is a \Wstar Fell bundle. Notice that $E:=E_e=(\id\otimes\varphi_e)\circ \delta \colon M\onto M_e$ is a faithful normal conditional expectation. From $E$ we obtain a \Wstar Hilbert $M_e$-module $\ell^2_E(M)$, defined as the self-dual completion of $M$ endowed with the $M_e$-valued inner product $\braket{m}{n}_E:=E(m^*n)$. And since $E$ is faithful, so is the canonical representation $\pi\colon M\to \bB(\ell^2_E(M))$. Now we notice that the canonical algebraic isomorphism $\oplus_{t\in G}^{\mathrm{alg}}M_t\to C_c(\M)$ extends to an isomorphism of \Wstar Hilbert $M_e$-modules $\ell^2_E(M)\cong \ell^2_\wstar(\M)$; this follows from the computation 
    $$\braket{m_s}{m_t}_E=E(m_s^*m_t)=\braket{m_s}{m_t}_{M_e}$$
    for $m_s\in M_s$ and $m_t\in Mt$. Indeed, except if $s=t$, the above inner products vanish, and if $s=t$, they equal $m_s^*m_t$ as $E$ is just the identity map on $M_e$. The representation $\pi$ then yields a unital normal injective $*$-homomorphism from $W^*_\red(\M)$ into a  $\bB(\ell^2_\wstar(\M)\otimes \ell^2(G))\cong \bB(\ell^2_E(M)\otimes\ell^2(G))$ sending $m_t\in M_t$ to $m_t\otimes \lambda_t$. This is just a manifestation of the coaction $\delta_\M$. On the other hand, $\delta$ identifies $M$ with its image in $M\otimes W^*_\red(G)\sbe \bB(\ell^2_E(\M)\otimes\ell^2(G))$. This implies that the algebraic isomorphism $\oplus_{t\in G}^{\mathrm{alg}}M_t\to C_c(\M)$ extends to an isomorphism of \Wstar algebras $M\cong W^*_\red(\M)$.
\end{proof}

Recall that the \Wstar{}crossed product of a coaction $\delta\colon M\to M\bar\otimes W^*_\red(G)$ is defined as 
$$M\rtimes_\delta G:=\cspn^\wstar\{\delta(a)(1\otimes f): a\in M, f\in C_0(G)\}\sbe M\bar\otimes\bB(\ell^2(G)),$$
where we view $M\bar\otimes W^*_\red(G)$ as a \cstar {}subalgebra of $M\bar\otimes \bB(\ell^2(G))$ and also embed $C_0(G)\into \bB(\ell^2(G))$ via the representation by multiplication operators. 

Let us also recall Proposition~5.28 and~Corollary 5.29 from \cite{abadie2021amenability}:

\begin{proposition}\label{pro:equivariant-iso-dual-kernel}
    Given a \Wstar{}Fell bundle $\M$ over $G$, there exists a canonical $G$-equivariant isomorphism
    $$\bk_{\wstar}(\M)\cong W^*_\red(\M)\bar\rtimes_{\delta_\M} G$$
    that identifies an element of the form $\delta_\M(a)(1\otimes f)\in W^*_\red(\M)\bar\rtimes_{\delta_\M} G$ with the kernel $k_{a,f}(s,t):=a(st^{-1})f(t)$ in $\bk_{\wstar}(\M)$. Taking \Wstar{}crossed products, this decends to an isomorphism:
    $$\bk_{\wstar}(\M)\bar\rtimes_{\beta^{\wstar}}G\cong W^*_\red(\M)\bar\otimes\bB(\ell^2(G)).$$
\end{proposition}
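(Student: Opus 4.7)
The plan is to realize both sides of the isomorphism concretely on a common Hilbert space, match them on a weak*-dense $*$-algebra of generators, and then deduce the second identity from Takesaki--Takai duality. Fix a fiberwise faithful, unital, normal representation $\pi\colon \cM\to \bB(X)$ as in Section~\ref{sec: WFell bundles}, so that $W^*_\red(\cM)=W^*_{\red\pi}(\cM)\sbe \bk_\wstar^\pi(\cM)=\bk_\wstar(\cM)\sbe \bB(X\otimes \ell^2(G))$ by Theorems~\ref{thm: concrete description of algebra of kernels} and~\ref{thm: W subalgebra}. By the construction of $\delta_\cM$ in Theorem~\ref{theo:spectral-Fell-bundle}, the coaction sends $m_t\in M_t$ to $\pi\lambda(m_t)\otimes \lambda_t$, so the crossed product $W^*_\red(\cM)\bar\rtimes_{\delta_\cM}G$ is, by definition, the weak*-closure in $\bB(X\otimes\ell^2(G)\otimes\ell^2(G))$ of the $*$-algebra generated by $\pi\lambda(m_t)\otimes\lambda_t$ (with $m_t\in M_t$) and $1\otimes M_f$ (with $f\in C_0(G)$).

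First I would define the candidate isomorphism on generators. A direct computation shows that $(\pi\lambda(m_t)\otimes\lambda_t)(1\otimes M_f)$ sends $x\otimes\delta_r\otimes\delta_s$ to $f(s)\pi(m_t)x\otimes\delta_{tr}\otimes\delta_{ts}$. There is a canonical unitary $V$ on $X\otimes\ell^2(G)\otimes\ell^2(G)$, built from the comultiplication of $\ell^2(G)$, that untangles the regular action on the last tensor factor from the diagonal; under $\mathrm{Ad}\,V$ the above operator takes the form $T\otimes 1$, where $T\in \bk_\wstar^\pi(\cM)$ has $(r,s)$-matrix entry $f(s)\pi(m_t)$ when $rs^{-1}=t$ and zero otherwise. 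In other words, $T=\pi(k_{a,f}(r,s))_{r,s\in G}$ for $a=\pi\lambda(m_t)$. Extending by linearity and weak*-continuity, I obtain a normal $*$-homomorphism $\Phi\colon W^*_\red(\cM)\bar\rtimes_{\delta_\cM}G\to \bk_\wstar(\cM)$ satisfying $\Phi(\delta_\cM(a)(1\otimes f))=k_{a,f}$.

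Next I would verify that $\Phi$ is an isomorphism. Multiplicativity and $*$-preservation are checked on generators and extended by normality; surjectivity follows because kernels of the form $k_{a,f}$ with $a$ spectral and $f$ a Dirac mass exhaust $\bk_c(\cM)$, which is weak*-dense in $\bk_\wstar(\cM)$; injectivity follows from the faithfulness of $\pi$ together with the fact that $\mathrm{Ad}\,V$ is a $*$-isomorphism. For $G$-equivariance, the dual action $\dual{\delta_\cM}_t$ acts on the $C_0(G)$-factor by right translation, so $\dual{\delta_\cM}_t(\delta_\cM(a)(1\otimes f))=\delta_\cM(a)(1\otimes \rt_t f)$; under $\Phi$ this becomes the kernel $(r,s)\mapsto a(rs^{-1})f(st)=k_{a,f}(rt,st)$, which is precisely $\beta^\wstar_t(k_{a,f})$.

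Finally, the second isomorphism follows from \Wstar-Takesaki--Takai duality for the normal coaction $\delta_\cM$: there is a canonical equivariant isomorphism $(W^*_\red(\cM)\bar\rtimes_{\delta_\cM}G)\bar\rtimes_{\dual{\delta_\cM}}G\cong W^*_\red(\cM)\bar\otimes\bB(\ell^2(G))$, and combining it with the $G$-equivariance of $\Phi$ produces the stated identification $\bk_\wstar(\cM)\bar\rtimes_{\beta^\wstar}G\cong W^*_\red(\cM)\bar\otimes\bB(\ell^2(G))$. The main obstacle I anticipate is the bookkeeping of the three copies of $\ell^2(G)$ and pinning down the correct twist $V$: the reduced coaction crossed product naturally lives on a Hilbert space with an extra copy of $\ell^2(G)$ that has to be absorbed into the kernel picture, and a misplaced left/right twist produces formulae that look correct on a single fiber but fail multiplicativity across two fibers.
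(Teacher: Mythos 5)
The paper offers no proof of this proposition to compare against: it is recalled verbatim from \cite{abadie2021amenability} (Proposition~5.28 and Corollary~5.29). Judged on its own, your argument is correct in outline, and it is in the same spirit as the unitary--conjugation arguments this paper does carry out explicitly (compare the unitary $U$ in the proof of Theorem~\ref{thm: iso algebra of kernels}).

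Two points deserve to be pinned down. First, the unitary you leave implicit can be written as $V(x\otimes\delta_s\otimes\delta_u):=x\otimes\delta_{su}\otimes\delta_s$ on $X\otimes\ell^2(G)\otimes\ell^2(G)$; one checks directly that $V(T\otimes 1)V^*=(\pi\lambda(m_t)\otimes\lambda_t)(1\otimes M_f)$, where $T\in\bk_\wstar^\pi(\cM)$ has entries $T_{r,s}=f(s)\pi(m_t)$ for $rs^{-1}=t$ and $0$ otherwise. The essential feature, which your sketch uses but should state, is that this single $V$ (independent of $t$, $m_t$, $f$) works for all generators simultaneously; then $\Ad V^*$ carries the von Neumann algebra they generate, namely $W^*_\red(\cM)\bar\rtimes_{\delta_\cM}G$, onto the von Neumann algebra generated by the $T\otimes 1$, which is $\bk_\wstar^\pi(\cM)\otimes\C 1$ because finite sums of such $T$ with $f$ a point mass give exactly $\Phi^\pi(\bk_c(\cM))$, whose $\wot$-closure is $\bk_\wstar^\pi(\cM)$. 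With this, injectivity and surjectivity are automatic (the map is a restriction of a spatial isomorphism followed by stripping the trivial leg), and your equivariance computation $\Phi(\delta_\cM(a)(1\otimes \rt_t f))=k_{a,\rt_t f}=\beta^\wstar_t(k_{a,f})$ is the right one; to get the stated identity $\Phi(\delta_\cM(a)(1\otimes f))=k_{a,f}$ for \emph{arbitrary} $a\in W^*_\red(\cM)$ (not just spectral elements) you should add one line noting that both sides are normal in $a$ and the matrix entries $a\mapsto E_r(a)$ are $\wstar$-continuous. Second, for the final display you invoke Takesaki--Takai duality; in the von Neumann setting this is crossed-product duality for coactions (Nakagami--Takesaki/Katayama type, available e.g.\ through the quantum-group framework of \cite{Vaes:Unitary}), and it should be cited as such, together with the remark that the first isomorphism is $\dual{\delta}_\cM$--$\beta^\wstar$ equivariant and therefore induces the isomorphism of the two $G$-crossed products. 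With these additions your proof is complete and matches the mechanism the cited reference (and this paper's related arguments) use.
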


Next we allow for normal subgroups $H\trianglelefteq G$ and discuss restricted coactions from $G$ to $\dot G $. In the general context of quantum groups, one can restrict every coaction to a quantum subgroup, see \cite{Vaes:New-approach}.\footnote{We would like to thank Stefaan Vaes for pointing out this to us.} For W*-coaction $\delta\colon M\to M\otimes W^*_\red(G)$, this corresponds to restricting $\delta$ from $G$ to a coaction 
$$\dot\delta\colon M\to M\bar\otimes W^*_\red(\dot G)$$ of the quotient $\dot G:=G/H $ by a normal subgroup $H\trianglelefteq G$. More precisely, this is the unique W*-coaction of $\dot G$ satisfying
$$(\dot\delta\otimes\id)\circ\delta =(\id\otimes\dot\delta_G)\circ\delta,$$
where $\dot\delta_G\colon W^*_\red(G)\to W^*_\red(G)\bar\otimes W^*_\red(\dot G)$ is the ``restriction'' of the comultiplication $\delta_G\colon W^*_\red(G)\to W^*_\red(G)\bar\otimes W^*_\red(G)$. It satisfies
$$\dot\delta_G(\lambda_t)=\lambda_t\otimes\dot\lambda_{\dot t},$$
where $\dot\lambda\colon \dot G\to \bB(\ell^2(\dot G))$ denotes the left regular representation of $\dot G$, and $\dot t:=tH$ denotes the class of $t\in G$ in $\dot G$. The restricted coaction $\dot\delta$ then necessarily satisfies
$$\dot\delta(m)=m\otimes\dot\lambda_{\dot t}\quad \mbox{for all }t\in G, m\in M_t.$$
    Here is how the restricted coaction $\dot\delta$ can be obtained in a more explicit way using a unitary implementation: we consider the spectral Fell bundle $\M$ of $\delta$, and look at the Fell bundle $\M\times \dot G$ over $G\times \dot G$ defined as the pull back of $\M$ along the first coordinate projection $G\times \dot G\onto G$. Then, as in the construction of the coaction $\delta\cong\delta_\M$, we can construct a unitary operator $\dot U$ on $\ell^2_{\wstar}(\M\times \dot G)\cong \ell^2_{\wstar}(\M)\otimes\ell^2(\dot G)$ by the formula
    \begin{equation}\label{eq:def-dot-U}
    \dot U(\zeta)(s,\dot t):=\zeta(s,s^{-1}\dot t).
    \end{equation}
    Of course, here we use the canonical left translation action of $G$ on $\dot G=\dot G $. Straightforward computations show that $\dot U(m\otimes 1)\dot U^*=m\otimes \dot\lambda_{\dot t}$ if $m\in M_t$ for $t\in G$. It follows that the map $\dot\delta\colon M\to M\bar\otimes W^*_\red(\dot G)\sbe \bB(\ell^2_{\wstar}(\M\times \dot G))$ defined by 
    $$\dot\delta(m):=\dot U(x\otimes 1)\dot U^*$$
    yields the desired restricted W*-coaction $\dot\delta\colon M\to M\bar\otimes W^*_\red(\dot G)$.

We now describe the restriction of coactions in terms of their spectral decompositions.

\begin{proposition}
    Every coaction $\delta\colon M\to M\bar\otimes W^*_\red(G)$ ``restricts'' to a coaction $\dot\delta\colon M\to M\bar\otimes W^*_\red(\dot G)$, in such a way that $a\in M_t=\{a\in M:\delta(a)=a\otimes\lambda_t\}$ is sent to $\dot\delta(a)=a\otimes \dot\lambda_{\dot t}$ for every $t\in G$. Moreover, if $\M$ denotes the spectral Fell bundle of $\delta$, then the spectral Fell bundle of $\dot\delta$ is canonically isomorphic to the \Wstar{}partial cross-sectional Fell bundle $\M/H$ of $\M$ (as in Definition~\ref{Def:Partial-Cross-Sec}).
    \end{proposition}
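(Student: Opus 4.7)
The construction of $\dot\delta$ as a \Wstar{} coaction has essentially been sketched already via the unitary implementation $\dot\delta(m):=\dot U(m\otimes 1)\dot U^*$, with $\dot U$ given by~\eqref{eq:def-dot-U} on $\ell^2_{\wstar}(\M)\otimes\ell^2(\dot G)$. The plan is therefore to verify, in this order: (i) the spectral formula $\dot\delta(a)=a\otimes\dot\lambda_{\dot t}$ for $a\in M_t$, from which one also deduces that $\dot\delta$ lands in $M\bar\otimes W^*_\red(\dot G)$ and satisfies the coaction identity; (ii) the identification of the spectral Fell bundle of $\dot\delta$ with the partial cross-sectional bundle $\dot\M$ of Definition~\ref{Def:Partial-Cross-Sec}.

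For (i) I would compute $\dot U(m\otimes 1)\dot U^*$ on a typical element of $\ell^2_{\wstar}(\M)\otimes\ell^2(\dot G)$, using~\eqref{eq:def-dot-U} together with the identification $M\cong W^*_\red(\M)$ from Theorem~\ref{theo:spectral-Fell-bundle}; the resulting identity $\dot U(m\otimes 1)\dot U^*=m\otimes\dot\lambda_{\dot t}$ for $m\in M_t$ is the exact analogue, at the level of $\dot G$, of the computation $U(m\otimes 1)U^*=m\otimes\lambda_t$ used to construct $\delta_\M$ in Theorem~\ref{theo:spectral-Fell-bundle}. Normality of $\dot\delta$ is automatic because it is conjugation by a unitary, and combined with the weak*-density of $\oplus_{t\in G}^{\mathrm{alg}}M_t$ in $M$ this forces $\dot\delta(M)\sbe M\bar\otimes W^*_\red(\dot G)$. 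The coaction identity
\begin{equation*}
(\dot\delta\otimes\id)\circ\dot\delta=(\id\otimes\Delta_{\dot G})\circ\dot\delta
\end{equation*}
is verified on the dense subspace $\oplus_t^{\mathrm{alg}}M_t$ using $\Delta_{\dot G}(\dot\lambda_{\dot t})=\dot\lambda_{\dot t}\otimes\dot\lambda_{\dot t}$, and extends to all of $M$ by normality.

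For (ii) I would introduce the spectral projections $\dot E_u:=(\id\otimes\dot\varphi_u)\circ\dot\delta\colon M\to M$, where $\dot\varphi_u\in W^*_\red(\dot G)_*$ is the vector functional $x\mapsto\braket{\delta_u}{x(\delta_e)}$ on $\ell^2(\dot G)$. Using (i) one gets $\dot E_u(m_t)=\delta_{u,\dot t}\,m_t$ for $m_t\in M_t$, so $\dot E_u$ is normal and idempotent, with image on the algebraic direct sum $\oplus_{t\in G}^{\mathrm{alg}}M_t$ equal to $\oplus_{t\in u}^{\mathrm{alg}}M_t$ and hence, by weak*-continuity, with full image equal to the weak*-closed linear span of $\{M_t:t\in u\}$; this weak*-closed span is, by Definition~\ref{Def:Partial-Cross-Sec}, the fiber $\dot M_u$ of $\dot\M$. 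A short argument shows that the $u$-spectral subspace of $\dot\delta$ coincides with the range of $\dot E_u$: if $\dot\delta(a)=a\otimes\dot\lambda_u$ then $\dot E_u(a)=a\cdot\dot\varphi_u(\dot\lambda_u)=a$, while the opposite inclusion follows by approximating any element in the weak*-closure of $\oplus_{t\in u}^{\mathrm{alg}}M_t$ and using normality of $\dot\delta$ together with (i). This gives the desired identification of the spectral Fell bundle of $\dot\delta$ with $\dot\M$.

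The one place where I expect to be careful is bookkeeping with the several identifications in play ($M\cong W^*_\red(\M)$, the operator realization on $\ell^2_{\wstar}(\M)\otimes\ell^2(\dot G)$, and the pullback Fell bundle structure of $\M\times\dot G$). Once those are set up as in the paragraph preceding the proposition, the remaining arguments are essentially a fiber-by-fiber translation to $\dot G$ of the corresponding steps for $\delta_\M$ in Theorem~\ref{theo:spectral-Fell-bundle}, with no genuinely new difficulty.
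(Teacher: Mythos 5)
Your proposal is correct and follows essentially the same route as the paper: the existence of $\dot\delta$ and the formula $\dot\delta(m)=m\otimes\dot\lambda_{\dot t}$ are obtained from the unitary implementation $\dot U$ of \eqref{eq:def-dot-U} (as in the discussion preceding the proposition), and the identification of the spectral bundle of $\dot\delta$ with $\dot\M$ is proved exactly as in the paper, via the spectral projections $\dot E_{\dot t}=(\id\otimes\dot\varphi_{\dot t})\circ\dot\delta$, their normality, the weak*-density of $\oplus_{t\in G}^{\mathrm{alg}}M_t$ in $M$, and the computation $\dot E_{\dot t}(m_s)=m_s$ if $\dot s=\dot t$ and $0$ otherwise. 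No substantive difference in method or any gap.
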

    \begin{proof}
    Let $\M=(M_t)_{t\in G}$ (resp. $\dot\M=(\dot M_{\dot t})_{\dot t\in \dot G}$) be the spectral Fell bundle of $\delta$ (resp. $\dot\delta$). Identifying $M\cong W^*_\red(\M)$ as in Theorem~\ref{theo:spectral-Fell-bundle}, and using the construction of $\M/H$ (see discussion before Definition~\ref{Def:Partial-Cross-Sec}), the fiber of $\M/H$ at $\dot t\in \dot G$ may be identified with the subspace
    $$W^*_\red(\M_u)=\cspn^\wstar\{\delta(m)=m\otimes s: s\in u, m\in M_s\}\sbe M\bar\otimes \bB(\ell^2(G)).$$
    Since $\delta$ is an isomorphism onto its image, we may further identify this with the $\wstar$-closed subspace of $M$ generated by $M_s$ with $s\in G$ in the same class of $t$, i.e., $\dot s=\dot t$. 
    All we need to show is then: 
    $$\dot M_{\dot t}= \cspn^\wstar \{m\in M_s: s\in G, \dot s=\dot t\}.$$
    The inclusion $``\supseteq"$ is clear because 
    the coaction $\dot\delta$ satisfies $\dot\delta(m)=m\otimes\dot\lambda_{\dot t}$ for all $t\in G$.
    To show that inclusion $``\sbe"$, we use that $\dot M_{\dot t}=\dot E_{\dot t}(M)$, where $\dot E\colon M\onto \dot M_{\dot t}$ denotes the spectral projection. But since the subspaces $M_s$ for $s\in G$ generate a W*-dense subspace of $M$, and since $\dot E$ is W*-continuous, it is enough to see that observe that if $m\in M_s$, then 
    $$\dot E_{\dot t}(m)=(\id\otimes \dot\varphi_{\dot t})(\dot\delta(m))=(\id\otimes \dot\varphi_{\dot t})(m\otimes\dot\lambda_{\dot s})=m\in M_s$$
    if $\dot s=\dot t$, and it is zero otherwise.
    \end{proof}

The above result gives an alternative proof for part of the statement in Theorem~\ref{thm: identificacion of kernels of WMH}. In order to provide an alternative proof of the complete statement in the language of coactions, we want to prove:

\begin{theorem}
    Let $\delta\colon M\to M\bar\otimes W^*_\red(G)$ be a W*-coaction and let $H\trianglelefteq G$ be a normal subgroup with corresponding quotient $\dot G=G/H$.
    Consider the W*-crossed product $M\bar\rtimes_\delta G$ endowed with the dual W*-action $\dual\delta\colon G\car M\bar\rtimes_\delta G$. Then the $H$-fixed point algebra (i.e. the fixed point algebra for the restricted action of $H$) is canonically isomorphic to
    $$(M\bar\rtimes_\delta G)^H\cong M\bar\rtimes_{\dot\delta} \dot G,$$
    the W*-crossed product for the restricted coaction $\dot\delta\colon M\to M\bar\otimes W^*_\red(\dot G)$.
\end{theorem}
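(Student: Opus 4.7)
The plan is to translate the statement into the language of \Wstar{}Fell bundles and reduce it to the previously established identification of $H$-fixed points of the algebra of kernels in Theorem~\ref{thm: identificacion of kernels of WMH}. Let $\cM=(M_t)_{t\in G}$ denote the spectral \Wstar{}Fell bundle of $\delta$; by Theorem~\ref{theo:spectral-Fell-bundle}, the pair $(M,\delta)$ is canonically isomorphic to $(W^*_\red(\cM),\delta_\cM)$. The proposition immediately preceding the statement shows that the spectral Fell bundle of the restricted coaction $\dot\delta$ is canonically isomorphic to the \Wstar{}partial cross-sectional bundle $\dot\cM$, so another application of Theorem~\ref{theo:spectral-Fell-bundle} identifies $(M,\dot\delta)\cong(W^*_\red(\dot\cM),\delta_{\dot\cM})$ as well.

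With these identifications in hand, the proof consists of composing the chain of canonical \Wstar{}isomorphisms
\[
(M\bar\rtimes_\delta G)^H \;\cong\; \bk_\wstar(\cM)^H \;\cong\; \bk_\wstar(\dot\cM) \;\cong\; W^*_\red(\dot\cM)\bar\rtimes_{\delta_{\dot\cM}}\dot G \;\cong\; M\bar\rtimes_{\dot\delta}\dot G.
\]
The first isomorphism is obtained from Proposition~\ref{pro:equivariant-iso-dual-kernel} applied to $\cM$, which supplies a $G$-equivariant \Wstar{}isomorphism $M\bar\rtimes_\delta G\cong \bk_\wstar(\cM)$ intertwining $\dual\delta$ with $\beta^\wstar$; restricting to $H$-fixed points on both sides yields the first identification. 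The second isomorphism is exactly Theorem~\ref{thm: identificacion of kernels of WMH}. The third follows from Proposition~\ref{pro:equivariant-iso-dual-kernel} applied now to the \Wstar{}Fell bundle $\dot\cM$ over $\dot G$, and the fourth results from transporting along the identification $(W^*_\red(\dot\cM),\delta_{\dot\cM})\cong(M,\dot\delta)$ recorded above.

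The key point to verify, and essentially the only substantive step beyond citation of previous results, is that the dual action $\dual\delta$ on $M\bar\rtimes_\delta G$ corresponds under the first isomorphism to $\beta^\wstar$ on $\bk_\wstar(\cM)$, so that the $H$-fixed point algebras on each side genuinely match. This is precisely the $G$-equivariance asserted in Proposition~\ref{pro:equivariant-iso-dual-kernel}, and can be cross-checked on generators by observing that the kernel $k_{a,f}(s,t)=a(st^{-1})f(t)$ representing $\delta(a)(1\otimes f)$ is sent by $\beta^\wstar_h$ to the kernel $(s,t)\mapsto a(st^{-1})f(th)=k_{a,\rt_h(f)}(s,t)$, which in turn represents $\dual\delta_h(\delta(a)(1\otimes f))=\delta(a)(1\otimes\rt_h(f))$. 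No further obstacle is expected; the remaining content of the argument is a diagram chase through the Fell-bundle/coaction dictionary.
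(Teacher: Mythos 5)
Your argument is correct, but it follows a genuinely different route from the paper. You derive the statement from the Fell-bundle dictionary: the $G$-equivariant isomorphism $M\bar\rtimes_\delta G\cong \bk_\wstar(\cM)$ of Proposition~\ref{pro:equivariant-iso-dual-kernel} (your generator-level check that $\beta^\wstar_h$ sends $k_{a,f}$ to $k_{a,\rt_h(f)}$, matching $\dual\delta_h$, is exactly the right verification), then Theorem~\ref{thm: identificacion of kernels of WMH} to pass from $\bk_\wstar(\cM)^H$ to $\bk_\wstar(\dot\cM)$, then Proposition~\ref{pro:equivariant-iso-dual-kernel} again for $\dot\cM$ over $\dot G$, and finally the identification of the spectral bundle of $\dot\delta$ with $\dot\cM$. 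Since Theorem~\ref{thm: identificacion of kernels of WMH} was proved independently earlier, this chain is not circular and does establish the isomorphism. The paper, however, deliberately avoids that theorem here: its stated purpose for this result is to give an \emph{alternative} proof of the complete statement of Theorem~\ref{thm: identificacion of kernels of WMH} purely in the language of coactions, so it instead computes both sides directly -- describing $M\bar\rtimes_{\dot\delta}\dot G$ on generators, conjugating by the unitary $\dot U$ and the map $\dot\Delta$, and, crucially, invoking the integrability of the dual action (after \cite[Proposition~2.5]{Vaes:Unitary}) to identify $(M\bar\rtimes_\delta G)^H$ with $\cspn^\wstar\{\delta(m)(1\otimes f): m\in M,\ f\in\ell^\infty(\dot G)\}$, noting $\ell^\infty(G)^H=\ell^\infty(\dot G)$. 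Your proof buys brevity and stays entirely within results already proved in the paper; the paper's computation buys logical independence from Section~\ref{sec:partial-Fellbundles}, which is what makes it usable as a second, coaction-theoretic derivation of Theorem~\ref{thm: identificacion of kernels of WMH}. If you intend your argument to replace the paper's proof, you should flag that it sacrifices this independence; as a proof of the stated isomorphism alone, it is complete.
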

\begin{proof}
By definition, the W*-crossed product $M\bar\rtimes_{\dot\delta}\dot G$ is the span $\wstar$-closure of elements of the form 
$$\dot\delta(m)(1\otimes f)\in M\bar\otimes\bB(\ell^2(\dot G))$$ with $m\in M$ and $f\in \ell^\infty(\dot G)$. Since $M$ is generated by the spectral subspaces $M_t$ and $\dot\delta(m)=m\otimes\dot\lambda_{\dot t}$ for $m\in M_t$, we can also view $M\rtimes_{\dot\delta}\dot G$ as the span $\wstar$-closure of elements of the form $m\otimes \dot\lambda_{\dot t}f$ with $m\in M_t$, $t\in G$, and $f\in \ell^\infty(\dot G)$. 
Applying $\delta\otimes\id$, we can further identify 
$$M\rtimes_{\dot\delta}\dot G\cong \cspn^\wstar\{m\otimes\lambda_t\otimes\dot\lambda_{\dot t}:m\in M_t, t\in G, f\in \ell^\infty(\dot G)\}\sbe M\bar\otimes\bB(\ell^2(G))\bar\otimes\bB(\ell^2(\dot G)).$$
Let us now consider the unitary $\dot U\in \bB(\ell^2(G\times \dot G))$ given by the same formula as in~\eqref{eq:def-dot-U}. A simple computation shows that 
$$(1\otimes f)U=U\dot\Delta(f)\quad\forall f\in \ell^\infty(\dot G),$$
where $\dot\Delta\colon \ell^\infty(\dot G)\to \ell^\infty(G\times\dot G)$ is defined by $\dot\Delta(f)(s,\dot t):=f(st)$. Here we view $\ell^\infty(\dot G)$ as functions on $G$ that are constant on lateral classes $tH$. Morever, we also represent $\ell^\infty(G\times\dot G)$ as multiplication operators on $\ell^2(G\times\dot G)$. Applying $\Ad_{1\otimes U^*}$, we the get an isomorphism
$$M\rtimes_{\dot\delta}\dot G\cong \cspn^\wstar\{m\otimes (\lambda_t\otimes 1)\dot\Delta(f): m\in M_t, t\in G,f\in \ell^\infty(\dot G)\}\sbe M\bar\otimes\bB(\ell^2(G))\bar\otimes\bB(\ell^2(\dot G)).$$
Notice that
$$\cspn^\wstar\{(\lambda_t\otimes 1)\dot\Delta(f): t\in G, f\in \ell^\infty(\dot G)\}$$
can be viewed as the \Wstar{}crossed product $G\ltimes_{\dot\tau}\ell^\infty(\dot G)$ for the canonical translation $G$-action $\dot\tau$ on $\ell^\infty(\dot G)$. This can be viewed canonically as the \Wstar{}subalgebra of operators on $\ell^2(G\times G)$ generated by the elements of the form $(\lambda_t\otimes)\Delta(f)$ with $t\in G$ and $f\in \ell^2(\dot G)$, where $\Delta(f)(s,t):=f(st)$ is viewed as a multiplication operator now on $\ell^2(G\times G)=\ell^2(G)\otimes\ell^2(G)$. It follows that
$$M\rtimes_{\dot\delta}\dot G\cong \cspn^\wstar\{m\otimes (\lambda_t\otimes 1)\Delta(f): m\in M_t, t\in G,f\in \ell^\infty(\dot G)\}\sbe M\bar\otimes\bB(\ell^2(G))\bar\otimes\bB(\ell^2(G)).$$
On the other hand, using similar arguments, the dual $G$-action, and hence also its restriction to $H$, on the \Wstar{}crossed product $M\rtimes_\delta G$ is \emph{integrable} (see \cite[Proposition~2.5]{Vaes:Unitary}). This implies that the fixed point algebra $(M\rtimes_\delta G)^H$ equals
$$(M\rtimes_\delta G)^H=\cspn^\wstar\{\delta(m)(1\otimes f): m\in M, f\in \ell^\infty(\dot G)\}.$$
This is because the elements of the form $\delta(m)(1\otimes f)$ with $f\in C_c(G)$
form a $\wstar$-dense subset of integrable elements with integral (along the action) given by $\delta(m)(1\otimes E(f))$, where $E(f)=\sum \tau_t(f)$, where $\tau$ denotes the (right) translation $G$-action. But the $\wstar$-closed linear span of those elements equals $\ell^\infty(G)^H=\ell^\infty(\dot G)$. Similar arguments as before (using the map $\delta\otimes\id$ and conjugation with the unitary $1\otimes U^*$, where $U\zeta(s,t)=\zeta(s,s^{-1}t)$) imply that
$$(M\rtimes_\delta G)^H\cong \cspn^\wstar\{m\otimes (\lambda_t\otimes 1)\Delta(f): m\in M_t, t\in G, f\in \ell^\infty(\dot G)\}\cong M\rtimes_{\dot\delta}\dot G,$$
finishing the proof of the theorem.
\end{proof}

\subsection{Aproximation property in terms of coactions}

Since the category of coactions of $G$ is equivalent to that of Fell bundles over $G$, it is natural to ask how the approximation property of Fell bundles translates into the language of coactions.

\begin{definition}
    We say that a \Wstar{}coaction $\delta\colon M\to M\bar\otimes W^*_\red(G)$ is \emph{co-amenable} if there exists a norm-one projection (automatically a conditional expectation) $M\rtimes_\delta G\onto M$.
\end{definition}

\begin{example}
Start with an action $\sigma$ of $G$ on a von Neumann algebra $N$, and consider its dual coaction $\delta=\dual\sigma$ of $G$ on $M=N\bar\rtimes_\sigma G$. Then $\delta$ is co-amenable if and only if $\sigma$ is amenable in the sense of Anantharaman-Delaroche. Indeed, by W*-crossed product duality for \Wstar{}actions, we have $$M\bar\rtimes_\delta G\cong N\bar\rtimes_\sigma G\bar\rtimes_{\dual\sigma}G\cong N\bar\otimes\bB(\ell^2(G)).$$ 
The result then follows from \cite[Proposition~4.1]{ADaction1979}. Alternatively, this will also follow from our next result.    
\end{example}

\begin{theorem}
        A \Wstar{}coaction $\delta\colon M\to M\bar\otimes W^*_\red(G)$ is co-amenable if and only if the dual action $\dual\delta$ of $G$ on $M\bar\rtimes_\delta G$ is W$^*$-amenable, if and only if the spectral \Wstar{}Fell bundle of $\delta$ has the W*AP.
\end{theorem}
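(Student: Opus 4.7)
The plan is to reduce everything to statements about the spectral \Wstar Fell bundle $\M=(M_t)_{t\in G}$ of $\delta$. By Theorem~\ref{theo:spectral-Fell-bundle} we may identify $M\cong W^*_\red(\M)$ and $\delta\cong\delta_\M$, and Proposition~\ref{pro:equivariant-iso-dual-kernel} then gives a canonical $G$-equivariant isomorphism $M\bar\rtimes_\delta G\cong \bk_\wstar(\M)$ which carries the dual action $\dual\delta$ onto the canonical action $\beta^\wstar$, and the inclusion $M\hookrightarrow M\bar\rtimes_\delta G$ onto $W^*_\red(\M)\hookrightarrow\bk_\wstar(\M)$.

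With these identifications in place, the equivalence of co-amenability and W*AP is immediate: a conditional expectation $M\bar\rtimes_\delta G\onto M$ exists if and only if one exists from $\bk_\wstar(\M)$ to $W^*_\red(\M)$, and condition~\eqref{item: cond exp k} of Theorem~\ref{thm: new characterization} says this latter condition is equivalent to the W*-amenability of $\M$, which is the same as W*AP.

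For the equivalence with W*-amenability of $\dual\delta$, the question reduces to: is the W*-action $(\bk_\wstar(\M),G,\beta^\wstar)$ W*-amenable iff $\M$ is W*-amenable? For any global W*-action $(N,G,\gamma)$, a direct computation with the semidirect product bundle $\cN=\{N\delta_t\}$ shows that $\cspn^\wstar M_tM_{t^{-1}}=N$ for every $t$, so that all the ideals $Z_t$ appearing in the central partial action equal $Z(N)$ and the partial automorphisms $\sigma_t$ reduce to $\gamma_t|_{Z(N)}$; thus the central partial action of $\cN$ is simply the global action $\gamma|_{Z(N)}$. Applying this to $N=\bk_\wstar(\M)$, we see that $\beta^\wstar$ on $\bk_\wstar(\M)$ is W*-amenable iff $\beta^\wstar|_{Z(\bk_\wstar(\M))}$ is. By \cite[Theorem~5.14]{abadie2021amenability}, $\beta^\wstar|_{Z(\bk_\wstar(\M))}$ is the W*-enveloping action of the central partial action $\sigma$ of $\M$, so its W*-amenability is equivalent to that of $\sigma$, which by definition is the W*-amenability of $\M$.

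The main technical point is the identification of the central partial action of the semidirect product bundle of a global action $(N,G,\gamma)$ with the restriction $\gamma|_{Z(N)}$, combined with \cite[Theorem~5.14]{abadie2021amenability} to bridge between the dynamics of $\beta^\wstar$ on the centre of $\bk_\wstar(\M)$ and the central partial action of $\M$. Once these identifications are assembled, the proof is a chain of equivalences grounded in Theorem~\ref{thm: new characterization} and Proposition~\ref{pro:equivariant-iso-dual-kernel}, and no further analytic input is needed.
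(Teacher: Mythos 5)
Your proposal is correct, and for the identifications and for the equivalence ``co-amenable $\Leftrightarrow$ W*AP'' it follows the same route as the paper: Theorem~\ref{theo:spectral-Fell-bundle} and Proposition~\ref{pro:equivariant-iso-dual-kernel} give the $G$-equivariant isomorphism $M\bar\rtimes_\delta G\cong \bk_\wstar(\M)$ carrying $M$ onto $W^*_\red(\M)$ and $\dual\delta$ onto $\beta^{\wstar}$, and then condition (c) of Theorem~\ref{thm: new characterization} does the work. Where you genuinely diverge is the remaining equivalence with W$^*$-amenability of the dual action: the paper simply quotes \cite[Theorem~6.8]{abadie2021amenability} (W*AP of $\M$ $\Leftrightarrow$ amenability of $\beta^{\wstar}$ on $\bk_\wstar(\M)$), whereas you re-derive this from the computation that the central partial action of the semidirect product bundle of a global system $(N,G,\gamma)$ is $\gamma|_{Z(N)}$, together with \cite[Theorem~5.14]{abadie2021amenability} and the definition of W$^*$-amenability of a \Wstar Fell bundle. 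Your route buys a self-contained argument inside the framework of this paper, essentially reproving the cited Theorem~6.8; the paper's citation is shorter.

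One point you should make explicit: the step ``$\beta^{\wstar}$ on $\bk_\wstar(\M)$ is W$^*$-amenable iff $\beta^{\wstar}|_{Z(\bk_\wstar(\M))}$ is'' does \emph{not} follow from your computation of the central partial action alone. That computation only identifies the central (partial) action of the semidirect product bundle; to pass from amenability of the action on all of $N=\bk_\wstar(\M)$ to amenability of its restriction to the centre you need Anantharaman-Delaroche's theorem that amenability of a W$^*$-action depends only on the restricted action on the centre (equivalently, the assertion that a W$^*$-action is amenable iff its semidirect product bundle is W$^*$-amenable, stated at the end of the paper and going back to \cite{ADaction1979,ADactionII1982}). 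With that reference inserted, your chain of equivalences is complete and correct.
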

\begin{proof}
    This is a consequence of our previous results: by Theorem~\ref{theo:spectral-Fell-bundle} and Proposition~\ref{pro:equivariant-iso-dual-kernel}, we have an equivariant isomorphism $M\bar\rtimes_\delta G\cong \bk_\wstar(\M)$, where $\M$ denotes the spectral Fell bundle of $\delta$. And by Theorem~\ref{thm: new characterization}, the (dual) $G$-action on $\bk_\wstar(\M)$ is amenable if and only if there exists a norm-one projection (a conditional expectation)
    $$M\bar\rtimes_\delta G\cong\bk_\wstar(\M)\onto W^*_\red(\M)\cong M.$$
    By \cite[Theorem~6.8]{abadie2021amenability}, $\M$ has the W*AP if and only if the dual action $\dual\delta$ on $\bk_\wstar(\M)$ is amenable.
\end{proof}

\begin{remark}
  One can go further and consider the notion of co-amenability for \cstar {}coactions—that is, coactions of a group $G$ on a \cstar {}algebra $A$. The most direct approach is to say that a coaction $\delta\colon A \to A \otimes C^*(G)$ is \emph{co-amenable} (in the sense of \cstar {}coactions) if its dual action $\dual\delta$ of $G$ on the crossed product $A \rtimes_\delta G$ is amenable. If $\A$ denotes the spectral Fell bundle associated to $\delta$, then $A \rtimes_\delta G$ identifies with the \cstar {}algebra of kernels $\bk(\A)$, and the dual action corresponds to the canonical (right) translation action of $G$ on $\bk(\A)$. It follows that $\delta$ is co-amenable if and only if the Fell bundle $\A$ has the approximation property.

  An alternative approach to defining co-amenability for \cstar {}coactions is to associate a \Wstar{}coaction to a given coaction $\delta\colon A \to A \otimes C^*(G)$. Although we do not pursue this direction in detail here, the basic idea would be as follows. Instead of taking the bidual of $A$ directly, one first considers the crossed product $A \rtimes_\delta G$, and then its bidual \Wstar{}algebra $(A \rtimes_\delta G)''$. Within this, one identifies the von Neumann algebra $M$ generated by the image of $A$ in $\M(A \rtimes_\delta G) \sbe (A \rtimes_\delta G)''$. This construction should yield a \Wstar{}coaction of $G$ on $M$, which corresponds to the dual coaction on $W^*_\red(\A'')$, where $\A$ is the spectral Fell bundle of $\delta$. This perspective aligns with the analogous construction for group actions described in \cite{BssEff_amenability}, and is compatible with the isomorphism $\bk_\wstar(\A'') \cong \bk(\A)''$ established in \cite[Theorem~5.23]{abadie2021amenability}.
\end{remark}

\section{Reductions and quotients of Fell bundles}\label{sec:applications}

In this section we prove the analogue of Theorem~\ref{thm: M amena iff N and Mh amenable} 
for Fell bundles over discrete groups. We begin by constructing the objects involved in the statement.

Let $\cB=\{B_t\}_{t\in G}$ be a Fell bundle over a discrete group $G$, and let 
$H\trianglelefteq G$ be a normal subgroup. 
The natural inclusion $\iota_\cB\colon \cB\to C^*(\cB)$ sends $b\in B_t$ to the cross-section 
that takes the value $b$ at $t$ and $0$ elsewhere. 
For each $u\in G/H=:\dot G$, let $\dot{B}_u$ be the norm-closure of 
$\sum_{t\in u}\iota_\cB(B_t)$ in $C^*(\cB)$. 
Note that $\iota_\cB$ induces a grading of $C^*(\cB)$ over $G$, which we call the
\emph{canonical $G$-grading of $C^*(\cB)$} induced by~$\cB$.

\begin{proposition}[cf.~Proposition~\ref{prop: the partial cross sectional w bundle}]
\label{prop: construction of dot B}
There exists a unique injective $*$-homomorphism 
$h\colon C^*(\cB_H)\to C^*(\cB)$ such that 
$h\circ \iota_{\cB_H}=\iota_\cB|_{\cB_H}$. 
Hence $h(C^*(\cB_H))$ is $C^*$-isomorphic to $\dot{B}_H$.
With the norm and $*$-operations inherited from $C^*(\cB)$, 
$\dot\cB := \{\dot{B}_u\}_{u\in \dot{G}}$ is a Fell bundle. 
Moreover, $\cB$ and $\iota_{\dot{\cB}}$ induce a grading of $C^*(\dot{\cB})$ over $G$ 
with fibres $\{B_t\}_{t\in G}$: for each $t\in G$, 
$B_t$ is linearly and isometrically identified with 
$\iota_{\dot{\cB}}(\iota_\cB(B_t))\subseteq C^*(\dot{\cB})$. 
There exists a $*$-isomorphism $k\colon C^*(\cB)\to C^*(\dot{\cB})$ 
that restricts to the identity on each fibre $B_t$.
\end{proposition}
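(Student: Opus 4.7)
The plan is to build everything from the universal property of the full cross-sectional C$^*$-algebra of a Fell bundle, combined with two classical facts: (i) any representation $\pi$ of a Fell sub-bundle $\cB_H$ can be induced to a representation $\Ind\pi$ of $\cB$ whose restriction to $\cB_H$ contains $\pi$ as a direct summand; and (ii) for any Fell bundle $\cC$, the canonical inclusion $\iota_\cC\colon \cC\to C^*(\cC)$ is fibrewise isometric.

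For the existence and injectivity of $h$, note that $\iota_\cB|_{\cB_H}$ is a Fell-bundle representation of $\cB_H$ into $C^*(\cB)$, so the universal property of $C^*(\cB_H)$ yields $h$. For injectivity, it suffices to show $\|x\|_{C^*(\cB_H)}=\|h(x)\|_{C^*(\cB)}$ for every $x\in C_c(H,\cB_H)$. The inequality $\|h(x)\|_{C^*(\cB)}\leq \|x\|_{C^*(\cB_H)}$ follows by restricting representations of $\cB$ to $\cB_H$. For the reverse inequality, fact~(i) gives $\|x\|_\pi\leq\|x\|_{\Ind\pi}\leq\|h(x)\|_{C^*(\cB)}$ for any representation $\pi$ of $\cB_H$; taking the supremum over $\pi$ yields $\|x\|_{C^*(\cB_H)}\leq \|h(x)\|_{C^*(\cB)}$.

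To see that $\dot\cB=\{\dot B_u\}_{u\in \dot G}$ is a Fell bundle, each $\dot B_u$ is by construction a closed subspace of $C^*(\cB)$. Normality of $H$ gives $(sH)(tH)=stH$ and $(sH)^{-1}=s^{-1}H$, and combined with $B_sB_t\sbe B_{st}$ and $B_s^*\sbe B_{s^{-1}}$, this yields $\dot B_u\dot B_v\sbe \dot B_{uv}$ and $\dot B_u^*\sbe \dot B_{u^{-1}}$. The C$^*$-Fell bundle axioms (the C$^*$-identity and positivity of $b^*b$ in $\dot B_H$) are inherited from $C^*(\cB)$. For the $G$-grading of $C^*(\dot\cB)$ with fibres $\{B_t\}_{t\in G}$: by fact~(ii) applied to $\dot\cB$, $\iota_{\dot\cB}$ embeds each $\dot B_u$ isometrically into $C^*(\dot\cB)$, and by fact~(ii) applied to $\cB$, the subspace $\iota_\cB(B_t)\sbe \dot B_{tH}\sbe C^*(\cB)$ is isometric to $B_t$. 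Hence $b\mapsto \iota_{\dot\cB}(\iota_\cB(b))$ identifies each $B_t$ isometrically with a subspace of $C^*(\dot\cB)$, and the algebraic sum of these subspaces is dense there because its restriction to $t\in u$ is dense in $\dot B_u$ for each coset $u$, and the direct sum of $\iota_{\dot\cB}(\dot B_u)$ is dense in $C^*(\dot\cB)$.

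Finally, $k$ is obtained by two applications of universality. The map $\cB\ni b\mapsto \iota_{\dot\cB}(\iota_\cB(b))\in C^*(\dot\cB)$ is a Fell-bundle representation by the identification above, so it extends to a $*$-homomorphism $k\colon C^*(\cB)\to C^*(\dot\cB)$ that is the identity on each $B_t$. Conversely, the canonical inclusion $\dot\cB\sbe C^*(\cB)$ is a Fell-bundle representation and extends to a $*$-homomorphism $j\colon C^*(\dot\cB)\to C^*(\cB)$. The compositions $j\circ k$ and $k\circ j$ are the identities on the dense subspaces $\iota_\cB(C_c(G,\cB))$ and $\iota_{\dot\cB}(C_c(\dot G,\dot\cB))$ respectively, hence everywhere. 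The main obstacle is the induction-based argument for injectivity of $h$; once this is in place, the remaining steps are routine bookkeeping with universal properties and the general theory of Fell bundles over discrete groups.
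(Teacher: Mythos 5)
Your proposal follows essentially the same route as the paper: the universal property of $C^*(\cB_H)$ gives $h$, its injectivity comes from inducing a representation of $\cB_H$ up to $\cB$ and using that the original representation is a direct summand of the restriction of the induced one, and $k$ together with its inverse (your $j$, the integrated form of the inclusion $\dot\cB\sbe C^*(\cB)$) is the same universal-property bookkeeping the paper performs. The only caveat is that your ``classical fact (i)'' is not quite off-the-shelf: the inducibility ($\cB$-positivity) of an arbitrary representation of $\cB_H$ is \cite[Theorem~2.1]{Ferraro_2024}, and the direct-summand property uses discreteness of $G/H$ via \cite[XI~14.21]{FlDr88} --- precisely the two references on which the paper's proof rests.
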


\begin{proof}
Every $*$-representation of $\cB$ on a Hilbert space restricts to a 
$*$-representation of $\cB_H$.
Passing to integrated forms gives a $*$-homomorphism 
$h\colon C^*(\cB_H)\to C^*(\cB)$ satisfying 
$h\circ \iota_{\cB_H}=\iota_\cB|_{\cB_H}$.
To show $h$ is faithful, let $T$ be a nondegenerate $*$-representation of $\cB_H$ 
with faithful integrated form $\tilde{T}\colon C^*(\cB_H)\to \bB(X)$. 
By \cite[Theorem~2.1]{Ferraro_2024}, $T$ is $\cB$-positive and can be induced to a 
$*$-representation $S\colon \cB\to \bB(Y)$ by the process in 
\cite[Ch.~XI]{FlDr88}. 
Since $\dot{G}$ is discrete, \cite[XI~14.21]{FlDr88} implies that $T$ 
is a subrepresentation of $S|_{\cB_H}$. 
Hence $\tilde{T}\circ h=\tilde{S}$ is faithful, and $h$ is injective.

The fact that $\dot{\cB}$ is a Fell bundle follows immediately from 
$(\dot{B}_u)^*=\dot{B}_{u^{-1}}$ and $\dot{B}_u\dot{B}_v\subseteq \dot{B}_{uv}$ 
for all $u,v\in\dot{G}$.

Given any $*$-representation $T\colon C^*(\dot{\cB})\to \bB(X)$, 
there exists a unique $*$-representation $T'\colon C^*(\cB)\to \bB(X)$ 
such that $T'\circ\iota_\cB(b)=T\circ\iota_{\dot{\cB}}\circ\iota_\cB(b)$ 
for all $b\in \cB$. 
This defines a $*$-homomorphism $k\colon C^*(\cB)\to C^*(\dot{\cB})$ 
preserving the $G$-grading, hence surjective.
If $T$ is a faithful $*$-representation of $C^*(\cB)$, then 
$T\circ \iota_{\dot{\cB}}\colon \dot{\cB}\to \bB(X)$ integrates to 
a representation $\dot{T}$ satisfying $\dot{T}\circ k=T$.
Therefore $k$ is injective.
\end{proof}

\begin{theorem}\label{thm: B AD amenable iff BH and dotB AD amenable}
Let $\cB$ be a Fell bundle over $G$. 
Then $\cB$ is C$^*$-amenable if and only if both $\cB_H$ and $\dot{\cB}$ are C$^*$-amenable.
\end{theorem}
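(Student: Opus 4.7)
The strategy is to reduce this C*-statement to its W*-analog (Theorem \ref{thm: M amena iff N and Mh amenable}) by passing to the bidual W*-Fell bundle $\cB'' := \{B_t''\}_{t \in G}$. The bridge is the equivalence established in \cite{abadie2021amenability}: a C*-Fell bundle $\cB$ is C*-amenable if and only if $\cB''$ is W*-amenable. Since taking biduals commutes fiberwise with restriction to $H$, we have $(\cB_H)'' = (\cB'')_H$, and hence $\cB_H$ is C*-amenable iff $(\cB'')_H$ is W*-amenable. The key remaining ingredient is a suitable comparison between $(\dot{\cB})''$ and $\dot{(\cB'')}$: the plan is to construct a canonical surjective W*-Fell bundle morphism $(\dot{\cB})'' \twoheadrightarrow \dot{(\cB'')}$, and to show that it is an isomorphism whenever $\cB_H$ has the weak containment property, a condition implied by C*-amenability of $\cB_H$ (see the introduction).

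Granting this comparison, both directions follow by applying Theorem \ref{thm: M amena iff N and Mh amenable} to $\cB''$. For the ``only if'' direction: if $\cB$ is C*-amenable, then $\cB''$ is W*-amenable, so both $(\cB'')_H = (\cB_H)''$ and $\dot{(\cB'')}$ are W*-amenable; the first yields $\cB_H$ C*-amenable, and weak containment for $\cB_H$ then gives $(\dot{\cB})'' \cong \dot{(\cB'')}$, so $(\dot{\cB})''$ is W*-amenable and $\dot{\cB}$ is C*-amenable. For the ``if'' direction: if $\cB_H$ and $\dot{\cB}$ are C*-amenable, then $(\cB_H)'' = (\cB'')_H$ and $(\dot{\cB})''$ are W*-amenable; weak containment for $\cB_H$ again gives $(\dot{\cB})'' \cong \dot{(\cB'')}$, so $\dot{(\cB'')}$ is W*-amenable, and Theorem \ref{thm: M amena iff N and Mh amenable} then yields $\cB''$ W*-amenable, i.e., $\cB$ is C*-amenable.

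The main obstacle is constructing the W*-Fell bundle morphism $(\dot{\cB})'' \to \dot{(\cB'')}$ and verifying that it is an isomorphism under weak containment of $\cB_H$. At the unit fiber, $\dot{B}_H \cong C^*(\cB_H)$ by Proposition \ref{prop: construction of dot B}, so the natural surjection $C^*(\cB_H) \twoheadrightarrow C^*_\red(\cB_H)$ extends to the bidual surjection $C^*(\cB_H)'' \twoheadrightarrow C^*_\red(\cB_H)'' \cong W^*_\red((\cB'')_H) = \dot{M}_H$; this becomes an isomorphism precisely when $\cB_H$ has weak containment. For arbitrary cosets $u$, the identification propagates using the natural right Hilbert $\dot{B}_H$-module structure on $\dot{B}_u$, whose self-dual W*-completion over $\dot{B}_H''$ matches $\dot{M}_u = W^*_\red((\cB'')_u)$. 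A possibly cleaner alternative is to invoke the coaction framework of Section \ref{sec:coactions}, where the quotient construction corresponds to restricting a coaction from $G$ to $\dot{G}$, and to interpret these identifications via biduals of coactions.
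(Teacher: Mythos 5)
Your reduction to Theorem~\ref{thm: M amena iff N and Mh amenable} via biduals is the right frame (and is what the paper does for the ``if'' direction), but the pivot of your argument --- that the canonical morphism $(\dot{\cB})''\to \dot{(\cB'')}$ is an \emph{isomorphism} whenever $\cB_H$ has weak containment --- is false, and this breaks the proposal as written. Weak containment gives $C^*(\cB_H)\cong C^*_\red(\cB_H)$, but it does not give $C^*_\red(\cB_H)''\cong W^*_\red((\cB_H)'')$, which is what your unit-fiber identification $(\dot B_H)''\cong \dot M_H$ requires. Already for the trivial bundle with fibers $\C$ over $G=H=\Z$ (so weak containment holds), the unit fiber of $(\dot\cB)''$ is $C^*(\Z)''\cong C(\T)''$, while the unit fiber of $\dot{(\cB'')}$ is $W^*_\red(\Z)\cong L^\infty(\T)$; these are not isomorphic. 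The canonical map is only a surjective, fibrewise normal morphism --- it is a quotient, never an isomorphism except in degenerate cases.

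This matters differently in the two directions. For the ``if'' direction your argument is repairable and then coincides with the paper's proof: one does not need injectivity, only the surjective fibrewise normal map $\mu\colon(\dot\cB)''\to\dot\cM$ (built in the paper from the normal extension of the integrated regular representation), because a net of finitely supported functions witnessing the W$^*$AP of $(\dot\cB)''$ pushes forward through $\mu$ to a net witnessing the W$^*$AP of $\dot\cM$, and then Theorem~\ref{thm: M amena iff N and Mh amenable} applies. For the ``only if'' direction, however, your route needs to transfer W$^*$-amenability \emph{backwards}, from the quotient $\dot{(\cB'')}$ to $(\dot\cB)''$, and a surjection gives you nothing there; without the (false) isomorphism this direction has no proof in your proposal. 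The paper handles it by an entirely different mechanism: it is quoted from \cite[Proposition~4.10]{BussFerraro25discrete}, where one uses the B\'edos--Conti reformulation of the approximation property --- a grading-preserving ccp map $\tilde\Phi\colon C^*(\cB)\to C^*(\cB)$ witnessing the BCAP for the $G$-grading also preserves the coarser $\dot G$-grading coming from $\iota_{\dot\cB}$, hence restricts to a witness for $\dot\cB$ (and one restricts coefficients to $H$ for $\cB_H$). You would need an argument of this kind, or some other device that descends amenability along the quotient map $(\dot\cB)''\onto\dot\cM$, to close the gap.
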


\begin{proof}
The implication 
``$\cB$ is C$^*$-amenable $\Rightarrow$ $\cB_H$ and $\dot\cB$ are C$^*$-amenable’’ 
is proved (for general locally compact groups) in \cite[Proposition~4.10]{BussFerraro25discrete}.

Conversely, assume that $\cB_H$ and $\dot\cB$ are C$^*$-amenable.
By \cite[Definition~5.16]{abadie2021amenability}, $\cB$ is C$^*$-amenable 
if the bundle of biduals $\cB''=\{B_t''\}_{t\in G}=:\cM$ is W$^*$-amenable.
Note that $\cM_H=(\cB_H)''$. 
Then Theorem~\ref{thm: M amena iff N and Mh amenable} implies that 
$\cB$ is C$^*$-amenable if and only if $\cB_H$ is C$^*$-amenable and 
$\dot{\cM}$ is W$^*$-amenable.

Let $\pi\colon C^*(\cB)\to \bB(X)$ be the universal representation 
and $\pi''\colon C^*(\cB)''\to \bB(X)$ its normal extension. 
Since $C^*(\cB)=C^*(\dot{\cB})$, we may view $(\dot{B}_u)''$ as the $\wstar$-closure of 
$\sum_{t\in u}B_t$ in $C^*(\cB)''$, and $\pi''$ restricts to a representation 
$\dot{T}\colon (\dot{\cB})''\to \bB(X)$. 
To construct $\dot{\cM}$, we need a faithful representation of $W^*_\red(\cM)$.
The bidual $B_t''$ is the $\wot$-closure of $B_t$ in $C^*(\cB)''$, yielding a 
$*$-representation $T\colon \cM\to \bB(X)$ defined by $T_m=\pi''(m)$, 
faithful on $B_e''$.
Using the faithful representation 
$\lambda\otimes T\colon W^*_\red(\cM)\to \bB(\ell^2(G)\otimes X)$, 
we see that for each $u\in\dot{G}$, 
$W^*_\red(\cM_u)$ is isomorphic to the $\wot$-closure of 
$\sum_{t\in u}\lambda_t\otimes T(B_t)$.
Moreover, $\lambda\otimes T|_{\cB}\colon \cB\to \bB(\ell^2(G)\otimes X)$ integrates to 
$\lambda\pi\colon C^*(\cB)\to \bB(\ell^2(G)\otimes X)$, whose normal extension 
$(\lambda\pi)''$ satisfies $(\lambda\pi)''(b)=\lambda_t\otimes T_b$ for 
$b\in B_t$. 
Hence $(\lambda\pi)''((\dot{B}_u)'')=W^*_\red(\cM_u)$, providing a 
multiplicative, involutive, fibrewise normal, contractive and surjective map 
$\mu\colon (\dot{\cB})''\to \dot{\cM}$.

Recall that for W$^*$-Fell bundles, W$^*$-amenability is equivalent to the W$^*$AP.
Given a net of finitely supported functions 
$\{\xi_i\colon \dot{G}\to (\dot{B}_H)''\}_{i\in I}$ witnessing the W$^*$AP of 
$(\dot{\cB})''$, the net $\{\mu\circ \xi_i\}_{i\in I}$ witnesses the W$^*$AP of $\dot{\cM}$.
\end{proof}

\begin{remark}
The implication ``$\cB$ C$^*$-amenable $\Rightarrow$ $\cB_H$ C$^*$-amenable’’ 
answers a question raised by Echterhoff and Quigg 
\cite[comments before Theorem~6.5]{echterhoff1999induced}.     
\end{remark}

The implication 
``$\cB$ is C$^*$-amenable $\Rightarrow$ $\cB_H$ and $\dot\cB$ are C$^*$-amenable’’ 
was proved in \cite{BussFerraro25discrete} using a characterization of the approximation property 
due to Bédos and Conti (BCAP) \cite{bedos2024positiveMZ}. 
For completeness, we recall this connection here, since it has been used implicitly above.

\begin{theorem}[Section~3 of \cite{BussFerraro25discrete}]
For a Fell bundle over a locally compact group, 
the Bédos–Conti approximation property (BCAP) is equivalent to 
the Exel–Ng approximation property.
\end{theorem}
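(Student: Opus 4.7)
The plan is to prove both implications directly, matching the two sets of approximating data via a KSGNS/Paschke-type dilation. Recall (in the discrete case for concreteness) that the Exel--Ng AP for $\cB$ requires a net $(\xi_i)_{i\in I}$ of finitely supported sections $\xi_i\colon G\to \cB$ with $\xi_i(t)\in B_t$, $\sup_i\|\sum_s \xi_i(s)^*\xi_i(s)\|<\infty$, and $\sum_s \xi_i(s)^* b\,\xi_i(s^{-1}t)\to b$ for every $b\in B_t$; in the locally compact setting, sums are replaced by integrals and ``finitely supported'' by ``compactly supported continuous''. The BCAP, by contrast, is formulated via nets of $\cB$-valued positive-type functions $\varphi_i\colon G\to B_e$ (or positive-type multipliers of $\cB$) subject to analogous uniform bounds together with an asymptotic identity $\varphi_i(t)b\to b$ on every fibre $B_t$.

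First I would prove ``Exel--Ng AP $\Rightarrow$ BCAP''. Given a net $(\xi_i)$ as above, I would set $\varphi_i(t):=\sum_s \xi_i(s)^*\beta_t(\xi_i)(s)$ (the corresponding integral in the locally compact case), where $\beta$ denotes the canonical right-translation on sections of $\cB$. A direct verification shows that each $\varphi_i$ is of positive type as a $\cB$-valued kernel on $G$, the uniform bound transfers, and the asymptotic identity becomes exactly the BCAP condition. This direction is essentially computational, amounting to reordering finite sums and applying the Cauchy--Schwarz inequality for Hilbert $B_e$-modules.

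The converse direction ``BCAP $\Rightarrow$ Exel--Ng AP'' is the more delicate one. Given a net of positive-type functions $\varphi_i$, I would apply a KSGNS/Paschke dilation to factor each as $\varphi_i(t)=\langle \xi_i,\beta_t(\xi_i)\rangle_{B_e}$ for some section $\xi_i$ living in the Hilbert $B_e$-module completion $\ell^2(\cB)$ of the space of compactly supported continuous sections of $\cB$. The uniform boundedness of the $\varphi_i$ translates into uniform boundedness of $\|\langle \xi_i,\xi_i\rangle\|$, and the BCAP asymptotic identity becomes the Exel--Ng asymptotic identity. The main obstacle is that the dilation only produces $\xi_i$ as an $L^2$-section, whereas the Exel--Ng AP demands compactly supported continuous sections. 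To remedy this, I would approximate each $\xi_i$ in the $\ell^2(\cB)$-norm by a compactly supported continuous $\xi_i'$, use the Cauchy--Schwarz estimate to control the error on each $b\in B_t$, and pass to a diagonal subnet indexed by compact sets of $\cB$ and positive tolerances. The verification that this diagonal net truly witnesses the Exel--Ng AP -- in particular that the inner products $\sum_s (\xi_i')(s)^* b\,(\xi_i')(s^{-1}t)$ converge to $b$ uniformly on compact subsets of $\cB$ -- is the technical heart of the proof and is where local compactness of $G$ is used (through density of $C_c(\cB)$ in $\ell^2(\cB)$).
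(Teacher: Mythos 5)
First, there is nothing in the paper to compare against: the statement is imported verbatim from Section~3 of \cite{BussFerraro25discrete} and no proof is given here, so your argument must stand on its own. On its own terms it has genuine gaps, beginning with the formulations of the two properties. The Exel--Ng AP asks for a net of compactly supported continuous functions $a_i\colon G\to B_e$ \emph{valued in the unit fibre}, with $\sup_i\|\int_G a_i(s)^*a_i(s)\,ds\|<\infty$ and $\int_G a_i(ts)^*\,b\,a_i(s)\,ds\to b$ for every $b\in B_t$; your version with sections $\xi_i(t)\in B_t$ and the expression $\sum_s \xi_i(s)^*\,b\,\xi_i(s^{-1}t)$ is not even fibre-consistent (the $s$-th summand lies in $B_{s^{-1}ts^{-1}t}$, not in $B_t$). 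More seriously, the BCAP of \cite{bedos2024positiveMZ} is not about $B_e$-valued positive-type functions acting by left multiplication: it is formulated via fibre-preserving positive definite bundle maps $\Phi\colon\cB\to\cB$, equivalently restrictions of grading-preserving completely positive contractions on $C^*(\cB)$. When the fibres are noncommutative, left multiplication by some $\varphi_i(t)\in B_e$ cannot reproduce the two-sided expression $\int_G a_i(ts)^*\,b\,a_i(s)\,ds$, so the data-matching you propose already misfires in the ``easy'' direction; what one must check there is that the coefficient map $\Phi_i(b):=\int_G a_i(ts)^*\,b\,a_i(s)\,ds$ is positive definite in the B\'edos--Conti sense and compactly supported with the right bound.

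Second, in the direction BCAP $\Rightarrow$ AP the appeal to ``a KSGNS/Paschke dilation producing $\xi_i$ in $\ell^2(\cB)$'' assumes exactly what has to be proved. KSGNS yields an abstract Hilbert $B_e$-module carrying a dilation of the completely positive map; the substance of the equivalence is a Godement-type theorem for Fell bundles, namely that a \emph{compactly supported} positive definite bundle map is a two-sided coefficient of the \emph{regular} representation on $L^2(\cB)$ (for $b\in B_t$ the fibre-preserving coefficient has the conjugated form $\int_G \xi(s)^*\,b\,\xi(t^{-1}st)\,ds$, not $\langle\xi,\beta_t(\xi)\rangle\,b$). Without identifying the dilation with (a submodule of) $L^2(\cB)$ you obtain no section of the bundle at all, and an abstract dilation is a priori tied to $C^*(\cB)$ rather than to any $L^2$-data. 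The closing $C_c$-approximation and diagonal argument is fine in outline, but it rests on this missing step, and even then a further (nontrivial, though known) conversion is needed to pass from $L^2$-sections of $\cB$ to the unit-fibre-valued functions demanded by the Exel--Ng definition. In short, the skeleton (coefficients one way, a regular-representation factorization the other) is the right kind of strategy, but as written the proposal neither states the two properties correctly nor supplies the Fell-bundle Godement-type factorization that constitutes the actual content of the theorem.
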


After this theorem and \cite[Theorem~6.10]{abadie2021amenability}, it follows that for Fell bundles over discrete groups, the BCAP characterizes C$^*$-amenability.
An advantage of the BCAP in the discrete setting is that it is formulated in terms of 
maps $\Phi\colon \cB\to \cB$ that are restrictions of completely positive contractions 
$\tilde{\Phi}\colon C^*(\cB)\to C^*(\cB)$ preserving the $G$-grading induced by 
$\iota_\cB\colon \cB\to C^*(\cB)$ \cite[Theorem~3.11]{bedos2024positiveMZ}.
Such a map $\tilde{\Phi}$ also preserves the induced $\dot{G}$-grading of 
$C^*(\dot{\cB})$ via $\iota_{\dot{\cB}}$, 
and restricting $\tilde{\Phi}$ to $\dot{\Phi}\colon \dot{\cB}\to \dot{\cB}$ 
transports the BCAP from $\cB$ to $\dot{\cB}$.

\section{Amenability for Green twisted actions}

Let $(B,G,N,\alpha,\tau)$ be a Green twisted action with $G$ a discrete group, that is, 
$\alpha\colon G\to\Aut(B)$ is an action by C$^*$-automorphisms and 
$\tau\colon N\to U\M(B)$ is a homomorphism satisfying
\[
\alpha_s(\tau_n)=\tau_{sns^{-1}}
\quad\text{and}\quad 
\alpha_n=\Ad(\tau_n)\qquad(s\in G,\ n\in N).
\]
See \cite[Definition~5.3]{echterhoff1999induced} for details.

Consider the usual semidirect Fell bundle $B\times G\to G$ with operations 
$(b,s)(c,t)=(b\,\alpha_s(c),st)$ and $(b,s)^*=(\alpha_{s^{-1}}(b^*),s^{-1})$. 
There is a right action of $N$ given by
\[
(b,s)\cdot n := (b\,\tau_n,\ n^{-1}s),\qquad n\in N.
\]
The \emph{twisted semidirect Fell bundle} is then the orbit bundle
\[
B\times_N G := (B\times G)/N \ \longrightarrow\ G/N,\qquad [b,s]\longmapsto sN.
\]
In particular, the fibre over $sN$ is 
$(B\times_N G)_{sN}=\{[b,s]:b\in B\}$ 
\cite[Definition~5.3]{echterhoff1999induced}. 
The twisted crossed products (full and reduced) coincide with the full and reduced 
cross-sectional C$^*$-algebras of $B\times_N G$. 
Moreover, the semidirect bundle $B\times G\to G$ is isomorphic to the 
\emph{pull-back} $q^*(B\times_N G)$ via the quotient map $q\colon G\to G/N$, 
explicitly,
\[
(b,s)\longmapsto ([b,s],s)
\]
is an isomorphism of Fell bundles \cite[Theorem~5.6]{echterhoff1999induced}.

Each fibre of $B\times_N G$ is canonically isomorphic to $B$. Indeed,  
for $sN\in G/N$ the map
\[
B\longrightarrow (B\times_N G)_{sN},\qquad b\mapsto [b,s],
\]
is a bijection. Injectivity follows since $[b,s]=[b',s]$ implies 
$(b',s)=(b\tau_n,n^{-1}s)$ for some $n\in N$, hence $n=e$ and $b'=b$; 
surjectivity follows by writing any representative $(c,t)$ with $t=n^{-1}s$ 
and observing that $(c,t)=(c\tau_{n^{-1}},s)\cdot n$.

\begin{definition}
We say that the Green twisted action $(B,G,N,\alpha,\tau)$ is 
\emph{amenable} if the associated Fell bundle 
$\mathcal{D}:=B\times_N G\to G/N$ has 
the approximation property (AP). 
\end{definition}

\begin{remark}
Via the identification $B\cong (\mathcal{D})_N$, $b\leftrightarrow [b,e]$, 
the AP translates into the existence of a bounded net 
$\{a_i\colon G/N\to B\}_{i\in I}\subset L^2(G/N,B)$ of finitely supported functions 
such that for all $(s,b)\in G\times B$,
\[
\lim_i \big\|\,b-\sum_{u\in G/N} a_i(sN u)^*\,b\,\alpha_s(a_i(u))\big\| = 0.
\]
\end{remark}

If a Green twisted action $(B,G,N,\alpha,\tau)$ is amenable, then it satisfies the weak containment property, and hence
\[
B \rtimes_{\alpha,\tau} G \;\cong\; B \rtimes_{\alpha,\tau,r} G.
\]

\begin{proposition}\label{prop: twisted green action amenable and action on the center of bidual}
Let $(B,G,N,\alpha,\tau)$ be a Green twisted action and let 
$\mathcal{D}=B\times_N G\to G/N$ be the associated twisted semidirect Fell bundle. 
Then $\mathcal{D}$ has the approximation property if and only if the induced action
\begin{equation}\label{equ: quotient action}
G/N \curvearrowright Z(B''), \qquad sN\cdot z = \alpha_s''(z),    
\end{equation}
is W$^*$-amenable.
\end{proposition}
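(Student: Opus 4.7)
The plan is to reduce the statement to the characterization of W*-amenability for W*-Fell bundles in terms of their central partial action, as developed in \cite{abadie2021amenability} and used throughout this paper. First I observe that $\mathcal{D}$ is \emph{saturated}: the unit fibre is isomorphic to $B$ via $b\leftrightarrow [b,e]$, and for every $sN\in G/N$ and $b,c\in B$,
\[
[b,s]\,[c,s]^* \;=\; [b,s]\,[\alpha_{s^{-1}}(c^*),\,s^{-1}] \;=\; [b\,c^*,\,e],
\]
so $(\mathcal{D})_{sN}(\mathcal{D})_{sN}^* = B$. Passing to biduals, the W*-Fell bundle $\mathcal{D}''$ is saturated as well, with unit fibre $B''$ and each fibre $w^*$-isomorphic to $B''$ once a representative $s\in sN$ is chosen.

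Next I identify the central partial action $\sigma$ of $G/N$ on $Z(\mathcal{D}''_{eN})=Z(B'')$. Saturation gives $Z_{sN}=Z(B'')$ for every $sN$, so $\sigma$ is in fact a \emph{global} action. Using the identification $\mathcal{D}''_{sN}\cong B''$ via a representative $s$, right multiplication on the fibre by $c\in B$ is given by $b\cdot c=b\,\alpha_s(c)$; extending normally and imposing the defining identity $\sigma_{sN}(z)\,m=m\,z$ for $m\in \mathcal{D}''_{sN}$ and $z\in Z(B'')$, I obtain $\sigma_{sN}(z)=\alpha_s''(z)$. This formula is well defined on $G/N$ because, for every $n\in N$, $\alpha_n=\Ad(\tau_n)$ is inner and hence $\alpha_n''$ fixes $Z(B'')$ pointwise. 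Consequently $\sigma$ is precisely the action~\eqref{equ: quotient action}.

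With these identifications the proposition is immediate. By \cite[Theorem 6.10]{abadie2021amenability}, $\mathcal{D}$ has the approximation property if and only if $\mathcal{D}''$ is W*-amenable; by definition, the latter means that the W*-enveloping action of the central partial action of $\mathcal{D}''$ is W*-amenable; and since $\sigma$ is already global, it coincides with its W*-enveloping action. Combining these equivalences with the identification $\sigma=\alpha''|_{Z(B'')}$ yields the claim.

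The main obstacle is the bookkeeping in the second paragraph: verifying that the formula $\sigma_{sN}(z)=\alpha_s''(z)$ is literally correct under the fibrewise identifications $\mathcal{D}''_{sN}\cong B''$, and that the twisting by $\tau_n$ hidden in the equivalence relation defining $\mathcal{D}$ really disappears on $Z(B'')$. This is conceptually transparent but needs to be checked carefully, because the Fell bundle product in $\mathcal{D}$ is twisted by both $\alpha$ and $\tau$, and the choice of coset representatives must not affect the outcome.
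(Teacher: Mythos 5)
Your proposal is correct and follows essentially the same route as the paper: invoke \cite[Theorem~6.10]{abadie2021amenability} to reduce the AP to W$^*$-amenability of the central partial action of $\mathcal{D}''$, use saturation to see this action is global on $Z(B'')$, and identify it with $\alpha''|_{Z(B'')}$ (checking independence of coset representatives via $\alpha_n=\Ad(\tau_n)$). The only cosmetic difference is that you compute the identification directly from the defining relation $\sigma_{sN}(z)m=mz$, while the paper phrases the same computation through the $B''$--$B''$ equivalence bimodule structure of $\mathcal{D}_{sN}''$ and \cite[Remark~4.5]{abadie2021amenability}.
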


\begin{proof}
By \cite[Thm.~6.10]{abadie2021amenability}, a Fell bundle has the AP if and only if 
the associated partial action on the centre of the bidual of its unit fibre is W$^*$-amenable. 
Since $\mathcal{D}$ is saturated and every fibre is isomorphic to $B$, the associated partial action 
reduces to a global action $\theta$ of $G/N$ on $Z(B'')$.

More precisely, fix $s\in G$ and view the fibre $\mathcal{D}_{sN}$ as a $B$--$B$ imprimitivity bimodule. 
The right $B$-action is the usual one, while the left action is given by $\alpha_s$. 
Passing to biduals, $\mathcal{D}_{sN}''$ becomes a $B''$--$B''$ equivalence bimodule with left action 
$b\cdot x=\alpha_s''(b)\,x$. 
By \cite[Remark~4.5]{abadie2021amenability}, such a bimodule determines an isomorphism 
$\pi_s\colon Z(B'')\to Z(B'')$ satisfying $xa=\pi_s(a)x$ for all $x\in \mathcal{D}_{sN}''$ and $a\in Z(B'')$. 
Since the left action is $\alpha_s''$, one checks that $\pi_s=\alpha_{s^{-1}}''|_{Z(B'')}$. 
Hence the automorphism implemented by the fibre $sN$ is 
$\theta_{sN}=(\pi_s)^{-1}=\alpha_s''|_{Z(B'')}$, giving the global action in 
\eqref{equ: quotient action}.
\end{proof}

The proof above illustrates the type of reasoning that emerges from connecting the theories of
amenability in the settings of C$^*$-actions, W$^*$-actions, Fell bundles and W$^*$-Fell bundles.
Here, Anantharaman-Delaroche's classical notion of amenability
\cite{ADaction1979,ADactionII1982,ADsystemes1987}
is extended to Green twisted actions through the unifying framework of Fell bundles and their W$^*$ counterparts.

These connections are not accidental.
In \cite{abadie2021amenability}, the definition of W$^*$-amenable W$^*$-Fell bundle
was introduced in terms of the amenability of the partial or global W$^*$-actions naturally associated to each bundle, 
and shown to be equivalent to the W$^*$-approximation property (W$^*$AP). 
A Fell bundle is said to be C$^*$-amenable if its bidual W*-Fell bundle is W$^*$-amenable; and the AP characterizes C$^*$-amenability 
\cite[Theorem~6.10]{abadie2021amenability}.
With these definitions, a W$^*$ (resp.\ C$^*$) action is amenable in the sense of 
Anantharaman-Delaroche if and only if its semidirect product bundle is W$^*$-
(resp.\ C$^*$-)amenable.

%\bibliography{FerraroBiblio}

\begin{thebibliography}{10}

\bibitem{Ab03}
Fernando Abadie.
\newblock Enveloping actions and {T}akai duality for partial actions.
\newblock {\em J. Funct. Anal.}, 197(1):14--67, 2003.

\bibitem{abadie2021amenability}
Fernando Abadie, Alcides Buss, and Dami\'{a}n Ferraro.
\newblock Amenability and approximation properties for partial actions and
  {F}ell bundles.
\newblock {\em Bull. Braz. Math. Soc. (N.S.)}, 53(1):173--227, 2022.

\bibitem{AbFrrEquivalence}
Fernando Abadie and Dami\'{a}n Ferraro.
\newblock Equivalence of {F}ell bundles over groups.
\newblock {\em J. Operator Theory}, 81(2):273--319, 2019.

\bibitem{ADaction1979}
C.~Anantharaman-Delaroche.
\newblock Action moyennable d'un groupe localement compact sur une alg\`ebre de
  von {N}eumann.
\newblock {\em Math. Scand.}, 45(2):289--304, 1979.

\bibitem{ADactionII1982}
C.~Anantharaman-Delaroche.
\newblock Action moyennable d'un groupe localement compact sur une alg\`ebre de
  von {N}eumann. {II}.
\newblock {\em Math. Scand.}, 50(2):251--268, 1982.

\bibitem{ADsystemes1987}
Claire Anantharaman-Delaroche.
\newblock Systemes dynamiques non commutatifs et moyennabilit{\'e}.
\newblock {\em Mathematische Annalen}, 279(2):297--315, 1987.

\bibitem{Baaj-Skandalis}
Saad Baaj and Georges Skandalis.
\newblock {$C^\ast$}-alg\`ebres de {H}opf et th\'eorie de {K}asparov
  \'equivariante.
\newblock {\em $K$-Theory}, 2(6):683--721, 1989.

\bibitem{bedos2024positiveMZ}
Erik B{\'e}dos and Roberto Conti.
\newblock Positive definiteness and fell bundles over discrete groups.
\newblock {\em Mathematische Zeitschrift}, 311, 2025.

\bibitem{BssEffMaximality}
Alcides Buss and Siegfried Echterhoff.
\newblock Maximality of dual coactions on sectional {$C^\ast$}-algebras of fell
  bundles and applications.
\newblock {\em Studia Math.}, 229(3):233--262, 2015.

\bibitem{BssEff_amenability}
Alcides Buss, Siegfried Echterhoff, and Rufus Willett.
\newblock Amenability and weak containment for actions of locally compact
  groups on {$C^*$}-algebras.
\newblock {\em Mem. Amer. Math. Soc.}, 301(1513):v+88, 2024.

\bibitem{BussFerraro25discrete}
Alcides Buss and Damian Ferraro.
\newblock Characterizations of amenability for noncommutative dynamical systems
  and fell bundles.
\newblock {\em arXiv preprint}, 2025.

\bibitem{Conway_CourseFA}
John~B. Conway.
\newblock {\em A course in functional analysis}, volume~96 of {\em Graduate
  Texts in Mathematics}.
\newblock Springer-Verlag, New York, 1985.

\bibitem{echterhoff2006categorical}
Siegfried Echterhoff, Steve Kaliszewski, John Quigg, and Iain Raeburn.
\newblock {\em A categorical approach to imprimitivity theorems for
  {$C^\ast$}-dynamical systems}.
\newblock American Mathematical Soc., 2006.

\bibitem{echterhoff1999induced}
Siegfried Echterhoff and John Quigg.
\newblock Induced coactions of discrete groups on {$C^\ast$}-algebras.
\newblock {\em Canadian Journal of Mathematics}, 51(4):745--770, 1999.

\bibitem{Exel1997Amenability}
Ruy Exel.
\newblock Amenability for {F}ell bundles.
\newblock {\em J. Reine Angew. Math.}, 492:41--73, 1997.

\bibitem{ExlibroAMS}
Ruy Exel.
\newblock {\em Partial dynamical systems, Fell bundles and applications},
  volume 224 of {\em Mathematical Surveys and Monographs}.
\newblock American Mathematical Soc., Providence, RI, 2017.

\bibitem{FlDr88}
J.~M.~G. Fell and R.~S. Doran.
\newblock {\em Representations of {$^*$}-algebras, locally compact groups, and
  {B}anach {$*$}-algebraic bundles.}, volume 125--126 of {\em Pure and Applied
  Mathematics}.
\newblock Academic Press Inc., Boston, MA, 1988.

\bibitem{Ferraro_2024}
Dami\'{a}n Ferraro.
\newblock Cross-sectional c*-algebras associated with subgroups.
\newblock {\em Canadian Journal of Mathematics}, page 1–26, 2024.

\bibitem{Vaes:Unitary}
Stefaan Vaes.
\newblock The unitary implementation of a locally compact quantum group action.
\newblock {\em J. Funct. Anal.}, 180(2):426--480, 2001.

\bibitem{Vaes:New-approach}
Stefaan Vaes.
\newblock A new approach to induction and imprimitivity results.
\newblock {\em J. Funct. Anal.}, 229(2):317--374, 2005.

\bibitem{Zl83}
Heinrich Zettl.
\newblock A characterization of ternary rings of operators.
\newblock {\em Advances in Mathematics}, 48(2):117--143, 1983.

\end{thebibliography}
\bibliographystyle{plain}

\end{document}